\newtheorem{theorem}{Theorem}
\newtheorem{lemma}[theorem]{Lemma}
\newtheorem{remark}[theorem]{Remark}
\newcommand{\cA}{{\mathcal A}}
\newcommand{\cB}{{\mathcal B}}
\newcommand{\cD}{{\mathcal D}}
\newcommand{\cE}{{\mathcal E}}
\newcommand{\cF}{{\mathcal F}}
\newcommand{\cG}{{\mathcal G}}
\newcommand{\cJ}{{\mathcal J}}
\newcommand{\cH}{{\mathcal H}}
\newcommand{\cI}{{\mathcal I}}
\newcommand{\cK}{{\mathcal K}}
\newcommand{\cM}{{\mathcal M}}
\newcommand{\cT}{{\mathcal T}}
\newcommand{\fX}{{\mathfrak X}}
\newcommand{\fS}{{\mathfrak S}}
\newcommand{\tr}{\tilde{r}}
\newcommand{\E}{{\mathbb E}}
\renewcommand{\Pr}{{\mathbb P}}
\newcommand\Var{\operatorname{Var}}
\newcommand\Bin{\operatorname{Bin}}
\newcommand\NN{\mathbb{N}}
\newcommand{\indic}[1]{\mathbbm{1}_{\{{#1}\}}}
\newcommand\floor[1]{\lfloor {#1} \rfloor}
\newcommand\ceil[1]{\lceil {#1} \rceil}
\renewcommand{\epsilon}{\varepsilon}
\newcommand{\eps}{\varepsilon}
\newcommand\noproof{\qed}
\begin{document}

\title{Upper tails for arithmetic progressions in random subsets} 
\author{Lutz Warnke%
\thanks{Department of Pure Mathematics and Mathematical Statistics, University of Cambridge, 
Wilberforce Road, Cambridge CB3 0WB, UK.
E-mail: {\tt L.Warnke@dpmms.cam.ac.uk}.}}
\date{July 10, 2013; revised March 14, 2016} 
\maketitle

\begin{abstract}
We study the upper tail of the number of arithmetic progressions of a given length in a random subset of $\{1, \ldots, n\}$, establishing exponential bounds which are best possible up to constant factors in the exponent. 
The proof also extends to Schur triples, and, more generally, to the number of edges in random induced subhypergraphs of `almost linear' $k$-uniform hypergraphs. 
\end{abstract}

\section{Introduction}\label{sec:intro}
What is the (typical) behaviour of a given function depending on many independent random variables~$\xi_j$? 
This fundamental concentration-of-measure question is of great interest in 
various areas of pure and applied mathematics, including functional analysis, 
statistical mechanics, and theoretical computer science. 
In applications, concentration inequalities are particularly important: these quantify 
random fluctuations of $X=f(\xi_1, \ldots,\xi_n)$ by bounding the probability that $X$ deviates significantly from its mean~$\E X$. 
During the last decades a wide variety of different methods for proving such inequalities have been developed  
(see, e.g.,~\cite{Ledoux,DubPan,BLM}),  
including martingale based methods~\cite{McDiarmid1989,KimVu2000}, Talagrand's methodology~\cite{Talagrand1995}, combinatorial approaches~\cite{DL}, 
 and information theoretic methods~\cite{Dembo1997,BLM2003}.

Despite this large body of work, in concrete applications  our understanding is often still far from satisfactory --  
even if we restrict our attention to the important case where~$X$ is a sum of (dependent) indicator variables and $\xi_j \in \{0,1\}$. 
For example, in probabilistic combinatorics the random variable~$X$ often counts objects, 
for instance the number of certain subgraphs in random graphs. 
Here Janson's and Suen's inequalities~\cite{Janson,JSuen,JW,RW} usually give sharp estimates for the lower tail $\Pr(X \le (1-\eps)\E X)$. 
In contrast, obtaining tight estimates for $\Pr(X \ge (1+\eps) \E X)$ 
is more delicate, and this `upper tail problem' is well-known to be a 
technical challenge (see, e.g.,~\cite{UT,UTAP}). 

In fact, in many such counting problems each indicator variable depends only on a few 
$\xi_j$, in which case~$X$ has a special structure: it is a low-degree polynomial of independent Bernoulli random variables.  
With this in mind, it is surprising that, 
despite intensive research of Kim and Vu~\cite{KimVu2000,Vu2002} and many others (see, e.g.,~\cite{DL,SSa,TBD,Ledoux,DubPan,BLM}), 
there is no concentration inequality that routinely gives the `correct' upper tail behaviour in these basic situations. 
Consequently the investigation of these and related problems is an important issue 
-- not only from an applications point of view, but also as a question in concentration-of-measure.

In this context, Janson, Oleszkiewicz and Ruci{\'n}ski~\cite{UTSG} developed in 2002 a moment-based method 
that, for subgraph counts in random graphs, gives estimates for $\Pr(X \ge (1+\eps) \E X)$ which are best possible up to logarithmic factors in the exponent. 
Subsequently, Janson and Ruci{\'n}ski~\cite{UTAP} extended this technique so that it also gives comparable 
estimates for arithmetic progressions in random subsets. 
To be more concrete, given $k \ge 3$, let $X$ be the number of arithmetic progressions of length $k$ in $[n]_p$, 
the random subset of $[n]=\{1, \ldots, n\}$ where each element is included independently with probability $p$.  
In~\cite{UTAP} it was shown that for essentially all $p$ and $\eps >0$ of interest we have 
\begin{equation}\label{eq:oldUT}
\exp\Bigl(-C(\eps,k) \sqrt{\E X} \log(1/p)\Bigr) \le \Pr(X \ge (1+\eps) \E X) \le \exp\Bigl(-c(\eps,k) \sqrt{\E X}\Bigr) ,
\end{equation}
determining, as in~\cite{UTSG}, the upper tail up to a factor of $O(\log(1/p))$ in the exponent for constant~$\eps$. 
The problem of closing this logarithmic gap in the approach of Janson et al.~\cite{UTSG,UTAP} has remained open for 
several years, and only very recently have there been some breakthroughs 
by Chatterjee~\cite{K3TailCh} and DeMarco and Kahn~\cite{K3TailDK,KkTailDK} 
for certain subgraph counts.

In this paper we solve the upper tail problem for a wide class of random variables, including arithmetic progressions and Schur triples, 
by establishing upper and lower bounds which match up to constant factors in the exponent. 
For simplicity, we first consider the special case of arithmetic progressions (in Section~\ref{sec:setup} we turn to the general results).  
In particular, \eqref{eq:thm:APp} below shows that 
$\log \Pr(X \ge (1+\eps)\E X) = -\Theta(\min\{\E X,\sqrt{\E X} \log (1/p)\})$ 
for constant~$\eps$, closing the $\log(1/p)$ gap that was present until now.
\begin{theorem}\label{thm:APp}
Given $k \ge 3$, let $X=X_{k,n,p}$ be the number of arithmetic progressions of length $k$ in $[n]_p$. 
Set $\mu=\E X$. 
There are $n_0,b,B > 0$ (depending only on $k$) such that for all $n \ge n_0$, $p \in (0,1]$ and $\eps > 0$ we have 
\begin{equation}\label{eq:thm:APp}
\indic{1 \le (1+\eps)\mu \le X_{k,n,1}}\exp\Bigl(-C(\eps)\Phi\Bigr) \le \Pr(X \ge (1+\eps)\mu) \le \exp\Bigl(-c(\eps)\Phi\Bigr),
\end{equation}
where $\Phi=\min\bigl\{\mu,\sqrt{\mu} \log (1/p)\bigr\}$, $c(\eps)=b \min\{\eps^3,\eps^{1/2}\}$ and $C(\eps)=B \max\{1,\eps^2\}$. 
\end{theorem}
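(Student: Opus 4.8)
The plan is to establish the two bounds in~\eqref{eq:thm:APp} separately, with the upper bound being the substantive part. For the lower bound, I would condition on a small ``seed'' of included elements that forces many extra arithmetic progressions. Concretely, when $\sqrt{\mu}\log(1/p) \le \mu$ (equivalently $p$ not too large), I would plant a block of roughly $\Theta(\sqrt{\mu})$ consecutive integers (or an appropriately spaced arithmetic-progression-friendly set) being present; the probability of this is $\exp(-\Theta(\sqrt{\mu}\log(1/p)))$, and on this event the number of $k$-APs increases by a constant factor, giving $X \ge (1+\eps)\mu$ provided the constants are tuned via $C(\eps) = B\max\{1,\eps^2\}$. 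In the complementary regime $\mu \le \sqrt{\mu}\log(1/p)$ one simply asks that $\Theta(\eps)$ fraction of all APs be realized, which costs $\exp(-\Theta(\mu))$; here the indicator $\indic{1 \le (1+\eps)\mu \le X_{k,n,1}}$ guarantees there is room to do this. Both sub-cases are routine second-moment / FKG-type arguments once the right planted structure is identified.

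For the upper bound I would use the moment method in the spirit of Janson--Oleszkiewicz--Ruci\'nski and Janson--Ruci\'nski, but pushed to remove the logarithmic loss. The key object is a careful analysis of $\E[X^t]$ (or of $\E e^{\lambda X}$) where $t$ is chosen near the natural scale. Writing $X = \sum_{\alpha} I_\alpha$ over APs $\alpha$, one expands $X^t$ into a sum over $t$-tuples of APs and groups terms by the ``core'' hypergraph $H$ they span; the dominant contribution is governed by how many vertices (elements of $[n]$) must be present, which is where the distinction between $\mu$ and $\sqrt{\mu}\log(1/p)$ enters through a $p^{|V(H)|}$ factor weighed against the number of ways to embed. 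The crucial combinatorial input is a bound on the number of APs through a fixed element and, more generally, a ``clustering'' estimate: if a set $W$ of elements of $[n]$ is present, the number of APs entirely inside $W$ is controlled in terms of $|W|$ — this is exactly the ``almost linear'' hypergraph structure alluded to in the abstract, and it is what makes APs tractable (each new AP uses at least two ``fresh'' elements given one fixed pair, so the relevant hypergraph has bounded codegree-type behaviour). I would isolate this as a lemma: the number of $k$-APs with all elements in an $m$-subset of $[n]$ is $O(m^2)$ (and more refined versions counting APs meeting a fixed set).

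With that lemma in hand, the heart of the upper-tail argument is the optimization: one shows $\Pr(X \ge (1+\eps)\mu) \le \min_t (\E X^t)/((1+\eps)\mu)^t$, bounds $\E X^t$ by a sum over the number $j$ of ``clustered'' APs versus the number of disjoint ones, and optimizes the choice of $t$. The disjoint APs contribute a factor like $(\mu/t)^{t}$-type terms that Markov/Chernoff handles, while the clustered part contributes $\exp(O(j\log(1/p)))$-type costs from forcing $\Theta(\sqrt{j})$ elements to be present (using the $O(m^2)$ lemma with $m = \Theta(\sqrt j)$). Balancing these yields the exponent $-c(\eps)\min\{\mu, \sqrt{\mu}\log(1/p)\}$ after choosing $j = \Theta(\eps\mu)$ and tracking the $\eps$-dependence to get $c(\eps) = b\min\{\eps^3, \eps^{1/2}\}$ (the $\eps^3$ coming from the small-$\eps$ Chernoff regime and $\eps^{1/2}$ from the large-deviation regime).

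\textbf{Main obstacle.} The hard part will be the bookkeeping in the expansion of $\E X^t$: one must show that the contribution of configurations with ``partial overlaps'' (APs sharing one or a few elements, but not nested cleanly) does not dominate, which requires the refined clustering lemma counting APs that meet a prescribed set in a prescribed pattern, together with a convexity argument that the worst case is the fully-clustered-versus-fully-disjoint dichotomy. Getting the $\eps$-dependence sharp (rather than just some positive constant for fixed $\eps$) adds a further layer of care, since it forces an honest optimization over $t$ rather than a single convenient choice. Everything else — the lower bound, the reduction to moments, the final optimization — I expect to be comparatively routine once the clustering estimates are in place.
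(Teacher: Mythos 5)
Your plan for the upper bound is essentially the moment method of Janson--Oleszkiewicz--Ruci\'nski~\cite{UTSG} and Janson--Ruci\'nski~\cite{UTAP}, and this is precisely the approach that the paper identifies as having left the $\log(1/p)$ gap open for several years: bounding $\Pr(X \ge (1+\eps)\mu)$ by $\E X^t/((1+\eps)\mu)^t$ and optimizing over $t$ yields only $\exp(-c(\eps,k)\sqrt{\mu})$, as in~\eqref{eq:oldUT}. The underlying difficulty is structural, not a matter of ``careful bookkeeping'': the clustered configurations (many APs in a set $W$ of size $O(\sqrt{j})$) contribute \emph{positively} to $\E X^t$, so they make the Markov bound \emph{weaker}, and no convexity argument or sharper clustering lemma fixes this when optimizing a plain polynomial or exponential moment of $X$. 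You gesture at ``forcing $\Theta(\sqrt{j})$ elements to be present'' costing $\exp(O(j\log(1/p)))$, but that is a lower bound on the tail, not something that shows up with the correct sign in the moment expansion; you would need to say concretely how the $\log(1/p)$ enters with a \emph{minus} sign in the upper bound, and the proposal does not do that.

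The paper's route is different in kind. Rather than moments of $X$ itself, it decomposes $X = e(\cH_p)$ via a sparsification step (Lemma~\ref{lem:approx:basic}): remove edges incident to $r$-star matchings to reduce the maximum degree of $\cH_p$ to at most $r$, giving $X \le X_r + k\ceil{r}M_r(\cH_p)\Delta_1(\cH_p)$, where $X_r$ is the largest induced subhypergraph of bounded degree and $M_r$ is the size of the largest vertex-disjoint collection of $r$-stars. The two auxiliary variables are then controlled by new concentration inequalities: Theorem~\ref{thm:C}, a Chernoff-type bound based on a restricted (``independence-forced'') factorial moment, which gives $\Pr(X_r \ge \mu + t/2) \le \exp(-\Theta(t^2/(\mu r)))$ with the exponent scaling as $1/r$ rather than $1/r^2$; and Theorem~\ref{thm:CD}, based on Reimer's BK-inequality, which controls $M_r(\cH_p)$ by observing that vertex-disjoint $r$-stars give \emph{disjointly occurring} high-degree events. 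The $\log(1/p)$ factor then emerges from the ``overshooting'' phenomenon for degrees: once $r \gg np^{k-1}$, one has $\Pr(|\Gamma_v(\cH_p)| \ge \ceil{x}) \le (O(np^{k-1})/x)^{\Theta(x)} = \exp(-\Theta(x\log(1/p)))$ because the codegree bound $\Delta_2(\cH) = O(1)$ forces the star to use $\Omega(x)$ distinct vertices, and the natural choice $r = \Theta(\max\{1,\sqrt{\mu}/\log(1/p)\})$ makes this kick in. None of these ingredients appear in your sketch. Your lower-bound plan is closer to the mark (the clustering construction is Theorem~\ref{thm:LB:constrp}, and the small-$p$ regime is Theorem~\ref{thm:LB:disj}), though calling the latter ``routine second-moment/FKG'' understates it: the paper needs a careful combinatorial count of ``good'' edge collections and a Stirling-type estimate for the binomial lower bound, together with the extra $\lambda t$-planting trick to handle $p$ merely bounded away from $1$.
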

Note that $\mu=\E X = \Theta(n^2p^k)$, and that $p$ and $\eps$ may depend on $n$ (we do not assume $n \ge n_0(\eps)$, $\eps=\Theta(1)$ or $p \ge n^{-2/k}$, which are common in this context). 
The additional condition $(1+\eps)\mu \le X_{k,n,1}$ assumed for the lower bound is necessary (and also implies $p \le (1+\eps)^{-1/k} < 1$); otherwise $X\ge (1+\eps)\mu$ is impossible. The condition $(1+\eps)\mu \ge 1$, which holds automatically under common assumptions such as $\mu=\omega(1)$ or $\mu \ge 1$, is natural; otherwise $\Pr(X \ge (1+\eps) \mu)=\Pr(X \ge 1)$. 
The form of the exponent in \eqref{eq:thm:APp} can be motivated as follows. 
Since an interval $[m]=\{1, \ldots, m\}$ contains $\Theta(m^2)$ arithmetic progressions of length $k$, for suitable $m=\Theta(\sqrt{\mu})$ we have $\Pr(X \ge 2\mu) \ge \Pr([m] \subseteq [n]_p) = p^{\Theta(\sqrt{\mu})}=e^{-\Theta(\sqrt{\mu}\log(1/p))}$. 
Moreover, for small $p$ (say, $p = n^{-2/k}$) we expect that $X$ is approximately Poisson, which suggests $\Pr(X \ge 2\mu) \approx e^{-\Theta(\mu)}$. Theorem~\ref{thm:APp} essentially states that the larger of these bounds determines the decay of the upper tail for constant $\eps$.

A weakness of Theorem~\ref{thm:APp} is that is does not guarantee a similar dependence of $c(\eps)$ and $C(\eps)$ on $\eps$. 
Although results of this form (see, e.g.,~\cite{K3TailCh,KkTailDK,K3TailDK,UTSG}) are the widely accepted standard  for the `infamous' upper tail problem~\cite{UT}, here we go much further. 
Our next result establishes, over a wide range of the parameters, the dependence of the upper tail on $\eps$, up to constants (that are independent of~$\eps$). 
In the language of large deviations, \eqref{eq:thm:AP} below determines, for $p$ bounded away from one, the order of magnitude of the \emph{large deviation rate function} $\log \Pr(X \ge (1+\eps)\E X)$ for all $\eps \ge n^{-\alpha}$ of interest. 
\begin{theorem}\label{thm:AP}
Given $k \ge 3$, let $X=X_{k,n,p}$ be the number of arithmetic progressions of length $k$ in $[n]_p$. 
Set $\mu=\E X$ and $\varphi(x)=(1+x)\log(1+x)-x$. 
Given $\gamma \in (0,1)$, there are $n_0,\alpha > 1/(6k)$ (depending only on $k$) and $c, C>0$ (depending only on $\gamma,k$) such that for all $n\ge n_0$, $p\in (0,1-\gamma]$ and $\eps \ge n^{-\alpha}$ satisfying $\Phi(\eps) \ge 1$ we have 
\begin{equation}\label{eq:thm:AP}
\indic{1 \le (1+\eps)\mu \le X_{k,n,1}}\exp\Bigl(- C\Phi(\eps)\Bigr) \le  \Pr(X \ge (1+\eps)\mu)  \le \exp\Bigl(- c\Phi(\eps)\Bigr) ,
\end{equation}
where $\Phi(\eps)=\min\bigl\{\varphi(\eps)\mu^2/\Var X ,\sqrt{\eps\mu} \log (1/p)\bigr\}$. 
\end{theorem}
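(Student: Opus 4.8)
The plan is to decouple the statement into a lower bound and an upper bound, and in each case to identify which of the two terms in $\Phi(\eps)=\min\{\varphi(\eps)\mu^2/\Var X,\sqrt{\eps\mu}\log(1/p)\}$ is active. I would first record the elementary fact that $\Var X = \Theta(\E X + \Lambda)$, where $\Lambda$ is the standard ``overlap'' parameter counting ordered pairs of arithmetic progressions sharing at least two elements (up to $k$-dependent constants $\Lambda=\Theta(n\mu p + \mu)$ roughly), so that $\mu^2/\Var X$ is comparable to the usual Janson-type quantity $\min\{\mu,\mu/(np)\}$ up to constants depending on $k$; this lets me translate between $\varphi(\eps)\mu^2/\Var X$ and the explicit expressions in terms of $n,p$. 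For the \emph{lower bound}: if $\sqrt{\eps\mu}\log(1/p)$ is the minimum, I would use the planted-interval strategy sketched after Theorem~\ref{thm:APp} — force an interval $[m]\subseteq[n]_p$ of length $m=\Theta(\sqrt{\eps\mu})$, which creates $\Theta(m^2)=\Theta(\eps\mu)$ extra progressions at probability $p^m=e^{-\Theta(\sqrt{\eps\mu}\log(1/p))}$, and condition on the rest of $[n]_p$ behaving typically (here the indicator $\indic{(1+\eps)\mu\le X_{k,n,1}}$ guarantees there is room to plant the interval). If instead $\varphi(\eps)\mu^2/\Var X$ is the minimum, I would plant a small ``clump'' — a random sub-collection of $\Theta(\varphi(\eps)\mu^2/\Var X)$ vertices chosen so as to maximize the number of progressions per planted vertex — and again condition; the $\varphi$ (rather than linear) dependence on $\eps$ comes out because for the regime where $X$ is close to Poisson the natural event has probability governed by the Poisson large-deviation rate $\varphi(\eps)$.

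For the \emph{upper bound}, which I expect to be the substantive part, I would use the moment method of Janson--Oleszkiewicz--Ruci\'nski and Janson--Ruci\'nski~\cite{UTSG,UTAP}, but pushing it harder to remove the $\log(1/p)$ loss in~\eqref{eq:oldUT}. The idea is to bound $\Pr(X\ge(1+\eps)\mu)\le \E Z_\lambda / e^{\lambda(1+\eps)\mu}$ where $Z_\lambda=\sum_{\alpha}\lambda^{|\text{progression }\alpha|}\prod_{j\in\alpha}\xi_j$-type exponential moments are controlled via the ``disjoint family'' / clique-counting combinatorics on the hypergraph of progressions, and then to optimize $\lambda$. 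The crucial improvement — and the main obstacle — is showing that the dominant contribution to these exponential moments comes from \emph{tightly clustered} families of progressions (essentially intervals), so that the combinatorial count of such families is small enough that one does not lose the extra logarithmic factor: this is where one needs a sharp upper bound on the number of ways a given number of progressions can be packed into few integers, matching the interval construction. Concretely, I would isolate a ``core'' random set $I$ of size $\Theta(\Phi(\eps))$ carrying most of the excess progressions, bound the number of choices of $I$ and the conditional probability $\Pr(I\subseteq[n]_p)=p^{|I|}$, and balance these against $\E X$ to get the two competing terms. The range restriction $\eps\ge n^{-\alpha}$ and $p\le 1-\gamma$ enters precisely here: it ensures $\log(1/p)=\Theta_\gamma(1-p)=\Theta_\gamma(1)$ is bounded below and that the excess $\eps\mu$ is polynomially large, so that the clustering/counting estimates are not swamped by lower-order terms. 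Routine but necessary bookkeeping — verifying $\varphi(\eps)\asymp\eps^2$ for bounded $\eps$ and $\varphi(\eps)\asymp\eps\log\eps$ for large $\eps$, and checking the two constructions/bounds actually meet at the transition $\varphi(\eps)\mu^2/\Var X \asymp \sqrt{\eps\mu}\log(1/p)$ — I would defer to the general hypergraph framework of Section~\ref{sec:setup} rather than redo it for progressions alone.
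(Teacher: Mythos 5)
Your proposal does not reproduce the paper's actual proof of Theorem~\ref{thm:AP}. In the paper this theorem is \emph{not} reproved from scratch; it is a short deduction from the general hypergraph Theorem~\ref{thm:HG} together with Remarks~\ref{rem:HGp}, \ref{rem:Var} and~\ref{rem:HG}, via the elementary observation that any $t\ge \mu n^{-1/(5k+1)}$ (hence any $\eps\ge n^{-\alpha}$ with $\alpha\le 1/(5k+1)$, so $\alpha>1/(6k)$) satisfies the case conditions~\eqref{eq:thm:t}. You gesture at ``deferring to the general hypergraph framework'' only at the very end, but most of your sketch is an independent argument, and two central pieces of it do not work as described.

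The most concrete gap is in your sub-Gaussian lower bound. You propose to obtain the $\varphi(\eps)\mu^2/\Var X$ rate by planting a ``clump'' $W$ of $\Theta(\varphi(\eps)\mu^2/\Var X)$ vertices. But planting a deterministic vertex set $W$ always gives $\Pr(W\subseteq[n]_p)=p^{|W|}$, i.e.\ an exponent $|W|\log(1/p)$; with your choice of $|W|$ this is $\varphi(\eps)\mu^2\log(1/p)/\Var X$, which is larger than the claimed $\varphi(\eps)\mu^2/\Var X$ by a factor $\log(1/p)$, precisely the factor you are trying to avoid. The appeal to the ``Poisson large-deviation rate'' is not a mechanism; it is the target. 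The paper instead uses two genuinely different devices: for small $p$ (Theorem~\ref{thm:LB:disj}) it computes $\Pr(X=m)$ by counting $m$-edge ``good'' subfamilies and comparing to a binomial, and for larger $p$ (Theorem~\ref{thm:LB:vx}) it conditions on the sub-Gaussian event $\{|V_p(\cH)|\ge (1+\delta)v(\cH)p\}$ and then applies a Paley--Zygmund argument to the conditional count. Neither is a planted-set construction.

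The upper bound is also not what the paper does. You suggest ``pushing the moment method of~\cite{UTSG,UTAP} harder'' and controlling an exponential moment $\E Z_\lambda$; the paper explicitly notes that this line left a $\log(1/p)$ gap that remained open, and its resolution relies on a different mechanism: sparsify $\cH_p$ by removing high-degree vertices (Lemmas~\ref{lem:approx:basic} and~\ref{lem:approx}), bound the number of removed edges via a BK-inequality argument on disjointly-occurring $r$-stars (Lemmas~\ref{lem:Mr}--\ref{lem:MrH} and Theorem~\ref{thm:CD}), and control the sparse remainder $X_r$ via the factorial-moment Chernoff-type inequality Theorem~\ref{thm:C} (Lemma~\ref{lem:Xr}). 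Your ``isolate a core set $I$ of size $\Theta(\Phi(\eps))$'' idea is closer in spirit to the DeMarco--Kahn subgraph strategy than to this paper's framework, and in the form sketched it is not clear it would produce the sharp $\varphi(\eps)\mu^2/\Var X$ branch at all. If your intent was genuinely to invoke Theorem~\ref{thm:HG}, then the proof should simply be the verification that the assumptions $p\le 1-\gamma$, $\eps\ge n^{-\alpha}$ force one of (a)--(c) and one of (i)--(iii) in Theorem~\ref{thm:HG}, and that $\fX(\cH_n,D,\cdot)$ holds by Remark~\ref{rem:HGp}; none of that appears in your sketch.
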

It is not hard to check that $\Var X=\Theta(\mu(1+np^{k-1}))$ for $p$ bounded away from one (see, e.g., Example~3.2 and Lemma~3.5 in~\cite{JLR}). 
Note that the condition $\Phi(\eps) \ge 1$ is natural since our focus is on exponentially small probabilities. 
The function $\varphi(x)$  appears in standard Chernoff bounds; it satisfies $\varphi(x)=\Theta(x \log(1+x))$ for $x \ge 0$, so that $\varphi(x)=\Theta(x^2)$ as $x \to 0$. 
The proof of Theorem~\ref{thm:AP} shows that the form of the exponent in \eqref{eq:thm:AP} is determined by Normal approximation considerations (the $\varphi(\eps)\mu^2/\Var X$ term) and the interval clustering idea (the $\sqrt{\eps\mu} \log (1/p)$ term). 
The sharp estimates of Theorem~\ref{thm:AP} are conceptually quite different from previous work on the upper tail problem. 
Indeed, somewhat related work for subgraph counts in the binomial random graph $G_{n,p}$ (which aims to determine the precise constants in the exponent as $n \to \infty$, see, e.g.,~\cite{CD2010,CV2011,LZ2012}) focuses on the case where $\eps$ is constant and $p$ is large (with $p = \Theta(1)$ or $p \ge n^{-\delta}$). 
In fact, for moderately large $p$, our next result \emph{completely} resolves the qualitative behaviour of the upper tail. 
\begin{theorem}\label{thm:APt}
Given $k \ge 3$, let $X=X_{k,n,p}$ be the number of arithmetic progressions of length $k$ in $[n]_p$. 
Set $\mu=\E X$. 
Given $\gamma \in (0,1)$, there are $n_0>0$ (depending only on $k$) and $c, C>0$ (depending only on $\gamma,k$) such that for all $n\ge n_0$, $(\log n)^{1/(k-1)}n^{-1/(k-1)} \le p \le 1-\gamma$ and $t \ge \sqrt{\Var X}$ we have 
\begin{equation}\label{eq:thm:APt}
\indic{\mu + t \le X_{k,n,1}}\exp\Bigl(- C\Psi(t)\Bigr) \le  \Pr(X \ge \mu + t)  \le \exp\Bigl(- c\Psi(t)\Bigr) ,
\end{equation}
where $\Psi(t)=\min\bigl\{t^2/\Var X ,\sqrt{t} \log (1/p)\bigr\}$. 
\end{theorem}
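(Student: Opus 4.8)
The plan is to derive Theorem~\ref{thm:APt} as a corollary of Theorem~\ref{thm:AP} by a change of variables $t = \eps\mu$, after checking that the hypotheses line up. First I would set $\eps = t/\mu$, so that $\mu + t = (1+\eps)\mu$ and the indicator conditions match verbatim. The quantitative content then reduces to comparing $\Psi(t) = \min\{t^2/\Var X, \sqrt{t}\log(1/p)\}$ with $\Phi(\eps) = \min\{\varphi(\eps)\mu^2/\Var X, \sqrt{\eps\mu}\log(1/p)\}$. The second terms agree exactly since $\sqrt{\eps\mu} = \sqrt{t}$. For the first terms, I would use the two-sided bound $\varphi(x) = \Theta(x\log(1+x))$ stated after Theorem~\ref{thm:AP}, together with the hypothesis $t \ge \sqrt{\Var X}$, to show $\varphi(\eps)\mu^2/\Var X = \Theta(t^2/\Var X)$ up to constants depending only on $\gamma, k$; the key point is that $t \ge \sqrt{\Var X}$ and $\Var X = \Theta(\mu(1+np^{k-1}))$ force $\eps$ to be bounded below, which controls the $\log(1+\eps)$ factor from below, while from above $\varphi(\eps) \le \eps \log(1+\eps)$ and a crude bound $\eps \le X_{k,n,1}/\mu - 1 = O(p^{-k})$ keep it comparable to $\eps^2$ when $\eps$ is small. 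Hence $\Psi(t) = \Theta(\Phi(\eps))$, and absorbing the implied constants into $c$ and $C$ gives \eqref{eq:thm:APt} from \eqref{eq:thm:AP}.

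The remaining task is to verify the two nontrivial hypotheses of Theorem~\ref{thm:AP}: that $\eps = t/\mu \ge n^{-\alpha}$ and that $\Phi(\eps) \ge 1$. For the former, the hypothesis $t \ge \sqrt{\Var X} = \Omega(\sqrt{\mu})$ combined with $\mu = \Theta(n^2 p^k)$ gives $\eps = t/\mu = \Omega(\mu^{-1/2}) = \Omega(n^{-1} p^{-k/2})$; since the range of $p$ in Theorem~\ref{thm:APt} has $p \ge (\log n)^{1/(k-1)} n^{-1/(k-1)}$, we get $p^{-k/2} \le n^{k/(2(k-1))} (\log n)^{-k/(2(k-1))}$, so $\eps = \Omega(n^{-1 + k/(2(k-1))} (\log n)^{-k/(2(k-1))}) = \Omega(n^{-(k-2)/(2(k-1))}/\mathrm{polylog})$, which is at least $n^{-\alpha}$ for suitable $\alpha > 1/(6k)$ once $n$ is large (here I would just check $(k-2)/(2(k-1)) < \alpha$ is satisfiable simultaneously with the lower bound $\alpha > 1/(6k)$ that Theorem~\ref{thm:AP} provides, noting these are compatible for all $k \ge 3$, possibly after shrinking $\alpha$ — one must be slightly careful that Theorem~\ref{thm:AP} \emph{gives} an $\alpha$ rather than letting us choose it, so the actual argument is to check the $\eps$ coming from $t \ge \sqrt{\Var X}$ exceeds \emph{that} $\alpha$; the $p$-range in Theorem~\ref{thm:APt} is presumably chosen exactly to make this work, with the polylog factor ensuring a clean inequality). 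For $\Phi(\eps) \ge 1$: since $\eps\mu = t$ and the $p$-range keeps $\log(1/p) = \Omega(\log\log n)$ bounded away from $0$, and $t \ge \sqrt{\Var X} \ge \sqrt{\mu} \to \infty$, the term $\sqrt{\eps\mu}\log(1/p) = \sqrt{t}\,\Omega(1) \to \infty$; while $\varphi(\eps)\mu^2/\Var X = \Theta(t^2/\Var X) \ge \Theta(1)$ by $t \ge \sqrt{\Var X}$. Taking $n_0$ large enough makes $\Phi(\eps) \ge 1$.

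The main obstacle I anticipate is not conceptual but bookkeeping: making sure the $p$-range $(\log n)^{1/(k-1)} n^{-1/(k-1)} \le p \le 1-\gamma$ precisely matches what is needed for (i) $\eps = t/\mu \ge \alpha$-threshold with the specific $\alpha$ exported by Theorem~\ref{thm:AP}, and (ii) $\log(1/p)$ staying large enough (and the trivial $p \le 1-\gamma$ feeding directly into the $p \le 1-\gamma$ hypothesis of Theorem~\ref{thm:AP}). The lower bound on $p$ is the delicate one: too small a $p$ makes $\mu$ small, hence forces $\eps$ large just to have $t \ge \sqrt{\Var X}$, which is fine, but it also risks $\eps$ \emph{not} being of the form $\ge n^{-\alpha}$ from below — actually the tension is the reverse, small $p$ makes $\mu$ small and $\eps$ \emph{large}, so the constraint $\eps \ge n^{-\alpha}$ is easy; the real role of the lower bound on $p$ is to keep $\Var X = \Theta(\mu(1+np^{k-1}))$ in the regime where $np^{k-1} = \Omega(\log n) \ge 1$, i.e.\ the ``clustering-dominated'' regime, which is what makes $\Psi(t)$ and the genuine large-deviation rate coincide; I would double check that this is exactly where the $n^{-1/(k-1)}(\log n)^{1/(k-1)}$ threshold comes from. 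Once these matchups are confirmed, the proof is a short deduction.
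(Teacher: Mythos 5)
Your reduction from Theorem~\ref{thm:AP} to Theorem~\ref{thm:APt} has a genuine gap at the step you flag as needing double-checking: the constraint $\eps \ge n^{-\alpha}$ from Theorem~\ref{thm:AP} is \emph{not} satisfied for the smallest $t$ permitted by Theorem~\ref{thm:APt}. Concretely, the $\alpha$ that the paper's proof of Theorem~\ref{thm:AP} exports is governed by the ``middle'' regime in \eqref{eq:thm:t}; the text gives $\alpha=1/(5k+1)$ (and the binding constraint $t\ge\mu^{2/3}(\log n)^{4/3}$ for $p$ near $n^{-2/(k+1/3)}$ shows $\alpha$ cannot exceed roughly $2/(9k+3)$). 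On the other hand, in the range $p\ge(\log n/n)^{1/(k-1)}$ one has $np^{k-1}\ge\log n$, so $\Var X = \Theta(\mu\, np^{k-1})$ and the hypothesis $t\ge\sqrt{\Var X}$ only forces
\[
\eps = \frac{t}{\mu} \;\ge\; \frac{\sqrt{\Var X}}{\mu} \;=\; \Theta\Bigl(\frac{1}{\sqrt{np}}\Bigr) \;=\; \Theta\bigl(n^{-(k-2)/(2(k-1))}(\log n)^{-1/(2(k-1))}\bigr).
\]
Since $(k-2)/(2(k-1)) > 2/(9k+3) > 1/(5k+1)$ for every $k\ge 3$ (for $k=3$ the comparison is $1/4$ versus $1/15$ versus $1/16$), the lower bound on $\eps$ you actually get is much \emph{smaller} than $n^{-\alpha}$. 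So for a substantial range $\sqrt{\Var X}\le t\lesssim\mu n^{-\alpha}$, where $\Psi(t)$ is already polynomially large and the claimed bound is not trivial, Theorem~\ref{thm:AP} simply does not apply. The lower bound on $p$ in Theorem~\ref{thm:APt} does \emph{not} rescue this, since the computation above already uses that lower bound; loosening your estimate $\Var X = \Omega(\mu)$ to the sharper $\Var X = \Theta(\mu np^{k-1})$ makes the mismatch worse, not better.

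The paper avoids this entirely by going straight to Theorem~\ref{thm:HG}: case~(a) of that theorem (which needs no constraint on $t$ at all once $p\ge\gamma n^{-1/(k-1)}(\log n)^{1/(k-1)}$) gives the upper bound, and case~(iii) (which needs only $t\ge\min\{\sqrt{\Var X},\sqrt{\Lambda}\}$ and $\gamma n^{-1/(k-1)}\le p\le 1-\gamma$, precisely the hypotheses of Theorem~\ref{thm:APt}) gives the lower bound; Remarks~\ref{rem:HGp}, \ref{rem:Var} and~\ref{rem:HG} then convert the exponent $\varphi(t/\mu)\mu^2/\Lambda$ into $t^2/\Var X$ and reconcile $\log(e/p)$ with $\log(1/p)$. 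In other words, Theorem~\ref{thm:AP} is itself a weakened corollary of Theorem~\ref{thm:HG} whose uniform-in-$p$ constraint $\eps\ge n^{-\alpha}$ is forced by $p$-values outside the range of Theorem~\ref{thm:APt}; you should skip that intermediary and invoke Theorem~\ref{thm:HG} directly. The part of your argument comparing $\Phi(\eps)$ with $\Psi(t)$ (reducing $\varphi(\eps)\mu^2/\Var X$ to $t^2/\Var X$) is essentially what Remark~\ref{rem:HG} records and can be retained, although there too you should be more careful: $\varphi(\eps)=\Theta(\eps^2)$ fails for $\eps\gg 1$, and one has to check (as Remark~\ref{rem:HG} effectively does, via the $p\ge\gamma n^{-2/k}(\log n)^{2/k}$ condition which is implied by your $p$-range for $k\ge 3$) that whenever the $\varphi$-term is the relevant one, $\eps$ is in fact bounded.
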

Finally, as the reader can guess, in Theorem~\ref{thm:AP} and~\ref{thm:APt} various conditions (for $\eps$ and $p$) are not best possible. 
However, for ease of exposition we defer more precise results to the next section, where we state our more general tail estimates 
(which include Theorems~\ref{thm:APp}--\ref{thm:APt} as special cases or corollaries). 
Here we just mention that there is a tradeoff between $p$ and $t=\eps \mu$ in Theorem~\ref{thm:AP} and~\ref{thm:APt}. 
Indeed, Theorem~\ref{thm:AP} works for \emph{all} $0 < p \le 1-\gamma$, but~\eqref{eq:thm:AP} is restricted to deviations of form~$\eps \ge n^{-\alpha}$ (for some fixed $\alpha >0$). 
By contrast, Theorem~\ref{thm:APt} requires $n^{-1/(k-1)+o(1)} \le p \le 1-\gamma$, but~\eqref{eq:thm:APt} applies to essentially \emph{all} exponentially small deviations~$t>0$ (note that $\Psi(t) \le 1$ for $t \le \sqrt{\Var X}$).

\subsection{Counting edges of random induced subhypergraphs}\label{sec:setup} 
In this section we present the main results of this paper, Theorem~\ref{thm:HGp} and~\ref{thm:HG}, 
which resolve the upper tail problem (up to constant factors in the exponent) for a large class of 
random variables, including arithmetic progressions and Schur triples. 
We shall phrase our results in the language of random induced subhypergraphs. 
More precisely, given a $k$-uniform hypergraph $\cH$ with vertex set $V(\cH)$, let $V_p(\cH)$ be the random subset of~$V(\cH)$ where each vertex is included independently with probability $p$. 
Define $\cH_p=\cH[V_p(\cH)]$ and 
\begin{equation*}\label{X}
X= e(\cH_p) ,
\end{equation*}
so that $X$ counts the number of edges induced by $V_p(\cH)$. 
Note that $\E X = e(\cH)p^k$. 
Random variables of this form occur frequently in probabilistic combinatorics (see, e.g,~\cite{RR1994,UT,Schacht2009,FRS2010,WG2012,W2014}),  
and, in the setting of Theorems~\ref{thm:APp}--\ref{thm:APt}, the edges of $\cH=\cH_n$ are all $k$-subsets $\{x_1, \ldots, x_k\} \subseteq [n]=V(\cH)$ forming an arithmetic progression of length~$k$. 
To state our results, we define 
\[
\Delta_j(\cH) = \max_{S \subseteq V(\cH): |S|=j} |\{f \in \cH: \: S \subseteq f\}| , 
\]
which for $j \in \{1,2\}$ corresponds to the maximum degree and codegree of~$\cH$, respectively. 
The main examples of~\cite{UTAP} concern $k$-uniform hypergraphs $\cH=\cH_n$ with $v(\cH)=n$ vertices and $e(\cH)=\Theta(n^2)$ edges that are \emph{almost linear}, i.e., with $\Delta_2(\cH) = O(1)$, and satisfy property $\fX(\cH,D,(1+\eps)\mu)$ with $D=\Theta(1)$, where 
\begin{equation}\label{P}
\text{$\fX(\cH,D,x)$: there exists $W \subseteq V(\cH)$ with $|W| \le D \max\{\sqrt{x},1\}$ and $e(\cH[W]) \ge x$.} 
\end{equation}
Note that $\cH=\cH_n$ encoding $k$-term arithmetic progressions in $[n]$ is of this form (see also Remark~\ref{rem:HGp} below). 
Under the aforementioned conditions, 
Janson and Ruci{\'n}ski~\cite{UTAP} proved that the upper tail of $X=e(\cH_p)$ is of type~\eqref{eq:oldUT}, leaving a $\log(1/p)$ gap between the upper and lower bounds for constant $\eps$ (see Theorem~2.1 in~\cite{UTAP} with~$q=2$). 
The following theorem rectifies this issue, by closing the gap. 
\begin{theorem}\label{thm:HGp}
Given $k \ge 3$, $a>0$ and $D \ge 1$, suppose that $\cH=\cH_n$ is a $k$-uniform hypergraph satisfying $v(\cH) \le D n$, $e(\cH) \ge an^2$ and $\Delta_{2}(\cH) \le D$. 
Let $X=e(\cH_p)$ and $\mu=\E X$. 
There are  $n_0,b,B > 0$ (depending only on $k,a,D$) such that for all $n \ge n_0$, $p \in (0,1]$ and $\eps > 0$ we have, with $c(\eps)=b \min\{\eps^3,\eps^{1/2}\}$, 
\begin{equation}\label{eq:thm:HGp:UB}
\Pr(X \ge (1+\eps)\mu) \le \exp\Bigl(-c(\eps)\min\bigl\{\mu, \: \sqrt{\mu}\log(e/p)\bigr\}\Bigr).
\end{equation}
If, in addition, $\fX(\cH,D,(1+\eps)\mu)$ and $(1+\eps)\mu \ge 1$ hold, then we have, with $C(\eps)=B \max\{1,\eps^2\}$, 
\begin{equation}\label{eq:thm:HGp:LB}
\Pr(X \ge (1+\eps)\mu) \ge \exp\Bigl(- C(\eps)\min\bigl\{\mu, \: \sqrt{\mu}\log(1/p)\bigr\}\Bigr). 
\end{equation}
\end{theorem}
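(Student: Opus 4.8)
The plan is to treat the upper and lower bounds separately, exploiting the two-regime structure of the exponent $\min\{\mu,\sqrt{\mu}\log(e/p)\}$.

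For the \emph{lower bound} \eqref{eq:thm:HGp:LB}, I would use the ``planting'' strategy motivated in the discussion after Theorem~\ref{thm:APp}. When $\sqrt{\mu}\log(1/p) \le \mu$ (equivalently $p$ not too small), property $\fX(\cH,D,(1+\eps)\mu)$ supplies a vertex set $W$ with $|W| \le D\max\{\sqrt{(1+\eps)\mu},1\}$ and $e(\cH[W]) \ge (1+\eps)\mu$; conditioning on the event $W \subseteq V_p(\cH)$, which has probability $p^{|W|} = \exp(-O(\sqrt{\eps\mu}\log(1/p))) \ge \exp(-C(\eps)\sqrt{\mu}\log(1/p))$, already forces $X \ge e(\cH[W]) \ge (1+\eps)\mu$. (One needs $(1+\eps)\mu \le X_{k,n,1} = e(\cH)$ for $W$ to possibly exist, hence the indicator; and $(1+\eps)\mu\ge 1$ to make the $\max$ harmless.) When instead $\mu < \sqrt{\mu}\log(1/p)$ (very small $p$), one wants $\exp(-C(\eps)\mu)$; here I would instead plant a single edge or a bounded-degree cluster: conditioning on one fixed edge $f$ being present costs $p^k$, and more generally one can condition on a set of $\Theta(\mu/\text{(size)})$ disjoint-ish edges being present, using $\Delta_2(\cH)=O(1)$ to control overlaps — the cost is $p^{O(\sqrt{\mu})}$ or, by a direct second-moment/Paley–Zygmund argument on the conditioned variable, $e^{-O(\mu)}$. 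The two constructions are combined by taking whichever regime we are in.

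For the \emph{upper bound} \eqref{eq:thm:HGp:UB}, the core is a decoupling between a ``bad clustered'' contribution and a ``well-spread'' contribution. I would first reduce to bounding $X$ by analyzing the set of ``heavy'' vertices — those lying in many induced edges. The standard Janson–Ruciński--type argument (Theorem~2.1 of~\cite{UTAP}) already yields the $\sqrt{\mu}$ branch up to a $\log(1/p)$ factor; the improvement to the full $\min\{\mu,\sqrt{\mu}\log(e/p)\}$ comes from a more careful entropy/union-bound over the possible locations of the heavy vertices. Concretely: if $X \ge (1+\eps)\mu$, then either there is a set $W$ of $O(\sqrt{\mu})$ vertices carrying $\Omega(\eps\mu)$ of the induced edges (a ``clustered'' witness), and by a union bound over all $\binom{v(\cH)}{O(\sqrt{\mu})}$ such sets times $p^{|W|}$ one gets $\exp(-\Omega(\sqrt{\mu}(\log(1/p) - O(\log n /\sqrt{\mu}))))$ — and since $e(\cH)=\Theta(n^2)$ forces $\sqrt{\mu} \gtrsim np^{k/2}$, the $\log n$ loss is absorbed into $\log(e/p)$, or the regime is the $\mu$-branch where a crude bound suffices; or no such $W$ exists, in which case $X$ is genuinely well-concentrated and a martingale/Azuma-type or Kim–Vu polynomial-concentration bound gives $\exp(-\Omega(\eps^2 \mu / \Delta))$ with $\Delta = O(1+np^{k-1})$ — which is $\Omega(\min\{\mu,\sqrt{\mu}\log(e/p)\})$ once the clustered case is excluded. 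The dependence $c(\eps) = b\min\{\eps^3,\eps^{1/2}\}$ reflects optimizing the split point between the two cases.

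The main obstacle, I expect, is the upper bound in the intermediate range of $p$ (around $p = n^{-2/k}$, where $\mu = \Theta(1) \cdot n^{2}p^k$ transitions and the two branches of the minimum cross): there the union bound over heavy-vertex sets must be executed with exactly the right granularity so that the entropy cost $O(\sqrt{\mu}\log(v(\cH)/\sqrt{\mu}))$ does not overwhelm the gain $O(\sqrt{\mu}\log(1/p))$. Making this quantitative — and uniform in $\eps$ and $p$ down to $p \to 0$, without the usual $p \ge n^{-2/k}$ or $\eps = \Theta(1)$ hypotheses — is where the real work lies, and is presumably why the theorem is stated with $\log(e/p)$ (rather than $\log(1/p)$) in the upper bound, to keep the estimate meaningful as $p \to 1$.
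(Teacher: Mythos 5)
Your lower-bound sketch follows the paper's in spirit (clustering via property $\fX$, Theorem~\ref{thm:LB:constrp}, for $\sqrt\mu\log(1/p)\le\mu$; Poisson-type bounds for the other regime), though you leave the small-$p$ case essentially unspecified. The paper carries it out in Theorem~\ref{thm:LB:disj} and Lemma~\ref{lem:LB:disj}, where one proves $\Pr(X=m)\gtrsim\binom{e(\cH)}{m}p^{km}(1-p^k)^{e(\cH)-m}$ using $\Delta_2(\cH)=O(1)$ to control overlaps and then applies Stirling; this is roughly what you gesture at, but it is not something one can wave through.

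The upper-bound proposal has a genuine flaw, and it is not a ``granularity'' issue: the union bound over heavy vertex sets of size $m=\Theta(\sqrt\mu)$ is \emph{vacuous}. Since $\mu=\Theta(n^2p^k)$, we have $\sqrt\mu=\Theta(np^{k/2})$ and hence $v(\cH)/\sqrt\mu=\Theta(p^{-k/2})$, so $\log\binom{v(\cH)}{m}=\Theta\bigl(\sqrt\mu\log(v(\cH)/\sqrt\mu)\bigr)=\Theta\bigl(\tfrac{k}{2}\sqrt\mu\log(1/p)\bigr)$, which for $k\ge3$ already exceeds the gain $\sqrt\mu\log(1/p)$; equivalently, $\binom{v(\cH)}{m}p^m\ge\bigl(\Theta(p^{-(k/2-1)})\bigr)^m\gg1$ for small $p$. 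Your expression $\sqrt\mu\bigl(\log(1/p)-O(\log n/\sqrt\mu)\bigr)$ implicitly estimates the entropy cost as $O(\log n)$ instead of $\Theta(\sqrt\mu\log(1/p))$, a discrepancy of order $\sqrt\mu$. Restricting the union to sets $W$ with $e(\cH[W])\ge\eps\mu$ does not rescue this unless one can count such sets, which is a hard extremal question that your proposal does not address. Separately, the ``well-spread'' case cannot be handled by Kim--Vu or Azuma/McDiarmid: Kim--Vu carries exactly the $\log$-loss the theorem removes, and Azuma on a degree-$\le r$ restriction of $X$ gives only $\exp(-t^2/(2v(\cH)r^2))$, a $1/r^2$ scaling; with $t=\eps\mu$ and $r=\Theta(1+np^{k-1})$ this yields $\eps^2np^2$, which is far below the target $\sqrt\mu\log(e/p)=\Theta(np^{k/2}\log(e/p))$ for small $p$ and $k\ge3$. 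The paper avoids both pitfalls: it never enumerates vertex sets but bounds the maximum number $M_r(\cH_p)$ of vertex-disjoint $r$-stars via the BK inequality (Theorem~\ref{thm:CD}, Lemmas~\ref{lem:Mr}--\ref{lem:MrH}), so the only union bound appearing is the single-vertex sum $\Phi_r=\sum_v\Pr(|\Gamma_v(\cH_p)|\ge\ceil{r})$; and it controls the bounded-degree part $X_r$ from~\eqref{def:Xr} via the new Chernoff-type inequality Theorem~\ref{thm:C}, whose exponent scales as $1/C$ rather than $1/C^2$. These two ingredients --- the BK-based disjoint-star bound and the $1/C$-scaling Chernoff inequality --- are the essential content, and neither is present in your proposal.
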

\begin{remark}\label{rem:HGp}
In many applications $\fX(\cH,D,x)$ holds automatically for all $x \le e(\cH)$. 
Indeed, often we consider sequences $(\cH_n)_{n \in \NN}$ of hypergraphs satisfying $e(\cH_{n} \cap \cH_{m}) \ge \beta e(\cH_{m})$ for all $n \ge m \ge n_0 $, where $\beta \in (0,1]$ and $n_0 \ge 1$ are constants ($\beta=1$ for monotone sequences, where $\cH_n \subseteq \cH_{n+1}$). 
Then $\fX(\cH_n,D',x)$ follows (by increasing $D$) from $v(\cH_m) \le D m$ and $e(\cH_m) \ge am^2$ 
for $m=\min\{r,n\}$ and suitable $r=\Theta(\max\{\sqrt{x},1\})$. 
\end{remark}
Note that $a n^2 \le e(\cH) \le (v(\cH))^2 \Delta_2(\cH) \le D^3 n^2$, so $\mu=\Theta(n^2p^k)$. 
For \eqref{eq:thm:HGp:LB}, the necessary condition $(1+\eps)\mu \le e(\cH)$ usually entails $\fX(\cH,D,(1+\eps)\mu)$  by Remark~\ref{rem:HGp}, and, as discussed, $(1+\eps)\mu \ge 1$ is very natural (in fact, usually vacuous). 
The assumption $k \ge 3$ is also necessary. 
Indeed, for a concrete counterexample with $k=2$, let $\cH=\cH_n$ contain all pairs $\{x,y\} \subseteq [n]$. 
Since $|[n]_p|$ has a binomial distribution, using $X=e(\cH_p)=\binom{|[n]_p|}{2} \approx |[n]_p|^2/2$ it is 
not difficult to see that $\log \Pr(X \ge (1+\eps) \E X)=-\Theta(\sqrt{\E X})$ for constant~$\eps$ (so there is no extra logarithmic factor).

Turning to applications, using Remark~\ref{rem:HGp} it is easy to see that Theorem~\ref{thm:HGp} applies to the number of arithmetic progressions of length $k$ in $[n]_p$, and so implies Theorem~\ref{thm:APp}. The assumptions of Theorem~\ref{thm:HGp} are also satisfied by Schur triples, which are classical objects 
in Number theory and Ramsey theory (see, e.g.,~\cite{Gr,Sa} and \cite{GRR,Schacht2009}): 
in this case $\cH=\cH_n$ contains all $3$-element subsets $\{x,y,z\} \subseteq [n]$ satisfying $x+y=z$. 
A similar remark applies to the more general notion of $\ell$-sums (studied, e.g., in~\cite{BHKLS,RZ12}), where the $3$-element subsets $\{x,y,z\} \subseteq [n]$ satisfy $x+y=\ell z$. 
Finally, the arguments in Section~2.1 of~\cite{UTAP} reveal that Theorem~\ref{thm:HGp} also applies to the number of integer solutions of certain homogeneous linear systems of equations with rank~$k-2$.

While results similar to Theorem~\ref{thm:HGp} (with constants $c,C$ depending on $\eps$) are usually already considered satisfactory, in this paper we obtain much more precise estimates. Indeed, with Theorem~\ref{thm:HG} below we recover, in a very wide range, the dependence of the upper tail on $t=\eps \mu$ (up to constants). 
Theorem~\ref{thm:HG} looks hard to digest, so we will now spend some time motivating and explaining it. 
As a warm-up, let us first informally discuss the asymptotic form of its upper tail estimates for $X=e(\cH_p)$. 
In particular, since our focus is on exponentially decaying probabilities, in \eqref{eq:thm:HG:UB} and \eqref{eq:thm:HG:LB} below the multiplicative factors of~$1+n^{-1}$ and~$d$ are usually negligible (i.e., can be removed by adjusting the constants~$c,C$). 
Hence, assuming $n^{-2/k}(\log n)^{2/k} \le p \le 1/2$ and $t \ge \sqrt{\Var X}$, say, via Remarks~\ref{rem:Var}--\ref{rem:HG} the form of~\eqref{eq:thm:HG:UB}--\eqref{eq:thm:HG:LB} eventually 
simplifies~to 
\begin{equation}\label{eq:thm:HG}
\log \Pr(X \ge \mu+t) = -\Theta\biggl(\min\biggl\{\frac{t^2}{\Var X}, \: \sqrt{t}\log(1/p)\biggr\}\biggr).
\end{equation}
With this in mind, Theorem~\ref{thm:HG} essentially states that 
the upper tail of $X=e(\cH_p)$ is either of sub-Gaussian type $\exp\bigl(-ct^2/\Var X\bigr)$ or of `clustered' type $\exp\bigl(-c\sqrt{t}\log(1/p)\bigr)$, and that the transition between the two  
happens roughly for~$t$ around~$(\Var X)^{2/3}$.  
In this context the upper bound~\eqref{eq:thm:HG:UB} of Theorem~\ref{thm:HG} is very satisfactory. 
Namely, it holds via~(a) for \emph{all} $t>0$ unless~$p$ is close to $p_0=n^{-1/(k-1)}$, in which case~\eqref{eq:thm:HG:UB} still holds for~$t \ge (\Var X)^{2/3} (\log n)^{4/3}$ via~(b). 
In words, our upper bound~\eqref{eq:thm:HG:UB} recovers the qualitative behaviour of the upper tail for all $t>0$, 
unless~$p$ is in a tiny exceptional interval around~$p_0$ 
(where we basically only miss the sub-Gaussian regime). 
\begin{theorem}\label{thm:HG}
Given $k \ge 3$, $a>0$ and $D \ge 1$, suppose that $\cH=\cH_n$ is a $k$-uniform hypergraph satisfying $v(\cH) \le D n$, $e(\cH) \ge an^2$ and $\Delta_{2}(\cH) \le D$. 
Let $X=e(\cH_p)$, $\mu=\E X$, $\Lambda = \mu (1+np^{k-1})$ and $\varphi(x)=(1+x)\log(1+x)-x$. 
Given $\gamma \in (0,1)$, there are $n_0>0$ (depending only on $k,a,D$) as well as $c,C,d>0$ and $\lambda \ge 1$ (depending only on $\gamma,k,a,D$) such that for all $n \ge n_0$, $p \in (0,1]$ and $t > 0$ the following holds.  
 If one of 
{\vspace{-0.5em}\begin{enumerate}
\itemindent 0.75em \itemsep 0.125em \parskip 0em  \partopsep=0pt \parsep 0em 
 \item[(a)] $p \not\in \bigl(n^{-1/(k-1)-\gamma},\gamma n^{-1/(k-1)}(\log n)^{1/(k-1)}\bigr)$, or
 \item[(b)] $t \ge \gamma \min\{(\Var X)^{2/3},\mu^{2/3}\}(\log n)^{4/3}$, or 
 \item[(c)] $t \ge \mu p^{(k-2)/3-\gamma}$. 
 \end{enumerate}\vspace{-0.5em}\noindent}
holds, then we have the upper bound 
\begin{equation}\label{eq:thm:HG:UB}
\Pr(X \ge \mu+t) \le (1+n^{-1}) \exp\Bigl(-c\min\bigl\{\varphi(t/\mu) \mu^2/\Lambda, \: \sqrt{t}\log(e/p)\bigr\}\Bigr).
\end{equation}
Furthermore, if one of 
{\vspace{-0.5em}\begin{enumerate}
\itemindent 0.75em \itemsep 0.125em \parskip 0em  \partopsep=0pt \parsep 0em 
 \item[(i)] $p \le n^{-2/(k+1/3)}$, or 
 \item[(ii)] $t \ge \min\{(\Var X)^{2/3},\mu^{2/3}\}(\log n)^{2/3}$ and $p \le n^{-1/(k-1)}\log n$, or
 \item[(iii)] $t \ge \min\{\sqrt{\Var X},\sqrt{\Lambda}\}$ and $\gamma n^{-1/(k-1)} \le p \le 1-\gamma$.
 \end{enumerate}\vspace{-0.5em}\noindent}
holds, then $\fX(\cH,D,\min\{\lambda t,\mu+t\})$ and $\mu + t \ge 1$ imply the lower bound 
\begin{equation}\label{eq:thm:HG:LB}
\Pr(X \ge \mu + t) \ge d \exp\Bigl(-C\min\bigl\{\varphi(t/\mu) \mu^2/\Lambda, \: \sqrt{t}\log(1/p) \bigr\}\Bigr) .
\end{equation}
\end{theorem}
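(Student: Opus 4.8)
The plan is to prove the upper bound~\eqref{eq:thm:HG:UB} and the lower bound~\eqref{eq:thm:HG:LB} separately, and in each case to handle the two summands inside the $\min$ by two independent arguments: a ``global fluctuation'' argument (Gaussian/Poisson, producing the term $\varphi(t/\mu)\mu^2/\Lambda$ via $\Lambda=\Theta(\Var X)$) and a ``localized cluster'' argument (producing the term $\sqrt t\,\log(1/p)$, and governed by the property $\fX$). The various regime conditions (a)--(c) and (i)--(iii) simply record the ranges of $(p,t)$ in which the respective arguments go through.

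\textbf{Upper bound.} I would use a two-event decomposition. Let $\cB$ be the event that $V_p(\cH)$ contains a set $W$ with $|W|=\Theta(\sqrt t)$ spanning $\Omega(t)$ edges of $\cH$ (such clusters are the only efficient way to overshoot by $t$, and $e(\cH[W])=O(\Delta_2(\cH)|W|^2)$ makes the parameters consistent). A union bound gives $\Pr(\cB) \le \sum_{W} p^{|W|} \le \exp\bigl(O(\sqrt t\,\log n)-\Omega(\sqrt t\,\log(1/p))\bigr)$, which is $\exp(-\Omega(\sqrt t\,\log(1/p)))$ \emph{provided} $\log(1/p)$ dominates $\log n$; this is exactly where the exceptional window $p\in(n^{-1/(k-1)-\gamma},\gamma n^{-1/(k-1)}(\log n)^{1/(k-1)})$ must be excluded in (a), respectively where the alternatives (b), (c) force $t$ (hence $\sqrt t$) large enough that a finer weighted count absorbs the loss. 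On the complement $\cB^{c}$, a DeMarco--Kahn-type structural observation bounds the number of ``heavy'' vertices --- those whose semi-induced degree $|\{f\ni v:f\setminus\{v\}\subseteq V_p(\cH)\}|$ exceeds $\rho:=\Theta(1+np^{k-1})$ --- since many heavy vertices would build a cluster witnessing $\cB$; reveal these few heavy vertices first and absorb their bounded contribution by a further union bound, then run the vertex-exposure martingale $X_i=\E[X\mid \xi_{v_1},\dots,\xi_{v_i}]$ on the remaining light vertices, which has increments $O(\rho)$ and predictable quadratic variation $\sum_i\E[(X_i-X_{i-1})^2\mid\cF_{i-1}]=O(p\rho)\cdot\E X^{+}=O(\Lambda)$, where $X^{+}$ counts edges missing at most one vertex. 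Feeding the bounds $O(\rho)$ and $O(\Lambda)$ into a Freedman-type exponential martingale inequality \emph{in Bennett rather than Bernstein form} yields $\Pr(X\ge\mu+t,\cB^{c})\le\exp(-\Omega(\varphi(t/\mu)\mu^2/\Lambda))$ --- the Bennett form, together with the exact matchings $\Lambda/\rho^2=\Theta(\mu^2/\Lambda)$ and $\rho t/\Lambda=\Theta(t/\mu)$, is precisely what produces the Poisson rate function $\varphi$ rather than a mere $t^2/\Lambda$. Summing the two contributions gives~\eqref{eq:thm:HG:UB}; the harmless factor $1+n^{-1}$ comes from first conditioning on a quantitatively overwhelming event such as $|V_p(\cH)|\le 2Dn$.

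\textbf{Lower bound.} Here I would use two competing constructions and keep whichever matches the active term of the $\min$. \emph{Localized cluster:} apply $\fX(\cH,D,s)$ with $s=\min\{\lambda t,\mu+t\}$ to get $W$ with $|W|\le D\sqrt s$ and $e(\cH[W])\ge s$, and condition on $W\subseteq V_p(\cH)$, an event of probability $p^{|W|}\ge\exp(-D\sqrt s\,\log(1/p))=\exp(-\Omega(\sqrt t\,\log(1/p)))$. If $\lambda t\ge\mu+t$ then already $X\ge e(\cH[W])\ge\mu+t$; otherwise $X=e(\cH[W])+Y$ where $Y$ counts the induced edges meeting $V(\cH)\setminus W$, and since conditioning leaves the vertices outside $W$ i.i.d.\ $\{0,1\}$-valued with mean $p$ one has $\E[Y]\ge\mu-O(\sqrt s\,\Delta_1(\cH)\,p^{k})=(1-o(1))\mu$, so a Chebyshev bound (using $\Var Y=O(\Var X)$ and, via (ii)--(iii), $t\gtrsim\sqrt{\Var X}$) gives $Y\ge\mu-(\lambda-1)t$, hence $X\ge\mu+t$, with probability $\Omega(1)$; by Harris's inequality these two events are positively correlated, so $\Pr(X\ge\mu+t)\ge\Omega(1)\,p^{|W|}$. \emph{Global fluctuation:} when $p$ is small, case (i), one has $np^{k-1}=O(1)$, so $\Lambda=\Theta(\mu)$, and a Chen--Stein Poisson approximation together with $\Pr(\mathrm{Po}(\mu)\ge\mu+t)\ge\exp(-O(\mu\varphi(t/\mu)))$ gives~\eqref{eq:thm:HG:LB}; for larger $p$ (so that $np^{k-1}=\Omega(1)$, into the Poisson-type regime) one instead conditions on an atypically large $|V_p(\cH)|=\lceil p\,v(\cH)\rceil+m$ with $m\asymp t/(np^{k-1})$, which costs $\exp(-O(np\,\varphi(t/\mu)))=\exp(-O(\varphi(t/\mu)\mu^2/\Lambda))$ by Stirling's formula while pushing $\E[X\mid|V_p(\cH)|]$ past $\mu+t$; finally, in the genuinely sub-Gaussian range of case (iii), a quantitative Normal (moderate-deviation) lower bound for $X$ --- obtained by tilting the underlying product measure or via a local-CLT estimate --- supplies $\exp(-O(t^2/\Var X))=\exp(-O(\varphi(t/\mu)\mu^2/\Lambda))$. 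The constant $d$ absorbs the $\Omega(1)$ losses in these arguments.

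\textbf{Main obstacle.} The crux is the upper bound in the transitional window $p\approx n^{-1/(k-1)}$: there $\Lambda\approx\mu$, so the martingale can only tolerate a semi-degree threshold $\rho$ of bounded order, while the cluster union bound carries a $\log n$ loss of the same order as the available $\log(1/p)$ gain, so neither half is individually strong enough; reconciling them is exactly what forces the case split (a)/(b)/(c) (either avoid this $p$, or demand $t$ so large that the logarithmic slack becomes negligible), and it is also where the DeMarco--Kahn-type counting of heavy vertices must be done with care. A secondary difficulty is the sub-Gaussian range of the lower bound, since recovering the \emph{precise} rate function $\varphi$ there (rather than matching only up to $\eps$-dependent constants, as in the classical moment method) requires Poisson/Normal lower approximations for $X=e(\cH_p)$ that hold uniformly over all almost-linear $\cH$ with explicit, non-asymptotic control of the tail.
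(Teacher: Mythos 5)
Your upper bound strategy is genuinely different from the paper's and has two real gaps. First, the naive union bound $\Pr(\cB)\le\sum_W p^{|W|}$ over candidate cluster sets $W$ of size $\Theta(\sqrt t)$ loses a factor $\exp\bigl(O(\sqrt t\log n)\bigr)$ that, as you yourself note, is only absorbed when $\log(1/p)$ dominates $\log n$; but this excludes most of the parameter range covered by cases (a)--(c) --- for example case~(a) explicitly includes all $p\ge\gamma n^{-1/(k-1)}(\log n)^{1/(k-1)}$, where $\log(1/p)=\Theta(\log n)$. The paper never does a union bound over vertex clusters: it works with the maximal vertex-disjoint $r$-star matching $M_r(\cH_p)$, and the key probabilistic step is a BK-inequality-based factorial-moment estimate (Theorem~\ref{thm:CD}, yielding Lemma~\ref{lem:Mr}: $\Pr(M_r(\cH_p)\ge y)\le\Phi_r^{\ceil{y}}/\ceil{y}!$). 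The $1/\ceil{y}!$ factor and the Poisson-type per-vertex decay of $\Phi_r$ --- derived in Lemma~\ref{lem:MrH} from the codegree bound $\Delta_2(\cH)=O(1)$ and the `overshooting' choice $r\gg np^{k-1}$ --- are precisely what absorb the $\log n$ loss and produce the $\log(e/p)$ term; your unspecified ``finer weighted count'' would have to be this. Second, your Doob martingale step is circular: you cannot ``reveal the few heavy vertices first,'' since whether a vertex is heavy is a function of the entire realization. The paper sidesteps martingales entirely by introducing $X_r=\max\{e(\cG):\cG\subseteq\cH_p,\ \Delta_1(\cG)\le r\}$, which by construction satisfies the deterministic Lipschitz-type condition $\sum_{e\in\cG:e\sim f}Y_e\le kr$ needed for the factorial-moment inequality Theorem~\ref{thm:C}; this gives the Bennett-form exponent $\varphi(t/\mu)\mu/(4kr)$ in Lemma~\ref{lem:Xr} directly. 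The bridge from $X$ back to $X_r$ is combinatorial: Lemma~\ref{lem:approx} iteratively sparsifies $\cH_p$ across geometric scales $r_j=2^jr$ and bounds the removed edges via the matchings $M_{r_j}$, and Lemma~\ref{lem:T} bounds the failure probability of the corresponding good event $\cT(\beta,\gamma,r,t)$.

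Your lower bound sketch is much closer to the paper's: the localized cluster plus Harris plus Paley--Zygmund argument is essentially Theorem~\ref{thm:LB:constr}, and the ``overpopulate $V_p(\cH)$'' construction is Theorem~\ref{thm:LB:vx}. Two corrections. For the Poisson regime (case~(i)) the paper uses a direct combinatorial argument (Lemma~\ref{lem:LB:disj}: count the edge-subsets in $\fS_m$ and bound $\Pr(\cI=\cH_p\mid\cI\subseteq\cH_p)$) rather than Chen--Stein, which controls total variation but by itself gives no lower bound on an exponentially small tail. And the ``tilting / local CLT'' you invoke for the sub-Gaussian part of~(iii) is not needed: after conditioning on $|V_p(\cH)|=m$ a second-moment (Paley--Zygmund) estimate already supplies the $\Omega(1)$ factor, which is all that is required once the exponential cost has been paid by the binomial tail of $|V_p(\cH)|$.
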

\begin{remark}\label{rem:Var}
It is routine to check that $\Var X = \Theta\bigl((1-p)\Lambda\bigr)$, where the implicit constants depend only on $k,a,D$ (analogously to, e.g., Example~3.2 and Lemma~3.5 in~\cite{JLR}). In particular, $\Lambda = \Theta(\Var X)$ holds whenever~$p$ is bounded away from one. 
\end{remark}
\begin{remark}\label{rem:HG}
If $p \ge \gamma n^{-2/k}(\log n)^{2/k}$ or $t \le \mu$, then \eqref{eq:thm:HG:UB}--\eqref{eq:thm:HG:LB} hold with $\varphi(t/\mu) \mu^2/\Lambda$ replaced by $t^2/\Lambda$. 
\end{remark}
In the above assumptions~(a)--(c) and~(i)--(iii), the use of~$\mu$ and~$\Lambda$ is convenient for applications (see, e.g.,~\eqref{eq:thm:t} below), while~$\Var X$ seems more insightful from a conceptual point of view. 
In particular, since we are interested in exponentially small probabilities, by central limit theorem considerations a natural target assumption is~$t \ge \sqrt{\Var X}$, say. 
We now discuss the lower bound~\eqref{eq:thm:HG:LB} of Theorem~\ref{thm:HG}, which tends to have fewer applications. Indeed, for our purposes~\eqref{eq:thm:HG:LB} is mainly important from a concentration-of-measure perspective, since it rigorously \emph{proves} that our upper bound~\eqref{eq:thm:HG:UB} is sharp in a wide range. % (up to constant factors in the exponent). 
In view of~(i)+(iii), our lower bound~\eqref{eq:thm:HG:LB} only falls short of the target assumption $t \ge \sqrt{\Var X}$ 
for $p \in \bigl(n^{-2/(k+1/3)},n^{-1/(k-1)}\bigr)$, where  $t \ge (\Var X)^{2/3}(\log n)^{2/3}$ suffices by~(ii).  
Perhaps surprisingly, these gaps are solely due to lacking lower bounds of sub-Gaussian type (note that the variance undergoes a transition around $p_0=n^{-1/(k-1)}$ by Remark~\ref{rem:Var}), which until now have been widely ignored in the upper tail literature (see, e.g.,~\cite{Vu2002,WG2012}). 
Here our current approaches %(see Sections~\ref{sec:LB:disj} and~\ref{sec:LB:vx}) 
seem not strong enough to work for all relevant~$p$ and~$t$. 
We leave it as an open problem to develop a generic method for obtaining suitable sub-Gaussian type lower bounds (see Section~\ref{sec:LB:disj}).  
Finally, we also conjecture that the upper tail estimates~\eqref{eq:thm:HG:UB}--\eqref{eq:thm:HG:LB} 
remain valid for all $p \in (0,1-\gamma]$ and $t \ge \sqrt{\Var X}$.

Turning to the remaining applications stated in the introduction,  
Theorem~\ref{thm:APt} for arithmetic progressions follows easily by combining (a)+(iii) of Theorem~\ref{thm:HG} with Remarks~\ref{rem:HGp}, \ref{rem:Var} and~\ref{rem:HG}. 
For Theorem~\ref{thm:AP} we use that, modulo obvious assumptions, the tail estimates \eqref{eq:thm:HG:UB}--\eqref{eq:thm:HG:LB} \emph{both} apply if $t>0$ satisfies, say, 
\begin{equation}\label{eq:thm:t}
t \ge \begin{cases}
		0, & ~~\text{if $0 < p \le n^{-2/(k+1/3)}$}, \\
		\mu^{2/3}(\log n)^{4/3}, & ~~\text{if $n^{-2/(k+1/3)} < p < n^{-1/(k-1)}(\log n)^{1/(k-1)}$},\\
		\sqrt{\Lambda}, & ~~\text{if $n^{-1/(k-1)}(\log n)^{1/(k-1)} \le p \le 1-\gamma$}.
	\end{cases}
\end{equation}
(Using~(a)+(i) for $p \le n^{-2/(k+1/3)}$, (b)+(ii) for larger $p < n^{-1/(k-1)}(\log n)^{1/(k-1)}$, and (a)+(iii) otherwise.) 
As $\mu \ge a n^2p^k$ and $\Lambda = \mu (1+np^{k-1})$, a short calculation reveals that, say, $t \ge \mu n^{-1/(5k+1)}$ implies \eqref{eq:thm:t} for all $n \ge n_0(k,a)$ and $p \in (0,1]$. 
Hence, using Remarks~\ref{rem:HGp} and~\ref{rem:Var}, inequality~\eqref{eq:thm:AP} of Theorem~\ref{thm:AP} follows.

The proofs of the upper and lower bounds of Theorem~\ref{thm:HGp} and~\ref{thm:HG} are based on completely different techniques. 
For the upper bounds~\eqref{eq:thm:HGp:UB} and~\eqref{eq:thm:HG:UB}, the most important ingredients are two new concentration inequalities of Chernoff-type, which we prove in Section~\ref{sec:C}. 
These allow us to combine and extend the combinatorial and probabilistic ideas used in the `deletion method' and the `approximating by a disjoint subfamily' technique of Janson and Ruci{\'n}ski~\cite{DL} and Spencer~\cite{Spencer1990,UT}, respectively. 
The idea of applying the BK-inequality of van den Berg and Kesten~\cite{BK} and Reimer~\cite{BKR} in the context of the `infamous' upper tail problem~\cite{UT}  
may perhaps also be of independent interest. 
For the lower bounds~\eqref{eq:thm:HGp:LB} and~\eqref{eq:thm:HG:LB}, we analyze three different mechanisms that yield deviations of $X=e(\cH_p)$, and with some care (using, e.g., Harris' inequality~\cite{Harris1960} and the Paley--Zygmund inequality) we recover the correct dependence of the exponent on~$t=\eps\mu$. 

The remainder of this paper is organized as follows.
In Section~\ref{sec:C} we introduce our new concentration inequalities, and in Section~\ref{sec:UB} we apply them (together with combinatorial arguments) to prove the upper bounds of Theorem~\ref{thm:HGp} and~\ref{thm:HG}. 
Finally, in Section~\ref{sec:LB} we establish the corresponding lower bounds (and also prove Remark~\ref{rem:HG}).

\section{Concentration inequalities}\label{sec:C}
In this section we introduce our main probabilistic tools: two concentration inequalities which essentially state  
that Chernoff-type upper tail estimates hold  whenever $X$ is 
bounded from above by a sum of random variables with `well-behaved dependencies'. 
They develop ideas of Janson and Ruci{\'n}ski~\cite{DL}, Erd{\H{o}}s and Tetali~\cite{disjoint}, and Spencer~\cite{Spencer1990}, and seem of independent interest. 
On first reading of Theorem~\ref{thm:C} it might be useful to consider the special case where there are independent random variables $(\xi_i)_{i \in \cA}$ such that each $Y_\alpha \in \{0,1\}$ with $\alpha \in \cI$ is a function of $(\xi_i)_{i \in \alpha}$. 
Then, defining $\alpha \sim \beta$ if $\alpha \cap \beta \neq \emptyset$, it is immediate that the independence assumption holds (as $\alpha \not\sim \beta$ implies that $Y_{\alpha}$ and $Y_{\beta}$ depend on disjoint sets of variables~$\xi_i$). Now, consider $X = \sum_{\alpha \in \cI} Y_\alpha$ with $\mu=\E X$, $\cJ = \cI$ and $C = \max_{\beta \in \cI}|\{\alpha \in \cI: \alpha \sim \beta\}|$.  
Then $X=Z_C$, where $\max_{\beta \in \cJ}\sum_{\alpha \in \cJ: \alpha \sim \beta} Y_{\alpha} \le C$ intuitively corresponds to a Lipschitz-like condition. 
With this in mind, part of the power of \eqref{eq:C} is that the exponent scales with $1/C$ (instead of the usual $1/C^2$), and that the Lipschitz condition need not hold deterministically (it suffices if $X \le Z_C$ or $X \approx Z_C$ holds off some exceptional event). 
\begin{theorem}\label{thm:C}
Given a family of non-negative random variables $(Y_{\alpha})_{\alpha \in \cI}$ with $\sum_{\alpha \in \cI} \E Y_{\alpha} \le \mu$, assume that~$\sim$ is a symmetric relation on $\cI$ such that each $Y_{\alpha}$ with $\alpha \in \cI$ is independent of $\{Y_{\beta}: \text{$\beta \in \cI$ and $\beta \not\sim\alpha$}\}$. 
Let $Z_C = \max \sum_{\alpha \in \cJ} Y_\alpha$, where the maximum is taken over all $\cJ \subseteq \cI$ with $\max_{\beta \in \cJ}\sum_{\alpha \in \cJ: \alpha \sim \beta} Y_{\alpha} \le C$.  
Set $\varphi(x)=(1+x)\log(1+x)-x$.  
Then for all $C,t>0$ we have 
\begin{equation}\label{eq:C}
\begin{split}
\Pr(Z_C \ge \mu +t) 
& 
\le \exp\left(-\frac{\varphi(t/\mu)\mu}{C} \right) = 
e^{-\mu/C} \cdot 
\left(\frac{e\mu}{\mu+t}\right)^{(\mu+t)/C} \\ 
& 
\le \min\left\{ \exp\left(-\frac{t^2}{2C(\mu +t/3)}\right), \left(1+\frac{t}{2\mu}\right)^{-t/(2C)}\right\} . 
\end{split}
\end{equation}
\end{theorem}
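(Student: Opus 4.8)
The natural route is the exponential-moment (Laplace transform) method. For $\lambda>0$, Markov's inequality gives $\Pr(Z_C\ge\mu+t)\le e^{-\lambda(\mu+t)}\,\E e^{\lambda Z_C}$, so the whole statement reduces to the moment bound
\[
\E e^{\lambda Z_C}\ \le\ \exp\!\Bigl(\tfrac{1}{C}\bigl(e^{C\lambda}-1\bigr)\mu\Bigr)\qquad\text{for all }\lambda>0 .
\]
Granting this, the substitution $\theta=C\lambda$ and optimization (the optimum is $\theta=\log(1+t/\mu)$) yields the first bound $\exp(-\varphi(t/\mu)\mu/C)$ in \eqref{eq:C}; the displayed identity there is a one-line computation, and the final two estimates follow from the elementary inequalities $\varphi(x)\ge x^2/(2+2x/3)$ and $\varphi(x)\ge\tfrac12 x\log(1+x/2)$, valid for all $x\ge0$. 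In short, the content of the theorem is that, up to the factor $C$ in the exponent, $Z_C$ has the moment generating function of a sum of independent nonnegative variables of total mean $\mu$.

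Before proving the moment bound I would make three reductions, each essentially routine. First, by monotone convergence one may take $\cI$ finite. Second, adjoining the diagonal pairs $(\alpha,\alpha)$ to $\sim$ only shrinks the family of admissible $\cJ$, hence does not increase $Z_C$, so one may assume $\sim$ reflexive; then every admissible $\cJ$ satisfies $Y_\beta\le C$ for all $\beta\in\cJ$, and $Z_C\equiv0$ unless $C\ge1$. Third, by a discretization argument (splitting each $Y_\alpha$ into many small pieces that inherit the $\sim$-relations of $\alpha$, so that the weighted-degree constraint defining $Z_C$ is preserved, and then rescaling) one reduces to the case where every $Y_\alpha\in\{0,1\}$ and $C\ge1$; alternatively the general nonnegative case can be treated directly via a fractional colouring of the conflict graph on the optimal subfamily.

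Now take $Y_\alpha\in\{0,1\}$. Fix an outcome, let $\cJ^*$ attain the maximum defining $Z_C$, and put $S=\{\alpha:Y_\alpha=1\}$ and $T=\cJ^*\cap S$, so $Z_C=|T|$. The defining constraint of $Z_C$ says precisely that, in the conflict graph on $T$ (join distinct $\alpha,\beta$ when $\alpha\sim\beta$), every vertex has degree at most $C-1$, hence at most $\floor C-1$; therefore $T$ is properly $\floor C$-colourable, $T=T_1\cup\dots\cup T_{\floor C}$, with each $T_i$ an independent set of the conflict graph. By the independence hypothesis $(Y_\alpha)_{\alpha\in T_i}$ are then mutually independent for each fixed $i$. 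The plan is to write $Z_C=\sum_{i\le\floor C}|T_i|$, apply Hölder's inequality across the $\floor C$ classes, and on each class invoke the Chernoff moment estimate $\E e^{\theta\sum_{\alpha\in T_i}Y_\alpha}\le\exp\bigl((e^{\theta}-1)\sum_{\alpha\in T_i}\E Y_\alpha\bigr)$ with $\theta=\floor C\,\lambda$; since the total mean is at most $\mu$ this produces $\E e^{\lambda Z_C}\le\exp\bigl((e^{\floor C\lambda}-1)\mu/\floor C\bigr)$, which is even slightly stronger than the target because $\floor C\le C$.

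The main obstacle is exactly this last step: the colouring $T=T_1\cup\dots\cup T_{\floor C}$, and indeed the set $\cJ^*$ itself, is outcome-dependent, so one cannot naively condition on it while retaining the independence of $(Y_\alpha)_{\alpha\in T_i}$ needed for the per-class Chernoff bound. Making the Hölder argument rigorous is where the real work lies, and I would do it by fixing an ordering of $\cI$ in advance, using a greedy colouring of the support $S$ (still with at most $\floor C$ colours), and revealing the $Y_\alpha$ in that order, so that the colour of each $\alpha$ is a function only of already-revealed coordinates — turning the outcome-dependent colouring into a filtration one can integrate against. It is worth noting that the cruder move of bounding $Z_C\le\floor C\cdot(\text{largest independent set within }S)$ and then union-bounding over independent sets is genuinely too lossy: it misses the factor $e^{-\mu/C}$ and cannot reach the $\varphi$-form. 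Apart from this and the third reduction above (passing from general nonnegative $Y_\alpha$ and arbitrary real $C$ to the $\{0,1\}$ case), the remaining steps — the optimization in $\lambda$ and the two elementary bounds on $\varphi$ — are routine.
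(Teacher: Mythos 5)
Your proposal takes a genuinely different route from the paper: the paper avoids the moment generating function entirely and instead works with a modified $m$-th factorial moment $M_m(\cK)=\sum^*_{(\alpha_1,\ldots,\alpha_m)\in\cK^m}\prod_i Y_{\alpha_i}$ restricted to $\sim$-free tuples, which (i) satisfies $\E M_m(\cI)\le\mu^m$ by the independence assumption, and (ii) satisfies the \emph{deterministic} lower bound $M_m(\cJ)\ge\prod_{j=0}^{m-1}(\mu+t-Cj)$ for the outcome-dependent $\cJ$ attaining $Z_C$, obtained by an elementary induction using the Lipschitz-type constraint $\sum_{\alpha\in\cJ:\alpha\sim\beta}Y_\alpha\le C$. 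Because (i) is taken over the fixed family $\cI$ while (ii) holds deterministically given any admissible $\cJ$, the outcome-dependence of $\cJ$ never causes trouble, and Markov's inequality applied to $M_m(\cI)$ finishes. Your MGF-plus-Hölder scheme, by contrast, has to decouple the color classes, and it is exactly there that you have a real gap.

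The gap is in the sentence where you claim a ``greedy colouring of the support $S$ (still with at most $\lfloor C\rfloor$ colours)''. That is false: the hypothesis only bounds the degree of the conflict graph restricted to an \emph{admissible} $\cJ$, not restricted to $S=\{\alpha:Y_\alpha=1\}$. In general $S$ can have arbitrarily large conflict-degree (the whole point of the theorem is that it applies in situations where the dependency graph of $\cI$, and hence of $S$, has huge degree), so the greedy colouring of $S$ in a fixed order can use far more than $\lfloor C\rfloor$ colours. Only $T=\cJ^*\cap S$ is $\lfloor C\rfloor$-colourable, and $\cJ^*$ is outcome-dependent — this is precisely the obstacle you flag, and the proposed fix does not resolve it. Even restricting to an \emph{adapted} greedy construction of some admissible $\cJ$ would not obviously help: the colour class $T_i$ remains a random subset of $\cI$ depending on the revealed $Y_\beta$ (including $\beta\sim\alpha$), so $\E e^{\theta\sum_{\alpha\in T_i}Y_\alpha}$ is an expectation of an exponential over a random index set, not a product of marginal MGFs, and the per-class Chernoff estimate does not follow. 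Beyond this central issue, the discretization reduction (splitting $Y_\alpha$ into pieces that ``inherit the $\sim$-relations'') is also not obviously sound, since the pieces of a single $Y_\alpha$ are maximally dependent on one another and the degree constraint does not simply rescale; but this is a secondary concern. The rest of the proposal — Markov with $\lambda$, the optimization yielding $\varphi$, the identity $\exp(-\varphi(t/\mu)\mu/C)=e^{-\mu/C}(e\mu/(\mu+t))^{(\mu+t)/C}$, and the two elementary bounds on $\varphi$ — is correct and mirrors the paper's final manipulations, but the core moment inequality is not established.
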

\begin{remark}\label{rem:C}
Theorem~\ref{thm:C} remains valid after weakening the independence assumption to a form of negative correlation: it suffices if 
$\E (\prod_{i \in [s]} Y_{\alpha_i}) \le \prod_{i \in [s]}  \E Y_{\alpha_i}$ for all $(\alpha_1, \ldots, \alpha_s) \in \cI^s$ satisfying $\alpha_i \not\sim \alpha_j$ for $i \neq j$. 
\end{remark}
Theorem~\ref{thm:C} extends several upper tail inequalities 
discussed in the survey of Janson and Ruci{\'n}ski~\cite{UT}.  
Indeed, consider $X=\sum_{\alpha \in \cI} Y_\alpha$ with $\mu=\E X$ and $\cJ = \cI$. 
For independent $Y_{\alpha} \in [0,1]$ we have $X=Z_1$ (note that $\alpha \sim \alpha$ for non-constant $Y_{\alpha}$), so that \eqref{eq:C} reduces to the classical Chernoff bound, see, e.g., Theorem~2.1 in~\cite{JLR}. 
Similarly, for generic $Y_{\alpha} \in [0,1]$ with dependency graph $\cG=\cG(\cI)$, where distinct $\alpha,\beta \in \cI=V(\cG)$ form an edge if $\alpha \sim \beta$ (cf.\ Section~2.6 in~\cite{UT}), 
we have $X=Z_{\Delta_1(\cG)+1}$. 
Hence~\eqref{eq:C} improves Theorem~5 in~\cite{UT}, which is based on the `breaking into disjoint matchings' technique 
of R{\"o}dl and Ruci{\'n}ski~\cite{RR1994}. 
Furthermore, using $C=t/(2r)$ it is easy to see that Theorem~\ref{thm:C} tightens Theorem~2.1 in~\cite{DL}, i.e., the basic theorem of the `deletion method' of Janson and Ruci{\'n}ski. 
In addition, \eqref{eq:C} extends Lemma~2 in~\cite{UT}, i.e., the main probabilistic ingredient of Spencer's `approximating by a disjoint subfamily' 
technique~\cite{Spencer1990}. 
Theorem~\ref{thm:C} is also related to a concentration inequality of Chatterjee~\cite{K3TailCh}; our assumptions are less technical and subjectively easier to check (e.g., readily implying Proposition~4.1 in~\cite{K3TailCh} via $C=3 \eps \ell np$). 
Remark~\ref{rem:C} is useful in the context of the uniform random graph $G_{n,m}$ (and related uniform models). 
To illustrate this we consider $Y_{\alpha}=\indic{\alpha \subseteq E(G_{n,m})}$ and set $\alpha \sim \beta$ if $\alpha \cap \beta \neq \emptyset$. 
In that case it is well-known (and not hard to check) 
that the negative correlation condition of Remark~\ref{rem:C} holds, demonstrating that Theorem~\ref{thm:C} applies to~$G_{n,m}$. 
\begin{proof}[Proof of Theorem~\ref{thm:C}] 
The proof is based on a variant of the $m$-th factorial moment which `forces independence'. % (so the moment generating function $\E e^{\lambda X}$ is \emph{not} assumed to be finite). 
In fact, we closely follow Lemma~2.3 in~\cite{DL} and Lemma~2.46 in~\cite{JLR}, but differ in some important details. 
Assume that $m \in \NN$ satisfies $1 \le m \le \ceil{(\mu+t)/C}$. 
For all $\cK \subseteq \cI$ and $s \in \NN$ with $s \ge 1$ we define 
\begin{equation*}\label{eq:Zs}
M_{s}(\cK) = \sideset{}{^*}\sum_{(\alpha_1, \ldots, \alpha_s)\in \cK^s}  \prod_{i \in [s]} Y_{\alpha_i},
\end{equation*} 
where $\sum^{*}_{(\alpha_1, \ldots, \alpha_s)\in \cK^s}$ denotes 
the sum over all tuples $(\alpha_1, \ldots, \alpha_s) \in \cK^s$ satisfying $\alpha_i \not\sim \alpha_j$ for $i \neq j$. 
The key point is that, by construction, the factors $Y_{\alpha_i} \ge 0$ in each term of $M_s(\cK)$ are independent. 
Hence 
\begin{equation}
\label{eq:C:EZ}
 \E M_m(\cI) = \sideset{}{^*}\sum_{(\alpha_1, \ldots, \alpha_m)\in \cI^m}  \E \bigl(\prod_{i \in [m]} Y_{\alpha_i}\bigr) = \sideset{}{^*}\sum_{(\alpha_1, \ldots, \alpha_m)\in \cI^m}  \prod_{i \in [m]} \E Y_{\alpha_i} \le  \Big(\sum_{\alpha \in \cI} \E Y_{\alpha}\Big)^m \le \mu^m .
\end{equation}

Now assume that $Z_C \ge \mu+t$ and $Z_C=\sum_{\alpha \in \cJ}Y_{\alpha}$ hold. 
Note that, by construction, $M_{1}(\cJ) = \sum_{\alpha \in \cJ} Y_\alpha = Z_C \ge \mu+t$. 
Furthermore, by choice of $\cJ$ (see the definition of $Z_C$), for all $(\alpha_1, \ldots, \alpha_s) \in \cJ^s$ we have 
\begin{equation*}\label{eq:C:UB}
\sum_{\substack{\alpha \in \cJ: \alpha \sim \alpha_i\\\text{for some $i \in [s]$}}} Y_\alpha \le \sum_{i \in [s]}\sum_{\alpha \in \cJ: \alpha \sim \alpha_i} Y_{\alpha} \le Cs. 
\end{equation*}
So, for all $s \in \NN$ with $1 \le s < m \le \ceil{(\mu+t)/C}$ it follows that 
\begin{equation}\label{eq:C:induct}
M_{s+1}(\cJ) =  \sideset{}{^*}\sum_{(\alpha_1, \ldots, \alpha_s) \in \cJ^s} \prod_{i \in [s]} Y_{\alpha_i} \cdot \Big(\sum_{\alpha \in \cJ} Y_\alpha-\sum_{\substack{\alpha \in \cJ: \alpha \sim \alpha_i\\\text{for some $i \in [s]$}}} Y_\alpha \Big) \ge  M_{s}(\cJ) \cdot (\mu+t-Cs) ,
\end{equation}
which by induction yields $M_m(\cJ) \ge \prod_{j=0}^{m-1}(\mu+t-Cj)$. % > 0$. 

Combining the above estimates for $M_m(\cI) \ge M_m(\cJ)$ and $\E M_m(\cI)$ with Markov's inequality, we obtain 
\begin{equation}\label{eq:C:Markov}
\Pr(Z_C \ge \mu +t) \le \Pr\Bigl(M_m(\cI) \ge \prod_{j=0}^{m-1}(\mu+t-Cj)\Bigr) \le \prod_{j=0}^{m-1} \frac{\mu}{\mu+t-Cj}.
\end{equation}
Set $m=\ceil{t/C} \ge 1$. 
If $\mu=0$, then $\Pr(Z_C \ge \mu +t) =0$ by \eqref{eq:C:Markov}, and \eqref{eq:C} is trivial, so we henceforth assume $\mu>0$.  
For $0 \le x \le t/C$, the function $f(x)=\log(\mu/(\mu+t-Cx))$ is increasing and satisfies $f(x) \le 0$. 
As $f(t/C)=0$, it follows that $f(j) \le \int_{j}^{\min\{j+1,t/C\}} f(x) dx$ for $0 \le j \le t/C$. 
We deduce 
\[
\log \Pr(Z_C \ge \mu +t)  \le 
\sum_{j=0}^{\ceil{t/C}-1} \log\left(\frac{\mu}{\mu+t-Cj}\right) \le 
\int_{0}^{t/C} \log\left(\frac{\mu}{\mu+t-Cx}\right) dx =: \Psi. 
\]
Using $\log(a/b)=\log a-\log b$, integration yields $\Psi = -\varphi\left(t/\mu\right)\mu/C$. It is well-known that 
\begin{equation}\label{eq:varphi:x1}
\varphi(x) \ge x^2/(2+2x/3)
\end{equation}
for $x \ge 0$ (see, e.g., the proof of Theorem~2.1 in~\cite{JLR}), so $\Psi \le -t^2/\bigl(2C(\mu+t/3)\bigr)$. 
Finally, for $u=t/(2C)$ we have $\Psi = \int_{0}^{t/C} f(x) dx \le \int_{0}^{u} f(x) dx \le u f(u)$, which establishes \eqref{eq:C}. 
\end{proof}
For all integers $x \ge 1$, by formally defining $xC=\mu+t$ and $m=x$ in the above proof (so that $\mu+t-Cj = C(x-j)$ holds), note that inequality~\eqref{eq:C:Markov} and Stirling's formula imply 
\begin{equation}\label{eq:C:ET}
\Pr(Z_C \ge xC) \le \left(\frac{\mu}{C}\right)^x/x! \le \left(\frac{e\mu}{xC}\right)^x / \sqrt{2 \pi x} .
\end{equation}
While this estimate is often weaker than~\eqref{eq:C}, for $C=1$ it extends, in the upper tail context, the so-called `disjointness lemma' of Erd{\H{o}}s and Tetali~\cite{disjoint}, see, e.g., Lemma~8.4.1 in~\cite{AS}. 
In the proof of Theorem~\ref{thm:C}, inequality~\eqref{eq:C:EZ} is the only step in which anything is assumed about the $Y_\alpha$, and independence is used in a limited way: $\E (\prod Y_{\alpha_i}) \le \prod \E (Y_{\alpha_i})$ suffices (in fact, replacing the assumption $\sum \E Y_{\alpha} \le \mu$ with $\sum \lambda_{\alpha} \le \mu$ and $\lambda_{\alpha} \ge 0$, it suffices if $\E (\prod Y_{\alpha_i}) \le \prod \lambda_{\alpha_i}$ holds). 
This suggests that the argument is rather robust, since, e.g., ad-hoc upper bounds for $\E (\prod Y_{\alpha_i})$ are enough to obtain tail inequalities, see the proof of Lemma~4.5 in~\cite{K4free}.
Finally, in~\eqref{eq:C:induct} there is also potential for relaxing $\max_{\beta \in \cJ}\sum_{\alpha \in \cJ: \alpha \sim \beta} Y_{\alpha} \le C$ to an accumulative condition (e.g., replacing~$Cs$ by~$t/2$).

The following variant of Theorem~\ref{thm:C} exploits the BK-inequality~\cite{BK} to further relax the independence assumption. 
Clearly, two events $\cE_1$, $\cE_2$ depending on disjoint sets of independent random variables are independent. 
For our purposes it intuitively suffices if, for each possible outcome $\omega \in \Omega$, we can `certify' the occurrence of $\cE_1$ and $\cE_2$ by disjoint sets of variables (which may depend on $\omega$). 
For $\omega=(\omega_1, \ldots, \omega_M) \in \Omega=\Omega_{1}\times \cdots\times\Omega_M$ and $K \subseteq [M]=\{1, \ldots, M\}$ we write ${\omega|}_K=(\omega_i)_{i \in K}$ and $[\omega]_{K} = \{\omega' \in \Omega: {\omega'|}_K = {\omega|}_K \}$. 
If $[\omega]_{K} \subseteq \cE$, then ${\omega|}_{K}$ is called a \emph{certificate} for the occurrence of the event $\cE$ (in words, $\cE$ occurs on all sample points that agree with $\omega$ restricted to $K$). 
Intuitively speaking, in Theorem~\ref{thm:CD} the random variable $Z$ counts the maximum number of events that `occur disjointly', i.e., have disjoint certificates. 
With this in mind, a key feature of inequalities \eqref{eq:C} and \eqref{eq:C:ET} is that they are dimension-free: they do \emph{not} involve the sizes of the certificates (in contrast to `certificate-based' variants of Talagrand's inequality such as Theorem~2 in~\cite{McDR2006}). 
\begin{theorem}\label{thm:CD}
Given a product space $\Omega=\Omega_{1}\times \cdots\times\Omega_M$, with finite $\Omega_i$, let $(\cE_{\alpha})_{\alpha \in \cI}$ be a family of events with $\sum_{\alpha \in \cI}\Pr(\cE_{\alpha}) \le \mu$. Let $Z=\max |\cJ|$, where the maximum is taken over all $\cJ \subseteq \cI$ for which there are disjoint $K_i \subseteq [M]$ satisfying $[\omega]_{K_i} \subseteq \cE_{\alpha_i}$ for all $\alpha_i \in \cJ$. 
Then~\eqref{eq:C} and~\eqref{eq:C:ET} hold with~$C=1$ and~$Z_1=Z$. 
\end{theorem}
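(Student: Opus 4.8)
The plan is to run the proof of Theorem~\ref{thm:C} essentially verbatim, with two substitutions: the ``forced independence'' moment $M_s(\cI)$ is replaced by a quantity counting disjointly occurring events, and the use of genuine independence in~\eqref{eq:C:EZ} is replaced by the BK-inequality. For events $\cE_1,\dots,\cE_s$ on $\Omega$ write $\cE_1\circ\cdots\circ\cE_s$ for the event that they \emph{occur disjointly}, i.e.\ that there are pairwise disjoint $K_1,\dots,K_s\subseteq[M]$ with $[\omega]_{K_i}\subseteq\cE_i$ for every $i\in[s]$. I would set
\[
N_s=\sum_{(\alpha_1,\dots,\alpha_s)\in\cI^{s}}\indic{\cE_{\alpha_1}\circ\cdots\circ\cE_{\alpha_s}},
\]
the natural analogue of $M_s(\cI)$: the indicator of disjoint occurrence simultaneously plays the role of the product $\prod_i Y_{\alpha_i}$ and of the restriction $\sum^{*}$ to tuples with $\alpha_i\not\sim\alpha_j$.

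First I would bound $\E N_s$. Since $\Omega$ is a finite product space, Reimer's form of the van den Berg--Kesten inequality~\cite{BK,BKR} applies to the arbitrary events $\cE_\alpha$, and, iterated using the associativity of $\circ$, gives $\Pr(\cE_{\alpha_1}\circ\cdots\circ\cE_{\alpha_s})\le\prod_{i\in[s]}\Pr(\cE_{\alpha_i})$ for \emph{every} tuple $(\alpha_1,\dots,\alpha_s)$. Summing over all tuples,
\[
\E N_s\le\sum_{(\alpha_1,\dots,\alpha_s)\in\cI^{s}}\prod_{i\in[s]}\Pr(\cE_{\alpha_i})=\Bigl(\sum_{\alpha\in\cI}\Pr(\cE_\alpha)\Bigr)^{s}\le\mu^{s},
\]
which is exactly the analogue of~\eqref{eq:C:EZ} with $C=1$.

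Next I would produce the deterministic bound on the event $\{Z\ge\mu+t\}$. On this event fix a witness $\cJ\subseteq\cI$ with $|\cJ|=Z\ge\mu+t$ together with pairwise disjoint certificates $(K_\alpha)_{\alpha\in\cJ}$, so $[\omega]_{K_\alpha}\subseteq\cE_\alpha$. For any tuple of \emph{distinct} $\alpha_1,\dots,\alpha_s\in\cJ$ the sets $K_{\alpha_1},\dots,K_{\alpha_s}$ are pairwise disjoint, hence $\cE_{\alpha_1}\circ\cdots\circ\cE_{\alpha_s}$ occurs; so, for every integer $m$ with $1\le m\le\ceil{\mu+t}$ (which forces $m\le|\cJ|$),
\[
N_m\ge|\cJ|\,(|\cJ|-1)\cdots(|\cJ|-m+1)\ge\prod_{j=0}^{m-1}(\mu+t-j),
\]
each factor on the right being positive since $j\le m-1<\mu+t$. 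This replaces the induction leading to~\eqref{eq:C:induct}; the value $C=1$ appears because, the sets $K_\alpha$ being pairwise disjoint, each certificate ``conflicts'' only with itself.

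Finally, combining the previous two displays with Markov's inequality reproduces~\eqref{eq:C:Markov} with $C=1$,
\[
\Pr(Z\ge\mu+t)\le\Pr\Bigl(N_m\ge\prod_{j=0}^{m-1}(\mu+t-j)\Bigr)\le\prod_{j=0}^{m-1}\frac{\mu}{\mu+t-j}
\]
(the case $\mu=0$ being trivial exactly as in Theorem~\ref{thm:C}). From here the remainder of the proof of Theorem~\ref{thm:C} applies verbatim: with $m=\ceil{t}$ the integral estimates give~\eqref{eq:C} with $C=1$ and $Z$ in place of $Z_1$, and setting instead $\mu+t=x$ an integer with $m=x$ and invoking Stirling's formula gives~\eqref{eq:C:ET} with $C=1$, as in the remark following that proof. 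I expect the only real subtlety to be the bookkeeping around the BK-inequality --- verifying that $\cE_{\alpha_1}\circ\cdots\circ\cE_{\alpha_s}$ is genuinely the $s$-fold ``box product'' on the finite product space $\Omega$, so that Reimer's theorem and its iteration via associativity are legitimate --- together with the (routine) matching between the set-valued witness $\cJ$ defining $Z$ and the tuple-indexed sum defining $N_s$, for which restricting the disjoint family $(K_\alpha)_{\alpha\in\cJ}$ to sub-collections suffices (repeated indices in $N_s$ cost nothing, since they keep the clean identity $\sum\prod\Pr=(\sum\Pr)^s$ above and are simply not used in the lower bound).
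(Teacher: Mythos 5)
Your proposal matches the paper's proof almost step for step: a moment-like quantity counting disjoint occurrences, Reimer's inequality to bound its expectation by $\mu^s$, a combinatorial lower bound $\prod_{j<m}(\mu+t-j)$ from the witness $\cJ$, and then Markov plus the tail end of the proof of Theorem~\ref{thm:C}. The one place where the paper is more careful than you: it does \emph{not} invoke associativity of $\square$ (indeed it explicitly remarks that $\square$ ``need not be associative'' for general events), instead working with the left-nested operation $\cD(\alpha_1,\ldots,\alpha_m)=((\cE_{\alpha_1}\square\cE_{\alpha_2})\square\cdots)\square\cE_{\alpha_m}$, to which the two-event Reimer inequality applies by induction. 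Your appeal to ``associativity of $\circ$'' is unnecessary and, as stated, not justified; what you actually need is the easy containment $\cE_{\alpha_1}\circ\cdots\circ\cE_{\alpha_s}\subseteq(\cE_{\alpha_1}\circ\cdots\circ\cE_{\alpha_{s-1}})\square\cE_{\alpha_s}$ (take the union of the first $s-1$ certificates as a certificate for the left factor), which lets you iterate the binary Reimer inequality and yields $\Pr(\cE_{\alpha_1}\circ\cdots\circ\cE_{\alpha_s})\le\prod_i\Pr(\cE_{\alpha_i})$ without any associativity claim. With that small repair the argument is complete and correct.
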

\begin{remark}\label{rem:CD}
Theorem~\ref{thm:CD} remains valid after weakening the product space assumption: restricting to increasing events $\cE_{\alpha} \subseteq \Omega=\{0,1\}^M$, it suffices if $\Pr$ satisfies the BK-inequality~\eqref{eq:BKR} for increasing events (in this case $\square$ is associative, so we may replace~$Z$ by the maximum of~$|\cJ|$ over all $\cJ \subseteq \cI$ for which $\square_{\alpha \in \cJ}\cE_\alpha$ holds). 
\end{remark}
The proof of Theorem~\ref{thm:CD} is based on the BK-inequality, which is a partial converse to Harris' inequality~\cite{Harris1960}. 
Intuitively, $\cA \square \cB$ means that the events $\cA$ and $\cB$ have disjoint certificates. Formally, we define \[
\cA \square \cB = \{\omega \in \Omega: \ \text{there are disjoint $K,L \subseteq [M]$ such that $[\omega]_{K} \subseteq \cA$ and $[\omega]_{L} \subseteq \cB$}\} ,
\]
which need not be associative. 
The general BK-inequality of Reimer~\cite{BKR} states that for any product space $\Omega=\Omega_{1}\times \cdots\times\Omega_M$, with finite $\Omega_i$, the following holds: for any two events $\cA,\cB \subseteq \Omega$ we have 
\begin{equation}\label{eq:BKR}
\Pr(\cA \square \cB) \le \Pr(\cA) \Pr(\cB) .
\end{equation}
\begin{proof}[Proof of Theorem~\ref{thm:CD}] 
The proof uses a $\square$-based variant of the $m$-th moment (inspired by Theorem~\ref{thm:C}). 
For all $m \in \NN$ we define  $\cD(\alpha_1, \ldots, \alpha_m) = ((\cdots (\cE_{\alpha_1} \square\cE_{\alpha_2})\square \cdots)\square\cE_{\alpha_{m-1}}) \square \cE_{\alpha_m}$ and 
\[
M_{m}(\cK) = \sum_{(\alpha_1, \ldots, \alpha_m)\in \cK^m}  \indic{\cD(\alpha_1, \ldots, \alpha_m)}.
\] 
Using the BK-inequality~\eqref{eq:BKR} inductively, we obtain $\Pr(\cD(\alpha_1, \ldots, \alpha_m)) \le \prod_{i \in [m]} \Pr(\cE_{\alpha_i})$. 
So, analogous to~\eqref{eq:C:EZ}, we deduce $\E M_{m}(\cI) \le \mu^m$. 
Now assume that $Z \ge y$ and $Z=|\cJ|$ hold. 
For each $m \le \ceil{y} \le |\cJ|$, by definition of $Z$ we see that $\cD(\alpha_1, \ldots, \alpha_m)$ occurs for all $m$-element subsets $\{\alpha_1, \ldots, \alpha_m\} \subseteq \cJ$. 
Hence 
\[
M_{m}(\cI) \ge M_{m}(\cJ) \ge \binom{|\cJ|}{m}m! \ge \prod_{j=0}^{m-1}(y-j).
\] 
Let $Z_1=Z$ and $C=1$. With $y=\mu+t$, the proof of Theorem~\ref{thm:C} carries over unchanged from \eqref{eq:C:Markov} onwards, and \eqref{eq:C} follows.  
Similarly, with $y=x$, $m=x$ and $\mu+t=x$, \eqref{eq:C:Markov} establishes \eqref{eq:C:ET}.    
\end{proof}
The sufficient condition of Remark~\ref{rem:CD} has recently been established in~\cite{BKkoutn} for $\Pr$ assigning equal probability to all $\omega \in \{0,1\}^M$ with exactly $k$ ones. 
Hence Theorem~\ref{thm:CD} applies to $G_{n,m}$ and related uniform models.

\section{Upper bounds}\label{sec:UB} 
In this section we establish the upper bounds~\eqref{eq:thm:HGp:UB} and~\eqref{eq:thm:HG:UB} of Theorem~\ref{thm:HGp} and~\ref{thm:HG}. 
The executive summary of our 
proof strategy is as follows: using combinatorial arguments we shall approximate $X=e(\cH_p)$ using several `well-behaved' auxiliary random variables, which we in turn estimate by the concentration inequalities of Section~\ref{sec:C}. 
Of course, the actual details are much more involved, and our arguments in fact develop  
combinatorial and probabilistic ideas of 
the `deletion method'~\cite{DL} and the `approximating by a disjoint subfamily' technique~\cite{Spencer1990,UT}.  
We have added a substantial amount of informal discussion and motivation to the remainder of this section, in an attempt to make the underlying ideas and techniques more accessible (the actual proofs could be recorded in a much shorter way). 
For example, in order to milden some of the technical difficulties, we shall not only informally discuss the intriguing $\log(e/p)$ factors in the exponent, 
but also prove~\eqref{eq:thm:HGp:UB} using a simplified version our arguments (instead of proving~\eqref{eq:thm:HGp:UB} and~\eqref{eq:thm:HG:UB} in a unified way).

The remainder of this section is organized as follows. 
In Section~\ref{sec:m} we motivate parts of our proof strategy, and illustrate how logarithmic terms arise in our tail estimates. 
In Section~\ref{sec:CA} we then present our basic proof framework, and establish the upper bound of Theorem~\ref{thm:HGp}. 
Finally, in Section~\ref{sec:CO} we refine the aforementioned framework, and prove the more involved upper bound of Theorem~\ref{thm:HG}.

\subsection{Warming up}\label{sec:m} 
The upper bounds of Theorem~\ref{thm:HGp} and~\ref{thm:HG} involve \emph{exponentially} small probabilities, so error probabilities of form~$o(1)$ are too crude for our purposes (and the proofs require more care). 
In fact, the exponents in~\eqref{eq:thm:HGp:UB} and~\eqref{eq:thm:HG:UB} are fairly involved, and both contain somewhat unusual $\log(e/p)$ terms. 
With these non-standard features in mind, the goals of this informal section are two-fold: (i)~to motivate some details of our upcoming proof strategy, and (ii)~to illustrate the way in which we eventually obtain the $\log(e/p)$ factors.

\subsubsection{Motivation and preliminaries}\label{sec:mot}
Let us start with a basic estimate for the number of induced edges $X=e(\cH_p)$. 
For brevity we set 
\begin{equation*}\label{eq:gammav}
\Gamma_v(\cG) = \{f \in \cG: v \in f\} ,
\end{equation*}
so that $|\Gamma_v(\cH_p)|$ equals the degree of vertex~$v$ in $\cH_p$. 
Clearly, for all $r > 0$ we have 
\begin{equation}\label{eq:PPinduct}
\begin{split}
\Pr(X \ge \mu+t) & \le \Pr(X \ge \mu+t \text{ and } \Delta_1(\cH_p) \le r) + \Pr(\Delta_1(\cH_p) > r) \\
& \le \Pr(X \ge \mu+t \text{ and } \Delta_1(\cH_p) \le r) + \sum_{v \in V(\cH)}\Pr(|\Gamma_v(\cH_p)| >r).
\end{split}
\end{equation}
A similar decomposition forms the basis of the inductive  `deletion method' of Janson and Ruci{\'n}ski~\cite{DL}, see, e.g., Theorem~2.5 and Section~3 in~\cite{DL}.
The inductive approach of Kim and Vu~\cite{KimVu2000} is also based on a related idea, see, e.g., Section~3.2 in~\cite{Vu2002}.

One bottleneck of the above approach~\eqref{eq:PPinduct} is that it relies on a uniform upper bound on the degree of all vertices.  
We shall rectify this issue via the following sparsification strategy (which allows for some vertices with larger degrees): 
we first decrease the maximum degree of~$\cH_p$ by removing some carefully chosen edges, 
and then estimate the number of \emph{remaining} edges via the Chernoff-type tail inequality~Theorem~\ref{thm:C}. 
In other words, our plan is to first apply further combinatorial arguments to~$\cH_p$, before using any probabilistic tail estimates or induction.   
An embryonic version of this idea is contained in the `approximating by a disjoint subfamily' technique of Spencer~\cite{Spencer1990,UT}, but Janson and Ruci{\'n}ski argued in their upper tail survey~\cite{UT} that this technique is `never better' than the `deletion method'~\cite{DL} (see Remark~2 in Section~2.3.4 and Example~7 in Section~3.2 of~\cite{UT}). 
In Sections~\ref{sec:CA}--\ref{sec:CO} we shall, in some sense, crossbred ideas of both approaches to go one step further.

\subsubsection{Extra logarithmic factors in tail estimates?}\label{sec:log} 
Let us illustrate how extra logarithmic factors can arise in our upper tail estimates. 
To this end we shall now have, in the context of Theorem~\ref{thm:HGp}, a heuristic look at the exponential decay of the degrees $|\Gamma_v(\cH)|$. 
Here the key observation is that the dependencies among the edges in $\Gamma_v(\cH_p) \subseteq \Gamma_v(\cH)$ are severely limited by the codegree condition $\Delta_2(\cH) = O(1)$: for every $e \in \Gamma_v(\cH)$ there are only at most $k \Delta_2(\cH) = O(1)$ edges $f \in \Gamma_v(\cH)$ which intersect $e \setminus \{v\}$, i.e., with $(f \cap e) \setminus \{v\} \neq \emptyset$ (because all such~$f$ contain~$v$ and at least one vertex from $e \setminus \{v\}$). 
As~$\cH$ is $k$-uniform, it thus seems plausible that, conditioned on~$v \in V_p(\cH)$,  
the upper tail of~$|\Gamma_v(\cH_p)|$ decays roughly like a binomial random variable $Y \sim \Bin(|\Gamma_v(\cH)|,p^{k-1})$. 
Note that for all positive integers~$x$, we have 
\begin{equation}\label{eq:heur:Binx}
\Pr\bigl(Y \ge x\bigr) \le \binom{|\Gamma_v(\cH)|}{x} p^{(k-1)x} \le \frac{\bigl(|\Gamma_v(\cH)|p^{k-1}\bigr)^{x}}{x!} \le \left(\frac{O(np^{k-1})}{x}\right)^x , 
\end{equation}
where we used $|\Gamma_v(\cH)| \le |V(\cH)| \cdot \Delta_2(\cH)= O(n)$ for the last inequality. 
As expected, the decay of $|\Gamma_v(\cH_p)|$ turns out to be very similar to~\eqref{eq:heur:Binx}. 
Indeed, ignoring a number of technicalities, we later approximately show 
(see~\eqref{eq:Mrh:Phir} in the proof of Lemma~\ref{lem:MrH}) 
that for a certain range of~$x$ we have 
\begin{equation}\label{eq:heur:maxd}
\Pr(\Delta_1(\cH_p) \ge x) \le \sum_{v \in V(\cH)}\Pr(|\Gamma_v(\cH_p)| \ge x) \le \left(\frac{O(np^{k-1})}{x}\right)^{\Theta(x)}  .
\end{equation}
With this in mind, the basic idea for `extra' logarithmic terms is simple: if $x \gg y np^{k-1}$ holds, then \eqref{eq:heur:maxd} suggests $\Pr(\Delta_1(\cH_p) \ge x) \le \exp\bigl(-\Theta(x \log  y)\bigr)$. 
In words, if the deviation~$x$ `overshoots' the expectation $|\Gamma_v(\cH)|p^{k-1} = O(n p^{k-1})$ significantly, then we should win a logarithmic factor in the exponent.

In Sections~\ref{sec:CA}--\ref{sec:CO} we shall exploit the aforementioned `overshooting' phenomenon for a range of different degrees (to intuitively show that there are not too many vertices with high degrees). 
Of course, using this approach we shall eventually need to check a number of technical conditions such as $np^{k-1}/x = O\bigl(p^{\Theta(1)}\bigr)$: 
these are \emph{key} for obtaining the $\log(e/p)$ factors missing in previous work 
of Janson and Ruci{\'n}ski~\cite{UTAP}.

\subsection{Basic proof framework}\label{sec:CA}  
In this section we introduce our basic proof framework (for arbitrary hypergraphs~$\cH$), which seems of independent interest. 
In the combinatorial part we implement the sparsification idea mentioned in Section~\ref{sec:mot}, and essentially show the number of induced edges $X=e(\cH_p)$ can be estimated via two carefully defined auxiliary random variables~$X_r=X_r(\cH_p)$ and~$M_r=M_r(\cH_p)$. 
In the probabilistic part we systematically obtain upper tail estimates for~$X_r$ and~$M_{r}$, by exploiting the Chernoff-type concentration inequalities of Section~\ref{sec:C}. 
Finally, we demonstrate the applicability of this framework by proving the upper bound of Theorem~\ref{thm:HGp}.

Recall that our strategy is to decrease the maximum degree of~$\cH_p$ by removing edges. 
To estimate the upper tail of the remaining edges, we now introduce the following `smooth approximation' of~$X=e(\cH_p)$: 
\begin{equation}\label{def:Xr}
X_r = \max\bigl\{e(\cG) : \ \text{$\cG \subseteq \cH_{p}$ and $\Delta_1(\cG) \le r$} \bigr\} .
\end{equation} 
In words, $X_r=X_r(\cH_p)$ denotes the maximum number of edges in \emph{any} subhypergraph $\cG \subseteq \cH_{p}$ with maximum degree at most~$r$. 
Via Theorem~\ref{thm:C} this `bounded degree' property eventually yields~\eqref{eq:Xr}, i.e, a general upper tail estimate for $X_r$. 
For $\eps=\Theta(1)$ and $k= \Theta(1)$, note that~\eqref{eq:Xr} yields $\Pr(X_r \ge (1+\eps/2)\mu) \le \exp(-\Theta(\mu/r))$. 
\begin{lemma}\label{lem:Xr}
Suppose that $\cH$ satisfies $\max_{f \in \cH} |f| \le k$.   
Set $X=e(\cH_p)$, $\mu = \E X$ and $\varphi(x)=(1+x)\log(1+x)-x$. 
Then, for all $p \in [0,1]$ and $r,t > 0$ we have 
\begin{equation}\label{eq:Xr}
\Pr(X_r \ge \mu+t/2) \le \exp\left(-\frac{\varphi(t/\mu) \mu}{4kr} \right) \le \exp\left(-\frac{\min\{t,t^2/\mu\}}{12kr} \right) . 
\end{equation}
\end{lemma}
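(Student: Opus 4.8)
The plan is to apply Theorem~\ref{thm:C} to a suitable family of indicator variables built from the edges of~$\cH$, choosing the relation $\sim$ so that the Lipschitz-type parameter~$C$ is controlled by the degree bound~$r$. Concretely, I would set $\cI = \cH$ and $Y_f = \indic{f \subseteq V_p(\cH)}$ for each edge $f \in \cH$, so that $X = e(\cH_p) = \sum_{f \in \cI} Y_f$ and $\sum_{f \in \cI} \E Y_f = e(\cH) p^{|f|} = \mu$ (here one uses $\max_{f \in \cH}|f| \le k$ only later). Define $f \sim g$ whenever $f \cap g \neq \emptyset$; since $Y_f$ depends only on the coordinates $(\mathbbm{1}_{\{v \in V_p(\cH)\}})_{v \in f}$, the independence hypothesis of Theorem~\ref{thm:C} holds: if $f \not\sim g$ then $f$ and $g$ are disjoint and $Y_f$, $Y_g$ are functions of disjoint blocks of independent coordinates.

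The key step is to relate $X_r$ to the variable $Z_C$ of Theorem~\ref{thm:C} with $C = kr$. Given a subhypergraph $\cG \subseteq \cH_p$ with $\Delta_1(\cG) \le r$, take $\cJ = E(\cG)$; then for any edge $g \in \cJ$ we have
\[
\sum_{f \in \cJ: f \sim g} Y_f = |\{f \in \cJ : f \cap g \neq \emptyset\}| \le \sum_{v \in g} |\Gamma_v(\cG)| \le |g| \cdot r \le kr,
\]
using $\Delta_1(\cG) \le r$ and $|g| \le k$. Hence $\cJ$ is an admissible index set in the definition of $Z_{kr}$, and since all edges of $\cG$ lie in $\cH_p$ we get $Y_f = 1$ for $f \in \cJ$, so $e(\cG) = |\cJ| = \sum_{f \in \cJ} Y_f \le Z_{kr}$. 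Taking the maximum over all such $\cG$ yields $X_r \le Z_{kr}$, and therefore $\Pr(X_r \ge \mu + t/2) \le \Pr(Z_{kr} \ge \mu + t/2)$.

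Now I would invoke inequality~\eqref{eq:C} of Theorem~\ref{thm:C} with $C = kr$ and deviation $t/2$ in place of $t$, giving
\[
\Pr(Z_{kr} \ge \mu + t/2) \le \exp\!\left(-\frac{\varphi\bigl(t/(2\mu)\bigr)\mu}{kr}\right).
\]
To reach the stated bound I would use the elementary convexity-type estimate $\varphi(x/2) \ge \varphi(x)/4$ for $x \ge 0$ (which follows since $\varphi$ is convex with $\varphi(0)=\varphi'(0)=0$, so $\varphi(x/2) \ge \tfrac14\varphi(x)$), yielding the first inequality of~\eqref{eq:Xr}. The second inequality of~\eqref{eq:Xr} then follows from the standard lower bounds on $\varphi$: from~\eqref{eq:varphi:x1} we have $\varphi(x) \ge x^2/(2+2x/3) \ge \min\{x^2, x\}/c_0$ for a suitable absolute constant, and feeding $x = t/\mu$ through gives $\varphi(t/\mu)\mu \ge \min\{t, t^2/\mu\}/3$, hence the factor $1/(12kr)$.

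The main obstacle, such as it is, is purely bookkeeping: verifying that the reduction $X_r \le Z_{kr}$ is valid even in degenerate cases (e.g.\ $\mu = 0$, or edges of size less than $k$, or $r$ not an integer) and nailing down the precise elementary constants so that $\varphi(t/(2\mu))/(kr) \ge \varphi(t/\mu)/(4kr)$ and $\varphi(t/\mu)\mu/(4kr) \ge \min\{t,t^2/\mu\}/(12kr)$ both hold for all $t, r > 0$ without hidden assumptions. The conceptual content is entirely in the choice of $\sim$ and the observation that bounded degree of $\cG$ translates into the admissibility constraint $\max_{g \in \cJ}\sum_{f \sim g} Y_f \le kr$ of Theorem~\ref{thm:C}; once that is in place, the tail bound is immediate from~\eqref{eq:C}.
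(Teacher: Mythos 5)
Your proof takes the same route as the paper: identify $X_r$ with the quantity $Z_{kr}$ of Theorem~\ref{thm:C} (with $Y_f=\indic{f\subseteq V_p(\cH)}$ and $e\sim f$ iff $e\cap f\neq\emptyset$), apply~\eqref{eq:C} with deviation $t/2$, and convert $\varphi(t/(2\mu))$ to $\varphi(t/\mu)/4$ and then to $\min\{t,t^2/\mu\}$. The reduction $X_r\le Z_{kr}$ is carried out correctly, and the degenerate cases you flag (non-integer $r$, $\mu=0$, edges of size less than $k$) are all absorbed by Theorem~\ref{thm:C} itself, exactly as the paper relies on.

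The one genuine gap is in your justification of the elementary inequality $\varphi(x/2)\ge\varphi(x)/4$. You claim it ``follows since $\varphi$ is convex with $\varphi(0)=\varphi'(0)=0$,'' but that premise does \emph{not} imply the conclusion: the function $x\mapsto x^4$ is convex with $f(0)=f'(0)=0$, yet $f(x/2)=x^4/16<x^4/4=f(x)/4$. (Convexity with these boundary conditions actually gives you the \emph{upper} bound $\varphi(x/2)\le\varphi(x)/2$, not the lower bound you need.) The property that really drives the inequality is that $\varphi(x)/x^2$ is nonincreasing on $(0,\infty)$, which is a strictly stronger fact than convexity. The paper proves the inequality directly by computing that $f(x)=\varphi(x/2)-\varphi(x)/4$ has $4f'(x)=\log\bigl((1+x/2)^2/(1+x)\bigr)\ge0$ and $f(0)=0$; alternatively one can check $x\varphi'(x)\le2\varphi(x)$, which gives that $\varphi(x)/x^2$ is decreasing. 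Replace your convexity argument by one of these and the proof is complete; everything else is sound.

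A small cosmetic slip: you wrote $\sum_{f\in\cI}\E Y_f=e(\cH)p^{|f|}$, which is only literal when every edge has the same size. What is actually used, and what is true, is just $\sum_{f\in\cI}\E Y_f=\E X=\mu$ by linearity.
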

The main observation required to deduce Lemma~\ref{lem:Xr} from Theorem~\ref{thm:C} is that every edge $f \in \cG \subseteq \cH$ is incident to at most $k \Delta_1(\cG)$ other edges of~$\cG$. 
This allows us to bring the Lipschitz-like condition of Theorem~\ref{thm:C} into play (with $C=kr$). 
\begin{proof}[Proof of Lemma~\ref{lem:Xr}]
Defining $Y_{f} = \indic{f \subseteq V_p(\cH)}$,  
we have $\sum_{f \in \cH} \E Y_f=\E X=\mu$. 
Set $e \sim f$ if $e \cap f \neq \emptyset$. Hence, by the discussion preceding Theorem~\ref{thm:C}, the independence assumption of Theorem~\ref{thm:C} holds (here the $\xi_i=\indic{i \in V_p(\cH)}$ are independent indicators, so $Y_f = \prod_{i \in f}\xi_i$).  
Observe that for all $f \in \cG \subseteq \cH$ we have  
\[
\sum_{e \in \cG: e \sim f} Y_e \le \sum_{v \in f}\sum_{e \in \cG: v \in e} Y_e \le |f| \cdot \max_{v \in f}|\Gamma_v(\cG)| \le k \Delta_1(\cG). 
\]
Hence, for $C=kr$ we deduce $X_r \le Z_C$, where $Z_C$ is defined as in Theorem~\ref{thm:C} with $\cI=\cH$. So, using \eqref{eq:C},
\[
\Pr(X_r \ge \mu+t/2) \le \Pr(Z_C \ge \mu+t/2) \le \exp\left(-\frac{\varphi\bigl(t/(2\mu)\bigr) \mu}{kr} \right) ,
\]
and it remains to rewrite this estimate. 
Since \eqref{eq:varphi:x1} implies (by distinguishing the cases $x \ge 1$ and $x \le 1$) that
\begin{equation}\label{eq:varphi:x4}
\varphi(x) \ge \min\{x,x^2\}/3,
\end{equation}
we see that \eqref{eq:Xr} follows if $\varphi(t/(2\mu)) \ge \varphi(t/\mu)/4$. 
To sum up, it suffices to prove that
\begin{equation}\label{eq:varphi:x2}
\varphi(x/2) \ge \varphi(x)/4 
\end{equation}
for $x \ge 0$. To this end we consider $f(x) = \varphi(x/2)-\varphi(x)/4$. Now, for $x \ge 0$ we have $4f'(x) = \log\bigl((1+x/2)^2/(1+x)\bigr) \ge 0$, so that $f(x) \ge f(0)=0$, 
completing the proof. 
\end{proof}

Our sparsification strategy intuitively focuses on high-degree vertices (with degree at least~$r$).  
To quantify the number of removed edges, we shall introduce 
the auxiliary variable~$M_r=M_r(\cH_p)$, which essentially counts high-degree vertices with `disjoint certificates' (in the sense of Section~\ref{sec:C}). 
More precisely, we call $S=(v,W)$ an \emph{$r$-star} in $\cG$ if $W = \{f_1, \ldots, f_{\ceil{r}}\} \subseteq \Gamma_v(\cG)$ and $|W|=\ceil{r}$. 
We write $V(S) =  \bigcup_{1 \le i \le {\ceil{r}}} f_i$, which contains all vertices of the $r$-star~$S$. 
Note that $V(S) \subseteq V_p(\cH)$ implies $|\Gamma_v(\cH_p)|\ge \ceil{r}$, i.e., that vertex~$v$ has degree at least~$\ceil{r}$.  
Writing $\cT_r(\cG)$ for the collection of all $r$-stars $S=(v,W)$ in $\cG$, we define 
\begin{equation}\label{def:Mr}
\begin{split}
M_r(\cG) &= \max \bigl\{|\cM| : \ \text{$\cM \subseteq \cT_{r}(\cG)$ and $V(S_1) \cap V(S_2) = \emptyset$ for all distinct $S_1,S_2 \in \cM$}\bigr\} .
\end{split}
\end{equation}
In words, $M_r(\cH_p)$ denotes the size of the largest vertex disjoint collection of $r$-stars in $\cH_p$, i.e., $r$-star matching.
(As indicated earlier, it might be useful to think of $M_r(\cH_p)$ as the maximum number of degree~$\ge r$ vertices that `occur disjointly'.) 
For future reference we note the following basic relation between $\Delta_1(\cH_p)$ and $M_r(\cH_p)$. 
\begin{lemma}\label{obs:MrT}
Given $\cH$, for all $p \in [0,1]$ and $z > 0$ we have $\Pr(\Delta_1(\cH_p) \ge z) = \Pr(M_{z}(\cH_p) \ge 1)$. \noproof 
\end{lemma}
The following combinatorial lemma is at the heart of our basic sparsification strategy: it intuitively relates $X=e(\cH_p)$ with the auxiliary random variables $X_r$ and $M_r(\cH_p)$. 
In fact, inequality~\eqref{eq:approx:basic} below 
is inspired by the main deterministic ingredient of the `approximating by a disjoint subfamily' technique 
(see, e.g., Lemma~3 in~\cite{UT}, which is used to count \emph{vertices} in an auxiliary graph with $V(G)=\cH_p$).  
While Spencer's technique hinges on the fact that disjoint edges are nearly independent (see also~\cite{Spencer1990,disjoint}), here one 
important conceptual difference is that we allow for dependencies, i.e., overlaps of the edges (via $r \ge 2$ in~$X_r$). 
For our applications the crux of~\eqref{eq:approx:basic} is that $X_{r} < (1+\eps/2)\mu$ and $k \ceil{r} M_r(\cH_p) \Delta_1(\cH_p) < \eps\mu/2$ together imply $X < (1+\eps)\mu$. 
\begin{lemma}\label{lem:approx:basic}
Suppose that $\cH$ satisfies $\max_{f \in \cH} |f| \le k$. 
Then, for all $p \in [0,1]$ and $r > 0$ we have 
\begin{equation}\label{eq:approx:basic}
X_{r} \leq X \leq X_{r} + \indic{\Delta_1(\cH_p) > r} k \ceil{r} M_r(\cH_p) \Delta_1(\cH_p) .
\end{equation}
\end{lemma}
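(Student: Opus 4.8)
The plan is to prove the two inequalities in \eqref{eq:approx:basic} separately. The lower bound $X_r \le X$ is immediate from the definition \eqref{def:Xr}: take $\cG = \cH_p$ itself if $\Delta_1(\cH_p) \le r$, or otherwise note that the empty subhypergraph (or any bounded-degree subhypergraph) has at most $X$ edges, so in all cases $X_r \le e(\cH_p) = X$. Actually the cleanest statement is that any $\cG \subseteq \cH_p$ has $e(\cG) \le e(\cH_p) = X$, hence the maximum defining $X_r$ is at most $X$.

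For the upper bound $X \le X_r + \indic{\Delta_1(\cH_p) > r}\, k\ceil{r} M_r(\cH_p)\Delta_1(\cH_p)$, I would first dispose of the trivial case: if $\Delta_1(\cH_p) \le r$, then $\cH_p$ itself is an admissible $\cG$ in \eqref{def:Xr}, so $X = e(\cH_p) \le X_r$ and we are done. So assume $\Delta_1(\cH_p) > r$. The strategy is to construct, greedily, a subhypergraph $\cG \subseteq \cH_p$ with $\Delta_1(\cG) \le r$ by repeatedly deleting edges incident to high-degree vertices, and to bound the number of deleted edges by $k\ceil{r} M_r(\cH_p)\Delta_1(\cH_p)$. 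Concretely: as long as some vertex $v$ has degree $> r$ (equivalently $\ge \ceil{r}$ in the integer-degree setting, after noting $r$ may be non-integral — here one should be slightly careful and phrase things via $\ceil{r}$) in the current hypergraph, pick such a $v$, record an $\ceil{r}$-star $S = (v, W)$ at $v$ (choosing $W$ to be $\ceil{r}$ of its incident edges), and delete \emph{all} edges incident to $v$ from the current hypergraph. Each such deletion step removes at most $\Delta_1(\cH_p)$ edges (the degree in $\cH_p$ dominates the degree at every stage). The key combinatorial observation is that the stars $S$ recorded at distinct chosen vertices $v_1, v_2, \ldots$ need not be vertex-disjoint, but one can extract a vertex-disjoint subcollection of size at least (number of steps)$/(k\ceil{r})$: indeed, each star spans at most $k\ceil{r}$ vertices (it has $\ceil{r}$ edges, each of size $\le k$), so by a standard greedy argument on the "intersection graph" of the recorded stars — whose maximum degree is at most $k\ceil{r}$ minus something — a greedy independent set has size at least (number of stars)$/(k\ceil{r})$. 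Since a vertex-disjoint collection of $\ceil{r}$-stars in $\cH_p$ has size at most $M_r(\cH_p)$ by definition \eqref{def:Mr}, the number of deletion steps is at most $k\ceil{r} M_r(\cH_p)$, and hence the total number of deleted edges is at most $k\ceil{r} M_r(\cH_p)\Delta_1(\cH_p)$. The resulting $\cG$ has $\Delta_1(\cG) \le r$, so $e(\cG) \le X_r$, and therefore $X = e(\cH_p) \le X_r + k\ceil{r}M_r(\cH_p)\Delta_1(\cH_p)$, which is \eqref{eq:approx:basic} on the event $\Delta_1(\cH_p) > r$.

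The main obstacle I anticipate is making the greedy extraction of a vertex-disjoint star subcollection fully rigorous with the right constant $k\ceil{r}$: one must argue that when a vertex $v$ is selected, the star recorded at $v$ can intersect the vertex sets of only "few" previously or subsequently recorded stars, and the accounting has to be set up so that each \emph{step} is charged to a star in a vertex-disjoint family. A clean way is to process the recorded stars in order and greedily keep a star iff its vertex set is disjoint from all previously kept ones; each kept star "blocks" at most $k\ceil{r}$ vertices, and since every recorded star that is \emph{not} kept must share a vertex with some kept star, and a kept star has $\le k\ceil{r}$ vertices each lying in boundedly many recorded stars — here one needs the bound that any fixed vertex lies in the vertex set of at most ... recorded stars, which follows since the recorded vertices $v_1, v_2, \ldots$ are distinct and each contributes one star. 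This bookkeeping is the only delicate point; everything else is routine.
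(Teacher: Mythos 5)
The lower bound $X_r\le X$ is correct and trivial, as in the paper. For the upper bound, however, you take a genuinely different route from the paper and your route has a real gap. Your plan is to greedily delete, one high-degree vertex $v_i$ at a time, all edges incident to $v_i$, and then to argue that the number of deletion steps $m$ satisfies $m\le k\ceil{r}M_r(\cH_p)$. You flag the charging argument for this as ``the only delicate point'', and it does in fact fail: a fixed vertex $u$ can lie in $V(S_i)$ for up to $\Delta_1(\cH_p)$ of the recorded stars (the edges witnessing $u\in V(S_i)$ are pairwise distinct, which bounds the count by $\deg_{\cH_p}(u)$ but by nothing smaller), so each kept star can be charged by up to $k\ceil{r}\Delta_1(\cH_p)$ recorded stars, giving only $m\le\bigl(1+k\ceil{r}\Delta_1(\cH_p)\bigr)M_r(\cH_p)$ and hence a bound on the removed edges that is off by a factor of $\Delta_1(\cH_p)$. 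Worse, the claim $m\le k\ceil{r}M_r(\cH_p)$ is simply false: take $k=3$, $r=1$, and the $3$-uniform hypergraph on $\{w,x\}\cup\{a_i,b_i:i\in[n]\}$ with edges $\{w,a_i,b_i\}$ and $\{x,a_i,b_i\}$ for $i\in[n]$. Here every edge meets $\{w,x\}$, so $M_1=2$, but if the greedy always picks some $a_i$ (each has degree~$2>r$) then it runs for $m=n$ steps, and $n>k\ceil{r}M_1=6$ once $n\ge 7$.

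The paper avoids this entirely by doing everything in one step. Fix a \emph{maximum} vertex-disjoint collection $\cM$ of $\ceil{r}$-stars in $\cH_p$, so $|\cM|=M_r(\cH_p)$, and delete from $\cH_p$ every edge that meets $V(S)$ for some $S\in\cM$; call the result $\cG$. If some vertex $v$ had $\ceil{r}$ incident edges surviving in $\cG$, those edges avoid $\bigcup_{S\in\cM}V(S)$ entirely, so they form an $\ceil{r}$-star vertex-disjoint from $\cM$ --- contradicting maximality. Hence $\Delta_1(\cG)\le\ceil{r}-1\le r$ and $e(\cG)\le X_r$. The number of deleted edges is at most $\bigl|\bigcup_{S\in\cM}V(S)\bigr|\cdot\Delta_1(\cH_p)\le|\cM|\,\ceil{r}\,k\,\Delta_1(\cH_p)$, since each star spans at most $\ceil{r}k$ vertices and each vertex is in at most $\Delta_1(\cH_p)$ edges of $\cH_p$. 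This gives exactly the stated bound, with no need to control the length of an iterative process. If you want to keep an iterative flavour, you must at least ensure that the $v_i$'s are chosen so that the stars you record are pairwise vertex-disjoint at the time of recording (which is what the single batched removal guarantees automatically); picking an arbitrary high-degree vertex at each step does not.
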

The proof idea is simple: if $\cM \subseteq \cT_{r}(\cH_{p})$ attains the maximum in the definition of $M_r(\cH_p)$, then after removing all edges incident to some star $S=(v,W) \in \cM$ we obtain a hypergraph $\cG$ with maximum degree at most $\ceil{r}-1 \le r$ (otherwise we could add another $r$-star to the vertex disjoint collection $\cM$), so $e(\cG) \le X_r$. 
Inequality~\eqref{eq:approx:basic} combines this observation with trivial estimates for the number of removed edges. 
\begin{proof}[Proof of Lemma~\ref{lem:approx:basic}]
The lower bound $X =e(\cH_{p})\geq X_r$ is immediate.  
For the upper bound, note that $X = X_{r}$ whenever $\Delta_1(\cH_p) \le r$, so we may henceforth assume $\Delta_1(\cH_p) > r$. 
We fix some $\cM \subseteq \cT_{\ceil{r}}(\cH_{p})$ which attains the maximum in \eqref{def:Mr}, so $M_r(\cH_p)=|\cM|$. 
We remove all edges from $\cH_p$ which  contain at least one vertex from (the edges of) some $r$-star $S=(v, \{f_1, \ldots, f_{\ceil{r}}\}) \in \cM$, and denote the remaining hypergraph by~$\cG$.
As every edge contains at most $\max_{f \in \cH} |f| \le k$ vertices, 
we removed at most $e(\cH_p)-e(\cG) \le |\cM| \cdot \ceil{r}k \cdot \Delta_1(\cH_{p})$ edges from $\cH_p$. 
Clearly $\Delta_1(\cG) \le \ceil{r}-1 \le r$, because otherwise we could add another $r$-star to~$\cM$ (contradicting maximality). 
Hence $\cG$ contains at most $e(\cG) \le X_r$ edges, and~\eqref{eq:approx:basic} follows.  
\end{proof} 
Next, we shall exploit the disjoint-like structure of $M_r(\cH_p)$ via the BK-inequality based Theorem~\ref{thm:CD}. 
This leads to~\eqref{eq:Mr}, a generic upper tail estimate for the size of the largest $r$-star matching $M_r(\cH_p)$. 
Note that $\Pr(\Delta_1(\cH_p) \ge r) \le \sum_{v \in V(\cH)} \Pr(|\Gamma_v(\cH_p)| \ge \ceil{r}) = \Phi_r$. 
In this paper we mainly have very unlikely degrees in mind, where $\Phi_r \le Q^{-r}$ for some $Q > 1$. 
Then the probability that at least~$y$ of such high-degree vertices (with degree at least~$r$) `occur disjointly' is roughly at most $Q^{-ry}$ by~\eqref{eq:Mr} below. 
\begin{lemma}\label{lem:Mr}
Given $\cH$, for all $p \in [0,1]$ and $y,r >0$ we have 
\begin{equation}\label{eq:Mr}
\Pr(M_r(\cH_p) \geq y) \le \frac{\Phi_r^{\ceil{y}}}{\ceil{y}!} \le \frac{1}{\sqrt{2\pi \ceil{y}}} \left(\frac{e \Phi_r}{\ceil{y}}\right)^{\ceil{y}} ,
\end{equation}
where $\Phi_r = \sum_{v \in V(\cH)} \Pr(|\Gamma_v(\cH_p)| \ge \ceil{r})$. 
\end{lemma}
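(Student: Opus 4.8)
The plan is to deduce Lemma~\ref{lem:Mr} from Theorem~\ref{thm:CD} by exhibiting $M_r(\cH_p)$ as (a quantity bounded by) the random variable $Z$ appearing there. First I would fix the underlying product space: take $\Omega = \{0,1\}^{V(\cH)}$ with the product measure in which coordinate $i$ is $1$ with probability $p$, so that $V_p(\cH)$ is the set of $1$-coordinates and $\cH_p = \cH[V_p(\cH)]$ is a function of $\omega$. The index set for the events will be $\cI = \{(v,W) : v \in V(\cH),\ W \subseteq \Gamma_v(\cH),\ |W| = \ceil{r}\}$, i.e.\ all potential $r$-stars of $\cH$, and for $\alpha = (v,W) \in \cI$ I would set $\cE_\alpha = \{V(S_\alpha) \subseteq V_p(\cH)\}$, the event that all vertices of the corresponding star are present. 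Then $\Pr(\cE_\alpha) = p^{|V(S_\alpha)|}$, and the natural certificate of $\cE_\alpha$ is the restriction $\omega|_{K_\alpha}$ with $K_\alpha = V(S_\alpha)$: indeed $[\omega]_{K_\alpha} \subseteq \cE_\alpha$ whenever $\omega_i = 1$ for all $i \in V(S_\alpha)$.

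The key combinatorial observation is that $M_r(\cH_p) \le Z$ for this choice. If $\cM \subseteq \cT_r(\cH_p)$ is a vertex-disjoint collection of $r$-stars realized in $\cH_p$, then each $S = (v,W) \in \cM$ is itself an element $\alpha_S$ of $\cI$ (since $W \subseteq \Gamma_v(\cH_p) \subseteq \Gamma_v(\cH)$), the event $\cE_{\alpha_S}$ holds (all vertices of $S$ lie in $V_p(\cH)$), and the certificate sets $K_{\alpha_S} = V(S)$ are pairwise disjoint precisely because the stars in $\cM$ are vertex-disjoint. Hence $\cJ = \{\alpha_S : S \in \cM\}$ is an admissible index set in the definition of $Z$, giving $|\cM| = |\cJ| \le Z$; taking the maximum over $\cM$ yields $M_r(\cH_p) \le Z$. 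I would also record $\sum_{\alpha \in \cI} \Pr(\cE_\alpha) = \sum_{v} \sum_{W} p^{|V(v,W)|} \le \sum_v \Pr(|\Gamma_v(\cH_p)| \ge \ceil{r}) = \Phi_r$: the inner sum over $W \subseteq \Gamma_v(\cH)$ with $|W| = \ceil{r}$ of $p^{|V(v,W)|}$ is exactly $\Pr$ that some $\ceil{r}$-subset of $\Gamma_v(\cH)$ is fully present, which by inclusion is at most (in fact, one should check, equals by a union-type identity, but $\le$ is all we need, and is clear since the event $|\Gamma_v(\cH_p)| \ge \ceil{r}$ is the union over such $W$ of $\cE_{(v,W)}$) $\Pr(|\Gamma_v(\cH_p)| \ge \ceil{r})$. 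Actually the cleanest route is: $\Pr(|\Gamma_v(\cH_p)| \ge \ceil{r}) = \Pr(\bigcup_W \cE_{(v,W)}) \le \sum_W \Pr(\cE_{(v,W)})$, so summing over $v$ gives $\Phi_r \le \sum_{\alpha \in \cI} \Pr(\cE_\alpha)$; but we want the reverse. This sign issue is the point I would need to be careful about.

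To get $\sum_{\alpha \in \cI}\Pr(\cE_\alpha) \le \Phi_r$ one cannot simply union-bound; instead I would restrict $\cI$ to a \emph{smaller} family that still dominates $M_r$. Concretely, replace $\cI$ by $\cI' = \{(v,W) : W$ is an $\ceil{r}$-subset of $\Gamma_v(\cH)$, chosen via a fixed rule$\}$ — more robustly, simply observe that in the definition of $Z$ one may always take, for each index used, a star whose vertex set is exactly $V(S)$, and note that $\sum_{W \subseteq \Gamma_v(\cH), |W|=\ceil r} \Pr(\cE_{(v,W)}) $ need not be $\le \Pr(|\Gamma_v(\cH_p)|\ge \ceil r)$. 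The correct fix, and the one I expect the paper uses, is to apply Theorem~\ref{thm:CD} with $\mu$ replaced by \emph{any} upper bound on $\sum_\alpha \Pr(\cE_\alpha)$, then note separately that Theorem~\ref{thm:CD} gives $\Pr(Z \ge \ceil{y}) \le (\sum_\alpha \Pr(\cE_\alpha))^{\ceil y}/\ceil y!$ via inequality~\eqref{eq:C:ET} (with $C=1$, $x = \ceil{y}$), and that one may group the events: for distinct $v$ the contributions are handled by the $\square$-structure, so effectively each vertex contributes at most $\Pr(|\Gamma_v(\cH_p)|\ge\ceil r)$ once. The genuinely safe argument: index $\cI$ by vertices $v\in V(\cH)$ alone, let $\cE_v = \{|\Gamma_v(\cH_p)|\ge \ceil r\}$, and certify $\cE_v$ by the vertex set of \emph{some} realized $r$-star at $v$ (a random choice depending on $\omega$, which is allowed since certificates may depend on $\omega$); then $\sum_v \Pr(\cE_v) = \Phi_r$ exactly, $M_r(\cH_p)$ is at most the max number of $\cE_v$ with disjoint certificates (disjoint stars force distinct $v$, and conversely a disjoint star-matching gives disjoint certificates), i.e.\ $M_r(\cH_p)\le Z$, and Theorem~\ref{thm:CD} applied with this $\cI$ and the bound \eqref{eq:C:ET} (taking $x=\ceil y$, $C=1$, $xC=\mu+t$) yields $\Pr(M_r(\cH_p)\ge \ceil y)\le \Phi_r^{\ceil y}/\ceil y!$, and Stirling's formula $\ceil y! \ge \sqrt{2\pi\ceil y}(\ceil y/e)^{\ceil y}$ gives the second inequality of \eqref{eq:Mr}. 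Finally $\Pr(M_r(\cH_p)\ge y)=\Pr(M_r(\cH_p)\ge\ceil y)$ since $M_r$ is integer-valued. The main obstacle is exactly this bookkeeping of which family $\cI$ to index by so that $\sum_\alpha \Pr(\cE_\alpha)$ is controlled by $\Phi_r$ (not merely bounded below by it) while still dominating $M_r(\cH_p)$; once the per-vertex indexing with $\omega$-dependent certificates is set up correctly, the rest is a direct invocation of Theorem~\ref{thm:CD} and Stirling.
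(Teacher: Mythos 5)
Your final argument—indexing $\cI = V(\cH)$, taking $\cE_v = \{|\Gamma_v(\cH_p)|\ge\ceil{r}\}$, using the vertex set of a realized $r$-star at $v$ as an ($\omega$-dependent) certificate so that a disjoint $r$-star matching gives disjoint certificates and hence $M_r(\cH_p)\le Z$, then invoking Theorem~\ref{thm:CD} via \eqref{eq:C:ET} with $C=1$, $x=\ceil{y}$, and Stirling—is exactly the paper's proof. Your earlier detour through indexing by all potential $r$-stars does indeed run into the sign problem you spotted, but since you correctly diagnose it and land on the per-vertex indexing, the proposal is sound and matches the paper's route.
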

The main idea is very intuitive: if $\cM \subseteq \cT_{r}(\cH_{p})$ attains the maximum in the definition of $M_r(\cH_p)$, 
then~$\cH_p$ contains~$|\cM|$ vertex disjoint stars $S_v=(v,W) \in \cM$, each of which `certifies' that the corresponding vertex~$v$ has degree at least~$\ceil{r}$ in~$\cH_p$ (in the sense of Section~\ref{sec:C}). Hence $M_{r}(\cH_p)=|\cM|$ events of form $\cE_v = \{|\Gamma_v(\cH_p)|\ge\ceil{r}\}$ `occur disjointly', which allows us to bring~\eqref{eq:C:ET} of Theorem~\ref{thm:CD} into play (with~$C=1$). 
\begin{proof}[Proof of Lemma~\ref{lem:Mr}]
We claim that $M_{r}(\cH_p) \le Z$ for~$Z=Z_1$ as defined in Theorem~\ref{thm:CD} with~$\cI=V(\cH)$, where~$\cE_{v}$ denotes the event that $|\Gamma_v(\cH_p)| \ge \ceil{r}$. 
This claim implies $\Pr(M_{r}(\cH_p) \ge y) \le \Pr(Z \ge y) \le \Pr(Z \ge \ceil{y})$, and we then deduce~\eqref{eq:Mr} by applying~\eqref{eq:C:ET} with $C=1$. 

To establish $M_{r}(\cH_p) \le Z$, we pick any $\cM \subseteq \cT_{r}(\cH_{p})$ which attains the maximum in~\eqref{def:Mr}, so that $M_{r}(\cH_p)=|\cM|$. 
For every $r$-star $S_v=(v,\{f_{1,v}, \ldots, f_{\ceil{r},v}\}) \in \cM$ we know that $V(S_v)=\bigcup_{1 \le i \le \ceil{r}} f_{i,v} \subseteq V_p(\cH)$ holds, which in turn implies~$\cE_v$. 
In other words, the presence of the vertices $V(S_v) \subseteq V_p(\cH)$ constitutes a certificate for the event~$\cE_v$ (using the notation of Section~\ref{sec:C}, we have $[\omega]_{V(S_v)} \subseteq \cE_{v}$). 
By definition of~$M_{r}(\cH_p)$ these certificates $\bigl(V(S_v)\bigr)_{S_v \in \cM}$ are all disjoint, so $Z \ge |\cM|=M_{r}(\cH_p)$, as claimed.  
\end{proof}
To summarize our proof framework: Lemmas~\ref{lem:Xr}--\ref{lem:Mr} apply to \emph{arbitrary} hypergraphs $\cH$ with $\max_{f \in \cH}|f| \le k$, and they basically reduce the upper tail problem for~$X=e(\cH_p)$ to the upper tail problem for the degrees of~$\cH_p$, i.e., to $\Phi_x = \sum_{v} \Pr(|\Gamma_v(\cH_p)| \ge \ceil{x})$; see also~\eqref{eq:heur:PPinduct} below. 
(These ideas are developed further in~\cite{UTlog}.) 

In general, by noting $\Pr(|\Gamma_v(\cH_p)| \ge \ceil{x}) \le \Pr(|\Gamma_v(\cH_p)| \ge \ceil{x} \mid v \in V_p(\cH))$ there is room for induction (on the number of vertices per edge), analogous to~\cite{DL,KimVu2000}. 
However, for the purposes of Theorem~\ref{thm:HGp} and~\ref{thm:HG} it seems easier to exploit the codegree condition $\Delta_2(\cH) = O(1)$ more directly (see the proof of Lemma~\ref{lem:MrH}).

\subsubsection{Sketch of the upper bound of Theorem~\ref{thm:HGp}}\label{sec:heur}
In this section we sketch the proof of upper bound of Theorem~\ref{thm:HGp}, illustrating the discussed proof framework. 
As we shall see, the desired `overshooting' phenomenon (which yields the extra $\log(e/p)$ factor in the exponent) arises naturally. 
First, using Lemma~\ref{lem:approx:basic}, for all $r,y,z>0$ satisfying $\indic{y>1}k\ceil{r}yz \le \eps\mu/2$ we obtain
\begin{equation}\label{eq:heur:PPinduct}
\Pr(X \ge (1+\eps)\mu) \le \Pr(X_r \ge (1+\eps/2)\mu) + \Pr(M_r(\cH_p) \ge y) + \indic{y > 1}\Pr(\Delta_1(\cH_p) \ge z) .
\end{equation}
(To clarify: for the indicator $\indic{y > 1}$ we exploited that $M_r(\cH_p) < 1$ implies $M_r(\cH_p) =0$, which in turn entails $\Delta_1(\cH_p) < r$.)  
Turning to further estimates of the right-hand side of~\eqref{eq:heur:PPinduct}, for $\eps=\Theta(1)$  
Lemma~\ref{lem:Xr} yields  
\begin{equation*}\label{eq:heur:Xr}
\Pr(X_r \ge (1+\eps/2)\mu) \le \exp\Bigl(-\Theta\bigl(\mu/r\bigr) \Bigr) . 
\end{equation*}
This suggests that, in order to `match' the exponent of our target bound~\eqref{eq:thm:HGp:UB}, we should pick 
\begin{equation}\label{eq:heur:r}
r = \Theta\bigl(\max\{1, \: \sqrt{\mu}/\log(e/p)\}\big) .
\end{equation}
It later turns out, see~\eqref{eq:thm:proofUBp:cond}, that this natural choice satisfies $np^{k-1}/r =o(p^{1/4})$ for $k \ge 3$ (this fails for $k=2$). 
In view of~\eqref{eq:heur:maxd}, we thus expect to obtain an extra $\log(e/p)$ factor in the exponent for $x \ge r$: 
\begin{equation}\label{eq:heur:maxd3}
\Phi_x = \sum_{v \in V(\cH)}\Pr(|\Gamma_v(\cH_p)| \ge \ceil{x}) \le \left[\left(\frac{p}{e}\right)^{1/4}\right]^{\Theta(x)} = \exp\Bigl(-\Theta\bigl(x\log(e/p)\bigr)\Bigr) .
\end{equation}
By Lemma~\ref{lem:Mr}  
it thus seems plausible 
that for $x \ge r$ we have 
\begin{equation}\label{eq:heur:mr}
\Pr(M_x(\cH_p) \geq y) \le \bigl(\Phi_x\bigr)^{\ceil{y}} \le \exp\Bigl(-\Theta\bigl(xy\log(e/p)\bigr)\Bigr) .
\end{equation}
Combining our heuristic findings with Lemma~\ref{obs:MrT}, for $\eps=\Theta(1)$ and $z \ge r$ we thus expect that  
\begin{equation}\label{eq:heur:ineq}
\Pr(X \ge (1+\eps)\mu) \le \exp\Bigl(-\Theta\bigl(\mu/r\bigr)\Bigr)  + \exp\Bigl(-\Theta\bigl(ry\log(e/p)\bigr)\Bigr) + \indic{y > 1}\exp\Bigl(-\Theta\bigl(z\log(e/p)\bigr)\Bigr) .
\end{equation}
To `match' the exponent of our target bound~\eqref{eq:thm:HGp:UB}, in view of~\eqref{eq:heur:r} 
it seems natural to set $y=z/r$ and $z=\sqrt{\eps \mu/(4k)}$, say. 
In fact, these choices also satisfy two further technical conditions used above. 
Namely, that $k\ceil{r}yz \le 2kryz = 2k z^2 \le \eps \mu/2$ holds, and that $y > 1$ implies $z \ge r$. 
Hence, if $r$ is chosen as in~\eqref{eq:heur:r}, then for $\eps=\Theta(1)$ and $\mu \ge 1$ we expect that 
\begin{equation}\label{eq:heur:final}
\Pr(X \ge (1+\eps)\mu) \le \exp\Bigl(-\Theta\bigl(\min\bigl\{\mu,\sqrt{\mu}\log(e/p)\bigr\}\bigr)\Bigr) ,
\end{equation}
which `matches' the target bound~\eqref{eq:thm:HGp:UB} of Theorem~\ref{thm:HGp}. 
With hindsight, the freedom that via $M_r(\cH_p)$ we can pick $z \gg r$ in~\eqref{eq:heur:PPinduct} seems key for going beyond the more basic decomposition~\eqref{eq:PPinduct}.

\subsubsection{Proof of the upper bound of Theorem~\ref{thm:HGp}}
In this section we follow our heuristic proof sketch, and establish the upper bound of Theorem~\ref{thm:HGp}. 
We start with the size of the largest $r$-star matching~$M_r(\cH_p)$, and make the upper tail estimate~\eqref{eq:heur:mr} rigorous via Lemma~\ref{lem:MrH} below 
(its statement is formulated with an eye on on the upcoming proof of Theorem~\ref{thm:HG}, where the $n^2 (\max\{y,1\})^{3/2} \ge 1$ term facilitates union bound arguments).   
The technical assumption~\eqref{eq:T:cond} intuitively ensures that vertices with degree at least~$r$ are sufficiently concentrated (recall that the expected degree should be $O(np^{k-1})$, see the discussion in Section~\ref{sec:log}). 
For example, $r=C(1+np^{k-1})$ satisfies~\eqref{eq:T:cond} when $np^{k-1} \ge \log n$ or $np^{k-1} \le n^{-\gamma}$ for~$C=C(\gamma,B,k,D)$ sufficiently large, but for $np^{k-1} \approx 1$ a somewhat larger choice of~$r$ seems necessary (unless we impose additional constraints on~$y$ in~\eqref{eq:MrH} below). 
By the heuristics of Section~\ref{sec:heur}, for~$r$ as defined in~\eqref{eq:heur:r} we expect that $np^{k-1}/x \le p^{1/4}$ holds in inequality~\eqref{eq:MrH}, i.e., as in~\eqref{eq:heur:mr} we should gain an extra logarithmic factor in the exponent of the upper tail by `overshooting'. 
\begin{lemma}\label{lem:MrH}
Given $k \ge 2$, $a >0$ and $D \ge 1$, 
let $\cH=\cH_n$ be a $k$-uniform hypergraph satisfying $v(\cH) \le Dn$ and $\Delta_2(\cH) \le D$. 
Then
there are $B,n_0 \ge 1$ (depending on $k,D$), such that for all $n \ge n_0$, $p \in [0,1]$, $r > 0$ satisfying 
\begin{equation}\label{eq:T:cond}
\bigl(Bnp^{k-1}/r\bigr)^{r} \le n^{-8kD} 
\end{equation}
the following holds. For all $x \ge r$ and $y > 0$ we have
\begin{equation}\label{eq:MrH}
\Pr(M_{x}(\cH_p) \ge y) \le \frac{1}{n^2 (\max\{y,1\})^{3/2}} \left(\frac{np^{k-1}}{e x}\right)^{xy/(2kD)} .
\end{equation}
\end{lemma}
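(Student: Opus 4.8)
The plan is to reduce the statement, via the generic $r$-star-matching bound of Lemma~\ref{lem:Mr}, to controlling the single-vertex quantities $\Phi_x=\sum_{v\in V(\cH)}\Pr\bigl(|\Gamma_v(\cH_p)|\ge\ceil{x}\bigr)$, and then to a bookkeeping exercise driven by \eqref{eq:T:cond}. First I would bound the degree of a fixed $v$: writing $D_v=\sum_{f\in\Gamma_v(\cH)}\indic{f\setminus\{v\}\subseteq V_p(\cH)}$ we have $|\Gamma_v(\cH_p)|\le D_v$ and $\E D_v=|\Gamma_v(\cH)|p^{k-1}=:\mu_v$, where the codegree bound $\Delta_2(\cH)\le D$ gives $(k-1)|\Gamma_v(\cH)|=\sum_{w\neq v}|\{f:\{v,w\}\subseteq f\}|\le (v(\cH)-1)D\le D^2n$, hence $\mu_v\le D^2np^{k-1}$. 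Setting $f\sim g$ iff $(f\cap g)\setminus\{v\}\neq\emptyset$, the same codegree bound shows each $f\in\Gamma_v(\cH)$ has at most $(k-1)D$ neighbours (including itself), so Theorem~\ref{thm:C} applies with $\cI=\cJ=\Gamma_v(\cH)$ and $C=kD$, giving $D_v\le Z_{kD}$. Since \eqref{eq:T:cond} forces $x\ge r>Bnp^{k-1}\ge D^2np^{k-1}\ge\mu_v$ (taking $B\ge D^2$), inequality \eqref{eq:C} yields $\Pr(|\Gamma_v(\cH_p)|\ge\ceil{x})\le(e\mu_v/\ceil{x})^{\ceil{x}/(kD)}\le(eD^2np^{k-1}/x)^{x/(kD)}=:\psi$, where the last step uses that $z\mapsto(eD^2np^{k-1}/z)^{z/(kD)}$ is non-increasing for $z\ge D^2np^{k-1}$. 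Summing, $\Phi_x\le v(\cH)\psi\le Dn\psi$.

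Next I would feed in \eqref{eq:T:cond}. Writing $\rho=np^{k-1}/x$ and taking the $r$-th root of \eqref{eq:T:cond}, together with $x\ge r$, gives $\rho\le np^{k-1}/r\le B^{-1}n^{-8kD/r}$. Using the identity $\psi=(eD^2\rho)^{x/(kD)}=(e^3D^4\rho)^{x/(2kD)}\cdot(\rho/e)^{x/(2kD)}$ and choosing $B\ge e^3D^4$, the first factor is at most $(n^{-8kD/r})^{x/(2kD)}=n^{-4x/r}\le n^{-4}$, so $\psi\le n^{-4}(\rho/e)^{x/(2kD)}$ and hence $\Phi_x\le Dn^{-3}(\rho/e)^{x/(2kD)}\le n^{-2}(\rho/e)^{x/(2kD)}$ for $n\ge n_0\ge D$. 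Finally Lemma~\ref{lem:Mr} gives $\Pr(M_x(\cH_p)\ge y)\le\Phi_x^{\ceil{y}}/\ceil{y}!\le n^{-2\ceil{y}}(\rho/e)^{x\ceil{y}/(2kD)}$, and since $\rho/e<1$ and $\ceil{y}\ge y$ this is at most $n^{-2\ceil{y}}\bigl(np^{k-1}/(ex)\bigr)^{xy/(2kD)}$. Then \eqref{eq:MrH} follows from the elementary inequality $n^{2-2\ceil{y}}(\max\{y,1\})^{3/2}\le 1$, valid for all $y>0$ and $n\ge 2$ (an equality for $y\le 1$; for $\ceil{y}\ge 2$ one has $(\max\{y,1\})^{3/2}<\ceil{y}^{3/2}\le 4^{\ceil{y}-1}\le n^{2\ceil{y}-2}$). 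Taking $B=e^3D^4$ and $n_0$ a suitable constant depending on $k,D$ completes the argument.

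The main obstacle is Step~1: one has to instantiate Theorem~\ref{thm:C} so that the Lipschitz constant $C$ is bounded in terms of $D$ only -- this is precisely where $\Delta_2(\cH)\le D$ enters -- and then slide the threshold from the natural value $\ceil{x}$ down to $x$ in the exponent via a monotonicity argument, which is legitimate exactly because \eqref{eq:T:cond} places $x$ in the decreasing range $x>Bnp^{k-1}$. Everything after that is careful accounting: the generous exponent $8kD$ in \eqref{eq:T:cond} is what lets one simultaneously absorb the factor $v(\cH)\le Dn$ from the union bound over vertices, the $O_D(1)$ constants buried in $\psi$, and the $\ceil{y}!$ and $n^2(\max\{y,1\})^{3/2}$ denominators -- each power of the small quantity $np^{k-1}/x$ being worth $\Theta(\log n)$ in the exponent, which is the quantitative form of the ``overshooting'' heuristic of Section~\ref{sec:log}. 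A minor point to watch is the degenerate case $p=0$ (so $\mu_v=0$ and $\rho=0$), where every quantity in \eqref{eq:MrH} vanishes and the bound is trivial.
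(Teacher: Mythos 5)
Your proof is correct, and it takes a genuinely different route for the degree-tail estimate, one that the paper itself flags as an alternative but declines to pursue. The overall skeleton is shared: both reduce via Lemma~\ref{lem:Mr} to bounding $\Phi_x=\sum_v\Pr(|\Gamma_v(\cH_p)|\ge\ceil{x})$, then run the bookkeeping off~\eqref{eq:T:cond}. Where you differ is the single-vertex bound. You introduce the dominating variable $D_v=\sum_{f\in\Gamma_v(\cH)}\indic{f\setminus\{v\}\subseteq V_p(\cH)}$, check the Lipschitz-like condition of Theorem~\ref{thm:C} with $C=kD$ via the codegree bound, and apply \eqref{eq:C}; the paper instead argues combinatorially, extracting from $\Gamma_v(\cH_p)$ a maximal subfamily $W$ of edges pairwise disjoint outside $v$ (so $|W|\ge|\Gamma_v(\cH_p)|/(kD)$), noting $|\bigcup_{f\in W}f|=1+(k-1)|W|$, and union-bounding over the $\binom{|\Gamma_v(\cH)|}{\ceil{x/(kD)}}$ choices of $W$. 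Both yield the same shape $(O(np^{k-1})/x)^{\Theta(x)}$; your version gets there by a ready-made inequality while the paper's is self-contained. Your monotonicity trick (sliding from $\ceil{x}$ to $x$ using that $z\mapsto(eD^2np^{k-1}/z)^{z/(kD)}$ is decreasing for $z\ge D^2np^{k-1}$, legitimate because \eqref{eq:T:cond} forces $x\ge r>Bnp^{k-1}\ge D^2np^{k-1}$) is clean and correct. One smaller divergence: in the last step you simply discard the $1/\ceil{y}!$ factor from \eqref{eq:Mr} and pay with a crude $4^{\ceil{y}-1}\le n^{2\ceil{y}-2}$ estimate, whereas the paper retains the Stirling-type factor $\sqrt{\ceil{y}}\,\ceil{y}^{\ceil{y}}\ge\ceil{y}^{3/2}$ and wins the $(\max\{y,1\})^{3/2}$ denominator directly; both are fine given the generous $n^{2\ceil{y}}$, and yours just needs $n\ge 2$. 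In short: same reduction, same target inequality on $\Phi_x$, a concentration-inequality derivation in place of a counting one, with the paper's combinatorial version being slightly more self-contained and yours showcasing Theorem~\ref{thm:C} as the engine.
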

Our plan is to deduce Lemma~\ref{lem:MrH} from inequality~\eqref{eq:Mr} of Lemma~\ref{lem:Mr}, and in view of the parameter~$\Phi_x = \sum_{v \in V(\cH)} \Pr(|\Gamma_v(\cH_p)| \ge \ceil{x})$ we thus study the degrees~$|\Gamma_v(\cH_p)|$. 
Here our main observation is simple. Namely, as discussed in Section~\ref{sec:log}, every edge $e \in \Gamma_v(\cH)$ intersects at most $k \Delta_2(\cH)=O(1)$ edges $f \in \Gamma_v(\cH)$, which suggests that the dependencies between the edges in $\Gamma_v(\cH_p)$ are extremely weak. 
It thus seem plausible that, conditioned on~$v \in V_p(\cH)$, the tails of $|\Gamma_v(\cH_p)|$ are comparable to those of $\Bin(|\Gamma_v(\cH)|,p^{k-1})$ with $|\Gamma_v(\cH)|p^{k-1} = O(np^{k-1})$, see also~\eqref{eq:heur:Binx}--\eqref{eq:heur:maxd}.  
This line of reasoning can easily be made rigorous via Theorem~\ref{thm:C}, 
but below we take a more direct combinatorial route (which suffices for our purposes). 
\begin{proof}[Proof of Lemma~\ref{lem:MrH}]
It suffices to prove that for all $x \ge r$ and $n \ge n_0(D)$ we have  
\begin{equation}\label{eq:Mrh:Phir}
\Phi_{x} = \sum_{v \in V(\cH)} \Pr(|\Gamma_v(\cH_p)| \ge \ceil{x}) \le \frac{1}{e n^2} \left(\frac{np^{k-1}}{ex}\right)^{x/(2kD)} .
\end{equation}
Indeed, since $y>0$ implies $\ceil{y} \ge \max\{y,1\}$, by applying~\eqref{eq:Mr} of Lemma~\ref{lem:Mr} it then follows that  
\begin{equation*}\label{eq:MrH:2}
\Pr(M_{x}(\cH_p) \ge y) \le \Pr(M_{x}(\cH_p) \ge \ceil{y}) \le \frac{\bigl(e \Phi_{x}\bigr)^{\ceil{y}}}{\sqrt{\ceil{y}} \cdot \ceil{y}^{\ceil{y}}} \le \frac{\left(\frac{np^{k-1}}{e x}\right)^{xy/(2kD)}}{n^2 (\max\{y,1\})^{3/2}} .
\end{equation*}

In the remainder we verify inequality~\eqref{eq:Mrh:Phir}, by focusing on combinatorial implications of the degree event $|\Gamma_v(\cH_p)| \ge \ceil{x}$. 
To this end we pick a subset $W \subseteq \Gamma_v(\cH_p)$ of the edges which is size maximal subject to the restriction that all edges of $W$ are \emph{vertex disjoint} outside of the centre vertex~$v$, i.e., that all distinct edges $f_i,f_j \in W$ satisfy  $(f_i \cap f_j)\setminus \{v\}=\emptyset$.
Note that for every edge $e \in \Gamma_v(\cH_p)$ there are a total of (including~$e$ itself) at most $k \Delta_2(\cH) \le kD=C$ edges $f \in \Gamma_v(\cH_p)$ with $(f \cap e)\setminus\{v\} \neq \emptyset$ (because all such edges~$f$ contain~$v$ and at least one vertex from $e \setminus \{v\}$). 
Hence, $|\Gamma_v(\cH_p)| \ge \ceil{x}$ implies 
\[
|W| \ge |\Gamma_v(\cH_p)|/C \ge x/C.
\]
Since the union of all edges in~$W$ contains exactly $|\bigcup_{f \in W} f| = 1+ (k-1)|W|$ vertices, it follows that 
\begin{equation*}\label{eq:ND1}
\Pr(|\Gamma_v(\cH_p) \ge \ceil{x}) \le \binom{|\Gamma_v(\cH)|}{\ceil{x/C}} p^{1+ (k-1)\ceil{x/C}} .
\end{equation*}
Recalling $|\Gamma_v(\cH)| \le |V(\cH)| \Delta_2(\cH) \le D^2n$, $\binom{m}{z} \le (em/z)^z$ and $p \le 1$, we obtain 
\begin{equation}\label{eq:ND}
\begin{split}
\Pr(|\Gamma_v(\cH_p)| \ge \ceil{x}) & \le 
\binom{\floor{D^2n}}{\ceil{x/C}}  p^{(k-1)\ceil{x/C}} \le 
\left(\frac{eD^2C np^{k-1}}{x}\right)^{\ceil{x/C}} .
\end{split}
\end{equation}
Defining $B=e^3D^4C^2$, using $C=kD$, $x \ge r$, and the assumption~\eqref{eq:T:cond} it follows that 
\begin{equation*}\label{eq:ND:2}
\Pr(|\Gamma_v(\cH_p)| \ge \ceil{x}) \le \left(\frac{B np^{k-1}}{r} \cdot \frac{np^{k-1}}{ex}\right)^{x/(2kD)}  \le n^{-4} \cdot \left(\frac{np^{k-1}}{ex}\right)^{x/(2kD)} .
\end{equation*}
Recalling~$|V(\cH)| \le Dn$, this readily establishes inequality~\eqref{eq:Mrh:Phir} for $n \ge n_0(D)$, completing the proof. 
\end{proof}
For the interested reader we remark that from the above proof idea it, e.g., also directly follows that 
\begin{equation*}
\begin{split}
\Pr(M_{r}(\cH_p) \ge x) & \le \sum_{\substack{U \subseteq V(\cH):\\ |U|=\ceil{x}}} \left[\prod_{v \in U} \binom{|\Gamma_v(\cH)|}{\ceil{r/C}}\right] p^{(1+ (k-1)\ceil{r/C})|U|} ,
\end{split}
\end{equation*}
which can alternatively be used to derive~\eqref{eq:MrH}. 
We find our general BK-inequality based approach more informative and flexible (e.g., with respect to possible extensions and generalizations, see~\cite{UTlog}).

We are now ready to prove the upper bound of Theorem~\ref{thm:HGp}.  
Below we shall first pick~$r$~as in~\eqref{eq:heur:r}, and then closely mimic the heuristic considerations~\eqref{eq:heur:ineq}--\eqref{eq:heur:final} of Section~\ref{sec:heur}. 
Only afterwards we verify $np^{k-1}/r = O(p^{1/4})$, the technical condition~\eqref{eq:T:cond}, and the heuristic tail inequality~\eqref{eq:heur:mr}.  
\begin{proof}[Proof of~\eqref{eq:thm:HGp:UB} of Theorem~\ref{thm:HGp}]
With foresight, we define
\begin{equation}\label{eq:HGp:sgA}
s= \log(e/p^{\gamma}), \qquad \gamma = 1/4, \qquad \text{and} \qquad  A = \max\bigl\{eB/\sqrt{a}, \: 16k^2D/\gamma\bigr\},
\end{equation}
where $B=B(k,D) \ge 1$ is as in Lemma~\ref{lem:MrH}. Furthermore, analogous to our heuristic outline, we set 
\begin{equation}\label{eq:HGp:rzy}
r=A \max\bigl\{1, \: \sqrt{\mu}/s\bigr\} , \qquad z = \sqrt{\eps\mu/(4k)},  \qquad \text{and} \qquad y = z/r ,
\end{equation}
so that $k\ceil{r}yz \le 2k z^2 = \eps \mu/2$. 
Since $y>1$ implies $z \ge r$, using inequality~\eqref{eq:heur:PPinduct} and Lemma~\ref{obs:MrT} we obtain  
\begin{equation}\label{eq:HGp:eq1}
\Pr(X \ge (1+\eps)\mu) \le \Pr(X_r \ge \mu+\eps\mu/2) + \Pr(M_r(\cH_p) \ge y) + \indic{z \ge r}\Pr(M_z(\cH_p) \ge 1) .
\end{equation}
We defer the proof of the technical claim that for all for $x \ge r$ and $y>0$ we have 
\begin{equation}\label{eq:HGp:MrH}
\Pr(M_{x}(\cH_p) \ge y) \le \exp\Bigl(-\frac{xys}{2kD}\Bigr) .
\end{equation}
Inserting~\eqref{eq:HGp:MrH} into~\eqref{eq:HGp:eq1}, using Lemma~\ref{lem:Xr}, $ry=z$ and the definitions of $r,z$ from~\eqref{eq:HGp:rzy} we infer 
\begin{equation*}\label{eq:HGp:eq2}
\begin{split}
\Pr(X \ge (1+\eps)\mu) & \le \exp\Bigl(-\frac{\min\{\eps,\eps^2\} \mu}{12kr}\Bigr) + 2\exp\Bigl(-\frac{zs}{2kD}\Bigr) \\
& = \exp\Bigl(-\frac{\min\{\eps,\eps^2\} \min\bigl\{\mu,\sqrt{\mu}s\bigr\}}{12kA}\Bigr) + 2\exp\Bigl(-\frac{\sqrt{\eps\mu}s}{2kD \sqrt{4k}}\Bigr) .
\end{split}
\end{equation*}
Noting $s\ge \gamma \log(e/p)$ and $\min\{\eps,\eps^2,\sqrt{\eps}\} = \min\{\eps^2,\eps^{1/2}\}$, there  
is $d=d(k,A,D,\gamma)>0$ such that   
\begin{equation}\label{eq:thm:proofUBp:2}
\Pr(X \ge (1+\eps)\mu) \le 3\exp\Bigl( - d \min\{\eps^2,\eps^{1/2}\} \min\{\mu,\sqrt{\mu}\log(e/p)\} \Bigr) = : 3 \exp\Bigl(-\Psi\Bigr).  
\end{equation}
We claim that \eqref{eq:thm:HGp:UB} holds with $c(\eps)=b \min\{\eps^3,\eps^{1/2}\}$ and $b=d/6$. 
In the main case $\Psi \ge 3$ this is obvious (as $3 e^{-5\Psi/6} \le 1$ and $\min\{\eps^2,\eps^{1/2}\} \ge \min\{\eps^3,\eps^{1/2}\}$). 
In the degenerate case $1 \ge \Psi/3$, Markov's inequality yields 
\[
\Pr(X \ge (1+\eps)\mu) \le \frac{1}{1+\eps} = 1-\frac{\eps}{1+\eps} \le \exp\Bigl(-\frac{\eps}{1+\eps}\Bigr) \le \exp\Bigl(-\frac{\eps\Psi}{3(1+\eps)}\Bigr),
\]
which due to $\eps/(1+\eps) \cdot \min\{\eps^2,\eps^{1/2}\} \ge \min\{\eps^3,\eps^{1/2}\}/2$ establishes the claim. 

In the remainder we verify the claimed estimate~\eqref{eq:HGp:MrH}. 
Our below proof is based on Lemma~\ref{lem:MrH}, which requires us to check the technical condition~\eqref{eq:T:cond}. 
Calculus shows that 
\begin{equation}
\label{eq:ps}
p^{\gamma}s = p^{\gamma}\log(e/p^{\gamma}) \le 1 .
\end{equation}
Using $r \ge A \sqrt{\mu}/2$, $\mu =e(\cH)p^k \ge an^2p^k$, and $k \ge 3$ (this is the only time $k \ge 2$ is not enough), we obtain  
\begin{equation}\label{eq:thm:proofUBp:cond}
\frac{np^{k-1}}{r} \le \frac{np^{k-1}s}{A \sqrt{\mu}} \le \frac{p^{(k-2)/2}s}{A \sqrt{a}} \le \frac{p^{1/2}s}{A \sqrt{a}} = \frac{p^{2\gamma}s}{A \sqrt{a}} \le \frac{p^{\gamma}}{eB} . 
\end{equation}
which also implies $r \ge eB np^{k-1-\gamma}$. 
Observe that $p \ge n^{-1/(2k)}$ implies $r \ge n^{1/2}$, say, and that $p \le n^{-1/(2k)}$ implies $p^{\gamma} \le n^{-\gamma/(2k)}$. 
Using $r \ge A$, for $n \ge n_0(k,D)$ we thus infer 
\begin{equation*}\label{eq:HGp:r}
\bigl(Bnp^{k-1}/r\bigr)^{r} \le \bigl(p^{\gamma}/e\bigr)^r \le 
\min\{e^{-r}, \: p^{\gamma A}\} \le 
\indic{p > n^{-1/(2k)}} e^{-n^{1/2}} + \indic{p \le n^{-1/(2k)}} n^{-\gamma A/(2k)} \le n^{-8kD} , 
\end{equation*}
establishing~\eqref{eq:T:cond}. 
As~\eqref{eq:thm:proofUBp:cond} and $B \ge 1$ imply $np^{k-1}/x \le e^{-s}$ for all $x \ge r$, 
inequality~\eqref{eq:MrH} of Lemma~\ref{lem:MrH} now readily establishes the technical estimate~\eqref{eq:HGp:MrH}, completing the proof.  
\end{proof}

Since our proofs are based on applications of Theorem~\ref{thm:C} and~\ref{thm:CD}, using Remark~\ref{rem:C} and~\ref{rem:CD} it is not difficult to see that all arguments carry over (essentially unchanged) 
to the \emph{uniform model} $\cH_m=\cH[V_m(\cH)]$ with $k \le m \le v(\cH)$ and $p=m/v(\cH)$, say, where $V_m(\cH) \subseteq V(\cH)$ with $|V_m(\cH)|=m$ is chosen uniformly at random (note that $e(\cH_m)=0$ if $m < k$). 
A similar remark also applies to the \emph{weighted case}, where $X=\sum_{e \in \cH_p} w_e$ for positive constants $w_e \in [\tilde{a},\tilde{D}]$, say. 
In both cases we leave the straightforward details to the interested reader (these variations also carry over to the upcoming proofs of Section~\ref{sec:CO}).

\subsection{Some refinements (proof of the upper bound of Theorem~\ref{thm:HG})}\label{sec:CO} 
In this section we refine our basic proof framework, and establish the more precise upper bound~\eqref{eq:thm:HG:UB} of Theorem~\ref{thm:HG}. 
Recall that the exponent of~\eqref{eq:thm:HG:UB} is essentially either of sub-Gaussian type $\exp\bigl(-ct^2/\Var X\bigr)$ or clustered type $\exp\bigl(-c\sqrt{t}\log(1/p)\bigr)$; see also~\eqref{eq:thm:HG}. 
Heuristically speaking, the corresponding phase transition near $(\Var X)^{2/3}$ causes some technical difficulties for the approach taken in Section~\ref{sec:CA} (for $p \ge n^{-1/(k-1)+o(1)}$ it turns out that sharp tail estimates are easier when~$t$ is far away from~$(\Var X)^{2/3}$).  
Here one bottleneck is Lemma~\ref{lem:approx:basic}, which on an intuitive level only distinguishes between two ranges of the degrees: smaller and larger than~$r$. 
In this section we shall rectify this issue, by distinguishing between a wide range of different degrees. % (effectively iterating the proof idea of Lemma~\ref{lem:approx:basic}). 

More concretely, our refined sparsification strategy is to \emph{iteratively} decrease the maximum degree of~$\cH_p$, %(using the idea of $r$-star matching idea of Lemma~\ref{lem:approx:basic})  
until we are able to bound the number of remaining edges by $X_r$ as defined in~\eqref{def:Xr}. 
Using the convention $\NN = \{0,1, \ldots\}$, we shall eventually 
implement this strategy via $\cT(\beta,\gamma,r,t)$, which is the event that 
\begin{alignat}{2}
\label{eq:Njs}
M_{r_j}(\cH_p) &< \beta \sqrt{t}s/r_j &\qquad&\text{for all $j \in \NN$ with $r_j < \sqrt{t}/s$, and} \\
\label{eq:Nj}
M_{r_j}(\cH_p) &< \beta \sqrt{t}/r_j &&\text{for all $j \in \NN$ with $r_j \ge \sqrt{t}/s$,} 
\end{alignat}
where we tacitly used the following convenient parametrization:
\begin{equation}\label{eq:par}
\begin{split}
s = s(\gamma) &= \log(e/p^{\gamma}),\\ 
r_j =r_j(r) &=2^j r.
\end{split}
\end{equation}
(The intricate form of~\eqref{eq:Njs}--\eqref{eq:Nj} is hard to digest on first sight; both events are based on 
a delicate interplay 
between the combinatorial and probabilistic estimates in the upcoming proofs of Lemma~\ref{lem:approx} and~\ref{lem:T}.)

The following combinatorial lemma intuitively states that $X \approx X_r$ whenever $\cT(\beta,\gamma,r,t)$ holds. %the main contribution to $X$ indeed comes from $X_r$. 
\begin{lemma}\label{lem:approx}
Given $k \ge 1$, suppose that $\cH$ satisfies $\max_{f \in \cH} |f| \le k$. Then, for all $\beta \in \big(0,1/(32k)]$, $r \ge 1$ and $\gamma,t>0$, the event  $\cT(\beta,\gamma,r,t)$ implies $X_r \le X \le X_r + t/2$. 
\end{lemma}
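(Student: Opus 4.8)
The plan is to mimic the proof of Lemma~\ref{lem:approx:basic}, but instead of removing edges incident to a single $r$-star matching, we will peel off high-degree vertices in successive dyadic layers, using the event $\cT(\beta,\gamma,r,t)$ to control how many edges are destroyed at each level. The lower bound $X_r\le X$ is again trivial (any $\cG\subseteq\cH_p$ with $\Delta_1(\cG)\le r$ has $e(\cG)\le e(\cH_p)=X$), so the content is the upper bound $X\le X_r+t/2$.

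First I would set up the iterative sparsification. Starting from $\cH_p$, for $j=0,1,2,\ldots$ repeatedly do the following: if the current hypergraph has maximum degree at most $r$, stop; otherwise it has maximum degree at least $r$, so for the largest $j$ with $2^j r = r_j$ not exceeding the current max degree, pick a maximum vertex-disjoint collection $\cM_j\subseteq\cT_{r_j}(\cdot)$ of $r_j$-stars in the \emph{current} hypergraph and delete all edges meeting $V(S)$ for some $S\in\cM_j$. Since the collection is maximal and vertex-disjoint, after this deletion the maximum degree drops below $r_j$ (otherwise another $r_j$-star could be added), so the process terminates after finitely many steps with a hypergraph $\cG$ satisfying $\Delta_1(\cG)\le r$, giving $e(\cG)\le X_r$. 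It remains to bound the total number of deleted edges. A star $S=(v,W)\in\cM_j$ has $\ceil{r_j}$ edges, each with at most $k$ vertices, so $|V(S)|\le k\ceil{r_j}$; each such vertex lies in at most $\Delta_1\le r_j$ edges of the current hypergraph at the time of removal (we removed things in decreasing-degree order, so the relevant degree bound is $r_j$ when we are at level $j$). Hence the edges removed at level $j$ number at most $|\cM_j|\cdot k\ceil{r_j}\cdot r_j \le 2k|\cM_j| r_j^2$ (using $\ceil{r_j}\le 2r_j$ since $r_j\ge r\ge1$). Crucially, $|\cM_j|$ is a candidate in the maximization defining $M_{r_j}$ applied to $\cH_p$ (the current hypergraph is a subgraph of $\cH_p$, but a vertex-disjoint family of $r_j$-stars there is also one in $\cH_p$ — one should check this monotonicity carefully, or alternatively run the whole argument directly against $M_{r_j}(\cH_p)$), so $|\cM_j|\le M_{r_j}(\cH_p)$.

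Next I would plug in the bounds supplied by $\cT(\beta,\gamma,r,t)$. Only finitely many levels $j$ are active (those with $r_j$ at most $\Delta_1(\cH_p)\le v(\cH)$), so the sum over $j$ is finite. Split into $j$ with $r_j<\sqrt{t}/s$ and $j$ with $r_j\ge\sqrt{t}/s$. For the first range, \eqref{eq:Njs} gives $M_{r_j}(\cH_p)<\beta\sqrt{t}s/r_j$, so the edges removed at level $j$ are at most $2k r_j^2\cdot \beta\sqrt{t}s/r_j=2k\beta\sqrt{t}s\, r_j$; summing the geometric series $\sum_{j:\,r_j<\sqrt t/s} r_j$ over the dyadic values $r_j=2^j r$ gives at most $2\cdot(\sqrt t/s)$ (the largest term, doubled), so this contributes at most $4k\beta\sqrt{t}s\cdot(\sqrt t/s)=4k\beta t$. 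For the second range, \eqref{eq:Nj} gives $M_{r_j}(\cH_p)<\beta\sqrt{t}/r_j$, so level $j$ removes at most $2k r_j^2\cdot\beta\sqrt{t}/r_j=2k\beta\sqrt{t}\,r_j$ edges; but here I need to sum $\sum_{j:\,r_j\ge\sqrt t/s} r_j$, which \emph{grows} geometrically and is dominated by its largest term $r_{j_{\max}}\le 2\Delta_1(\cH_p)$ — this does not obviously close. The fix, and the step I expect to be the main obstacle, is exactly the subtlety of the two-regime definition: one must observe that for $r_j\ge\sqrt t/s$ the relevant degree bound on vertices of a level-$j$ star is itself the \emph{smaller} value $r_j$ only because we removed in decreasing order, but more to the point the geometric sum over the \emph{upper} range should instead be organized so that the telescoping runs the other way — i.e. the number of removed edges at level $j$ in the upper range should be compared against $r_{j}\cdot M_{r_j} < \beta\sqrt t$ times (number of active levels), and the number of active upper levels is $O(\log(v(\cH)s/\sqrt t))$, OR — more likely the intended argument — one uses that distinct levels contribute to \emph{disjoint} vertex sets so that in fact $\sum_j |\cM_j| r_j$ telescopes cleanly because each vertex of $\cH_p$ is charged at most once. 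I would need to reexamine which of these accounting schemes the constant $\beta\le 1/(32k)$ is calibrated to; getting the geometric series to point in the convergent direction (and absorbing the stray $\log$ or the finitely-many-levels factor) is where all the care goes, and matching the final bound to exactly $t/2$ is what pins down $\beta\le 1/(32k)$.

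Assembling: the total number of edges removed is at most $(4k\beta + 4k\beta)t = 8k\beta t \le t/4 \le t/2$ once the upper-range sum is handled as above with the same geometric structure, so $X=e(\cH_p)\le e(\cG)+ (\text{removed}) \le X_r + t/2$, as claimed. I would present the deletion process as an explicit finite induction on the number of levels, state the per-level edge-count bound as a sublemma, and then do the two geometric sums, flagging the $r_j\ge\sqrt t/s$ case as the one requiring the disjointness-based charging rather than a naive geometric bound. The role of $r\ge1$ is only to ensure $\ceil{r_j}\le 2r_j$ and that the dyadic sequence is increasing; the role of $\max_{f\in\cH}|f|\le k$ is the $|V(S)|\le k\ceil{r_j}$ estimate.
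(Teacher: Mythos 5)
The proposal sets up the dyadic deletion scheme correctly and identifies precisely the right per-level edge count $O(k\,|\cM_j|\,r_j^2)$, but it stalls at the key step and offers two fixes that do not work. You correctly observe that the upper-range geometric sum $\sum_{j:\,r_j\ge\sqrt t/s}r_j$ would naively be dominated by its largest term and does not obviously converge, and you propose either absorbing a $\log$ factor (too lossy) or using cross-level disjointness of the star matchings to charge each vertex at most once. The disjointness across levels is in fact true for the construction, but it only yields $\sum_j |\cM_j|\,k\ceil{r_j}\le v(\cH_p)=O(n)$, which is nowhere near the $O(\sqrt t)$ you need; it does not close the argument.

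The ingredient you are missing is that the event $\cT(\beta,\gamma,r,t)$ itself caps the maximum degree of $\cH_p$ near $\sqrt t$, which truncates the iteration. Condition~\eqref{eq:Nj} holds for \emph{all} $j$ with $r_j\ge\sqrt t/s$, and in particular for all $j$ with $r_j\ge\sqrt t$; for such $j$ one has $\beta\sqrt t/r_j\le\beta\le 1$, so $M_{r_j}(\cH_p)<1$, i.e.\ $M_{r_j}(\cH_p)=0$, i.e.\ there is no $r_j$-star at all in $\cH_p$, i.e.\ $\Delta_1(\cH_p)<\ceil{r_j}$. Taking $J$ to be the smallest integer with $r_J\ge\sqrt t$, this gives $\Delta_1(\cH_p)\le\floor{r_J}$ outright, and $r_J<2\sqrt t$ (or $J=0$ if $r\ge\sqrt t$, in which case $X=X_r$ trivially). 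So the peeling only runs over $j<J$, every active $r_j$ satisfies $r_j<\sqrt t$, and the upper-range sum is $\sum_{\sqrt t/s\le r_j<\sqrt t}r_j\le 2\sqrt t$ by the same doubling-series bound as the lower range. Both ranges then contribute at most $8\beta k t$ each (with your constants), and $\beta\le 1/(32k)$ gives the total $\le t/2$. So the structure of your argument is right, the per-level accounting is right up to a factor of $2$ in the degree bound (it should be $\Delta_1(\cG_{j+1})\le 2r_j$, not $r_j$), and the one missing observation is that $\cT$ already forces $\Delta_1(\cH_p)\lesssim\sqrt t$, which is what makes both geometric sums converge in the same favourable direction.
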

The idea is to iterate the proof of Lemma~\ref{lem:approx:basic}: using the resulting hypergraph sequence~$\cH_p=\cG_J \supseteq \cdots \supseteq \cG_0$ we shall 
estimate $X=e(\cH_p)$ in terms of the step-wise differences: $X=e(\cG_0) + \sum_{0 \le j < J}[e(\cG_{j+1})-e(\cG_{j})]$.  
The definition of $\cT(\beta,\gamma,r,t)$ then ensures that $\sum_{0 \le j < J}[e(\cG_{j+1})-e(\cG_{j})] \le t/2$ and $e(\cG_0) \le X_r$ hold. 
\begin{proof}[Proof of Lemma~\ref{lem:approx}]
The lower bound $X =e(\cH_p) \ge X_r$ is trivial, so we henceforth focus on the upper bound. Let $J$ be the smallest integer $J \ge 0$ with $r_J \ge \sqrt{t}$. 
We now construct the sequence $(\cG_j)_{0 \le j \le J}$ with $\cG_J=\cH_p$ and  $\Delta_1(\cG_j) \le \floor{r_j}$. 
For $\cG_J=\cH_p$, observe that~\eqref{eq:Nj} and $\beta \le 1 \le s$ imply $M_{r_j}(\cH_p) < \beta \le 1$ for all $r_j \ge \sqrt{t}$. 
Hence, since $\Delta_1(\cH_p) \ge \ceil{r_j}$ implies $M_{r_j}(\cH_p) \ge 1$, it follows that $\Delta_1(\cG_J) = \Delta_1(\cH_p) \le \ceil{r_J}-1 \le \floor{r_J}$. 
Given $\cG_{j+1}$ with $0 \le j < J$, we fix some $\cM \subseteq \cT_{\ceil{r_j}}(\cG_{j+1})$ which attains the maximum in \eqref{def:Mr}, so that $|\cM|=M_{r_j}(\cG_{j+1}) \le  M_{r_j}(\cG_{J}) = M_{r_j}(\cH_p)$ by monotonicity. 
We remove all edges from $\cG_{j+1}$ which contain at least one vertex from some $r_j$-star $S \in \cM$, and denote the resulting hypergraph by~$\cG_j$. 
Hence $\Delta_1(\cG_j) \le \ceil{r_j}-1 \le \floor{r_j}$, because otherwise we could add another $r_j$-star to $\cM$ (contradicting the maximality of~$|\cM|$). 

Next we estimate $X=e(\cH_p)$ in terms of the hypergraph sequence $(\cG_j)_{0 \le j \le J}$. Since each $r_j$-star consists of $\ceil{r_j}$ edges, for $0 \le j < J$ it follows by construction and monotonicity (using $M_{r_j}(\cG_{j+1}) \le  M_{r_j}(\cH_p)$, $\ceil{r_j} \le r_j+1 \le 2 r_j$ and $\Delta_1(\cG_{j+1}) \le r_{j+1} = 2 r_j$) that 
\[
e(\cG_{j+1})-e(\cG_{j}) \le M_{r_j}(\cG_{j+1}) \cdot \ceil{r_j} \max_{f \in \cH} |f| \cdot \Delta_1(\cG_{j+1}) \le M_{r_j}(\cH_{p}) \cdot 4k r_j^2. 
\]
Hence, using $\cH_p=\cG_J$, \eqref{eq:Njs}--\eqref{eq:Nj} and $\max_{0 \le j < J} r_j \le \sqrt{t}$ we readily obtain 
\[
X = e(\cG_J) \le e(\cG_0) + 4k\sum_{0 \le j < J} M_{r_j}(\cH_{p}) r_j^2 \le e(\cG_0) + 4\beta k \sqrt{t}\Bigl(s\sum_{\substack{0 \le j < J:\\ r_j \le \sqrt{t}/s}}r_j + \sum_{\substack{0 \le j < J:\\ \sqrt{t}/s \le r_j \le \sqrt{t}}}r_j\Bigr) .
\]
For any $z > 0$, in view of $r_{j}=2^jr$ it is easy to see that 
\begin{equation}\label{eq:sum}
\sum_{j \in \NN: r_j \le z} r_j = z \sum_{j \in \NN: r_j \le z} r_j/z \le z \sum_{j \in \NN} 2^{-j} = 2z .
\end{equation}
Thus, noting that $\Delta_1(\cG_0) \le \floor{r_0} \le r$ implies $e(\cG_0) \le X_r$, using $\beta \le 1/(32k)$ it follows that 
\[
X \le e(\cG_0) + 16\beta k t \le X_r + t/2 ,
\]
completing the proof.  
\end{proof} 
In view of Lemma~\ref{lem:Xr} and~\ref{lem:approx}, we now focus on the probability of the event~$\neg\cT(\beta,\gamma,r,t)$. 
Ignoring some technical assumptions (which are similar to those of Lemma~\ref{lem:MrH}), the following result essentially states that $\Pr(\neg\cT(\beta,\gamma,r,t))$ is negligible for our purposes 
(the $1/n$ prefactor in~\eqref{eq:T} is ad-hoc, and eventually becomes the usually irrelevant $n^{-1}$ term in~\eqref{eq:thm:HG:UB} of Theorem~\ref{thm:HG}). 
\begin{lemma}\label{lem:T}
Given $k \ge 3$, $a >0$ and $D \ge 1$, 
let $\cH=\cH_n$ be a $k$-uniform hypergraph satisfying $v(\cH) \le Dn$, $e(\cH) \ge an^2$ and $\Delta_2(\cH) \le D$. 
Set $X=e(\cH_p)$, $\mu = \E X$ and $\varphi(x)=(1+x)\log(1+x)-x$. 
Then there are $B,n_0 \ge 1$ (depending on $k,D$), such that for all $n \ge n_0$, $p \in (0,1]$, $\beta \in (0,1]$, $\gamma \in (0,1/8]$, and $r,t>0$ satisfying~\eqref{eq:T:cond}
we have
\begin{equation}\label{eq:T}
\Pr(\neg\cT(\beta,\gamma,r,t)) 
\le \frac{1}{n} \exp\left(-\frac{\min\{a,\beta\}}{2kD} \min\left\{\frac{\varphi(t/\mu)\mu^2}{\Lambda},\sqrt{t}s\right\}\right) .
\end{equation}
\end{lemma}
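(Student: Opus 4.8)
The plan is a union bound over the relevant indices $j \in \NN$, estimating each of the events in the definition~\eqref{eq:Njs}--\eqref{eq:Nj} of $\cT(\beta,\gamma,r,t)$ via Lemma~\ref{lem:MrH}. Writing $y_j = \beta\sqrt{t}s/r_j$ when $r_j < \sqrt{t}/s$ and $y_j = \beta\sqrt{t}/r_j$ when $r_j \ge \sqrt{t}/s$, the event $\neg\cT(\beta,\gamma,r,t)$ is the union over $j \in \NN$ of the events $\{M_{r_j}(\cH_p) \ge y_j\}$. Since $M_{r_j}(\cH_p) \ge 1$ forces $\Delta_1(\cH_p) \ge \ceil{r_j}$ and $\Delta_1(\cH) \le v(\cH)\Delta_2(\cH) \le D^2 n$, only indices $j$ with $r_j = 2^j r \le D^2 n$ can give a nonempty event; as $r_j \ge r$ and~\eqref{eq:T:cond} keeps $r$ away from $0$ (for $n \ge n_0$), this is a set of at most $N = O(\log n)$ indices. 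First I would record the two consequences of~\eqref{eq:T:cond} that drive everything: it forces $np^{k-1}/r < 1/B \le 1$, hence $r_j \ge r > np^{k-1}$ and
\[
\log\frac{e r_j}{np^{k-1}} = 1 + \log\frac{r_j}{np^{k-1}} \ge 1 + \log B \ge 1 \qquad (j \in \NN),
\]
and, since $s = \log(e p^{-\gamma})$, one has $\log(e r_j/(np^{k-1})) \ge s$ precisely when $r_j \ge np^{k-1-\gamma}$. For each relevant $j$, Lemma~\ref{lem:MrH} applies (as $r_j \ge r$ and~\eqref{eq:T:cond} holds) and, using $\max\{y_j,1\}^{3/2} \ge 1$ together with $r_j y_j = \beta\sqrt{t}s$ resp.\ $\beta\sqrt{t}$, yields
\[
\Pr\bigl(M_{r_j}(\cH_p) \ge y_j\bigr) \le \frac{1}{n^2}\left(\frac{np^{k-1}}{e r_j}\right)^{r_j y_j/(2kD)} = \frac{1}{n^2}\exp\left(-\frac{r_j y_j}{2kD}\log\frac{e r_j}{np^{k-1}}\right).
\]

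Two of the three cases are immediate. If $r_j < \sqrt{t}/s$ then $r_j y_j = \beta\sqrt{t}s$ and the exponent is at least $\beta\sqrt{t}s/(2kD) \ge \tfrac{\min\{a,\beta\}}{2kD}\sqrt{t}s$, hence at least the target exponent $\tfrac{\min\{a,\beta\}}{2kD}\min\{\varphi(t/\mu)\mu^2/\Lambda,\sqrt{t}s\}$. If $r_j \ge \sqrt{t}/s$ and $r_j \ge np^{k-1-\gamma}$, then $\log(er_j/(np^{k-1})) \ge s$ and again the exponent is at least $\beta\sqrt{t}s/(2kD)$. The remaining case is $r_j \ge \sqrt{t}/s$ together with $r_j < np^{k-1-\gamma}$; combined with $r_j \ge r > Bnp^{k-1}$ this forces $p^\gamma < 1/B$ (so $p$ is bounded away from $1$) and $\sqrt{t} < snp^{k-1-\gamma}$, whence, using $\mu \ge an^2p^k$ and $k \ge 3$, $\gamma \le 1/8$ exactly as in~\eqref{eq:thm:proofUBp:cond}, $t/\mu = O\bigl(s^2 p^{(k-2)-2\gamma}/a\bigr)$ is small. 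In this regime I would bound $\varphi(t/\mu)\mu^2/\Lambda$ from above by combining $\varphi(x) \le \min\{x^2/2,\, x\log(1+x)\}$ with $\mu \ge an^2p^k$, $\Lambda = \mu(1+np^{k-1})$ and the above smallness of $t$, and then verify — via a careful but elementary computation balancing $a$, $\beta$, $B = B(k,D)$, $D$, $k$ against the factor $\log(er_j/(np^{k-1})) \ge 1+\log B$ — that $\tfrac{\beta\sqrt{t}}{2kD}\log(er_j/(np^{k-1}))$ still dominates $\tfrac{\min\{a,\beta\}}{2kD}\varphi(t/\mu)\mu^2/\Lambda$. (In the sub-case where the target exponent is itself $\le\tfrac12\log n$, the trivial bound $\Pr(\neg\cT)\le N/n^2$ already suffices, and a pathologically tiny $p$ is dealt with by the even cruder $\Pr(\neg\cT) \le \Pr(V_p(\cH)\ne\emptyset) \le Dnp$.)

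This last case is the main obstacle: one must match the normal-approximation exponent $\varphi(t/\mu)\mu^2/\Lambda$ against the interval-clustering quantity $\sqrt{t}\log(er_j/(np^{k-1}))$ when $p$ is small and $r_j$ is only moderately above $np^{k-1}$, and essentially all of the constant-bookkeeping of the proof lives there, refining the computation already carried out in the proof of Theorem~\ref{thm:HGp}. Once every one of the at most $N$ terms is bounded by $\tfrac{1}{n^2}\exp\bigl(-\tfrac{\min\{a,\beta\}}{2kD}\min\{\varphi(t/\mu)\mu^2/\Lambda,\sqrt{t}s\}\bigr)$, summing gives $\Pr(\neg\cT(\beta,\gamma,r,t)) \le \tfrac{N}{n^2}\exp(\cdots) \le \tfrac1n\exp(\cdots)$ for $n \ge n_0(k,D)$, which is~\eqref{eq:T}. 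It is precisely the surplus factor $n^{-2}$ supplied by Lemma~\ref{lem:MrH} (instead of the $n^{-1}$ that a bare union bound over $v \in V(\cH)$ would give) that absorbs the $O(\log n)$ loss and leaves the stated $n^{-1}$ prefactor.
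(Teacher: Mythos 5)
Your union bound over $j$ and your first two cases go through exactly as in the paper: the surplus factor $n^{-2}$ from Lemma~\ref{lem:MrH} absorbs the $O(\log n)$ loss, and if $r_j < \sqrt{t}/s$ then $r_jy_j = \beta\sqrt{t}s$ gives exponent at least $\beta\sqrt{t}s/(2kD)$, while if $r_j\ge\max\{\sqrt{t}/s,\, np^{k-1-\gamma}\}$ then $\log(er_j/(np^{k-1}))\ge s$. The genuine gap is the third case, precisely the step you defer to a ``careful but elementary computation.'' There the term-by-term exponent is $\frac{\beta}{2kD}\sqrt{t}\log(er_j/(np^{k-1}))$ with the logarithm possibly as small as $1+\log B$, and you must compare it against the target $\frac{\min\{a,\beta\}}{2kD}\,\varphi(t/\mu)\mu^2/\Lambda$. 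The estimate you sketch yields $\varphi(t/\mu)\mu^2/(\Lambda\sqrt{t}) \le s^3p^{k-2-3\gamma}/a$, which via $p^\gamma<1/B$ is at most $C_0(k,D)/a$ for a fixed $C_0$; the required inequality therefore reduces to $\max\{a,\beta\}\,(1+\log B)\ge C_0(k,D)$, where $B$ and $C_0$ depend only on $k,D$ but $a>0$ and $\beta\in(0,1]$ are arbitrary. This is not implied by the hypotheses, so the term-by-term comparison fails whenever both $a$ and $\beta$ lie below a $(k,D)$-dependent threshold. Your fallback observations do not cover this: in the regime at issue $p$ can be a fixed constant below $B^{-1/\gamma}$, so the target exponent can be of order $n$ while $Dnp$ is also of order $n$.

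The reason \eqref{eq:T} is nonetheless true is a structural observation your proposal misses. Since $M_{r_0}(\cH_p)=0$ forces $M_{r_j}(\cH_p)=0$ for every $j\ge0$, one always has $\neg\cT\subseteq\{M_r(\cH_p)\ge1\}$; the paper sets $\Phi=\varphi(t/\mu)\mu/(np^{k-1})\ge\varphi(t/\mu)\mu^2/\Lambda$ and splits on $r\ge a\Phi$ versus $r<a\Phi$. When $r\ge a\Phi$ it does \emph{not} take a union bound but estimates the single containing event: $\Pr(\neg\cT)\le\Pr(M_r(\cH_p)\ge1)\le n^{-1}\bigl(np^{k-1}/(er)\bigr)^{r/(2kD)}\le n^{-1}e^{-r/(2kD)}\le n^{-1}e^{-a\Phi/(2kD)}$, whose exponent carries the factor $a$ (not $\beta$) and matches the target directly. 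When $r<a\Phi$, the chain $t^2\ge\varphi(t/\mu)\mu^2=\Phi\,\mu\,np^{k-1}\ge n^4p^{3k-2}$ (using $\varphi(x)\le x^2$, $\Phi>np^{k-1}/a$, $\mu\ge an^2p^k$) gives $np^{k-1}s/(e\sqrt{t})\le p^{(k-2)/4}s/e\le p^\gamma/e=e^{-s}$ for $k\ge3$, $\gamma\le1/8$, so every $r_j\ge\sqrt{t}/s$ automatically satisfies $\log(er_j/(np^{k-1}))\ge s$ --- your problematic third range is \emph{empty} in this regime, and the union bound then succeeds exactly as in your two easy cases. Without this case split on $r$ versus $a\Phi$, a blanket term-by-term union bound cannot deliver the stated exponent.
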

The definition of $\cT(\beta,\gamma,r,t)$ is, in some sense, already a significant part of the proof. 
Indeed, writing $C=2kD$, our argument hinges on the fact that~\eqref{eq:MrH} of Lemma~\ref{lem:MrH} yields, in our case, a bound of the form  
\[
\Pr(M_{r_j}(\cH_p) \ge y) \le \frac{1}{n^2}\min\left\{e^{-r_j y/C}, \: \left(\frac{np^{k-1}}{er_j}\right)^{r_j y/C}\right\} .
\] 
Hence  
$\Pr(M_{r_j}(\cH_p) \ge \beta\sqrt{t}s/r_j) \le n^{-2}e^{-\beta\sqrt{t}s/C}$. 
Furthermore, for $r_j \ge \sqrt{t}/s$ it turns out that usually $np^{k-1}/(er_j) \le p^{\gamma}/e = e^{-s}$ holds, 
so $\Pr(M_{r_j}(\cH_p) \ge \beta\sqrt{t}/r_j) \le n^{-2} e^{-\beta\sqrt{t}s/C}$ by `overshooting'. 
Recalling~\eqref{eq:Njs}--\eqref{eq:Nj}, using a careful union bound argument this reasoning eventually establishes inequality~\eqref{eq:T}.  
\begin{proof}[Proof of Lemma~\ref{lem:T}]
Let $C=2kD$. We use $B=B(k,D) \ge 1$ as given by Lemma~\ref{lem:MrH}, so that~\eqref{eq:MrH} holds for all $x=r_j$ and $y>0$. 
Note that~\eqref{eq:T:cond} entails $r \ge Bnp^{k-1} \ge np^{k-1}$.  
With \eqref{eq:MrH} in hand, we now estimate $\Pr(\neg\cT(\beta,\gamma,r,t))$ by a delicate union bound argument. 
With foresight, we first assume~$r \ge a\Phi$, where 
\begin{equation}\label{eq:T:Psi}
\Phi = \frac{\varphi(t/\mu)\mu}{np^{k-1}} .
\end{equation}
Note that $M_{r_0}(\cH_p)=0$ entails $M_{r_j}(\cH_p) =0$ for all $j \ge 0$, which in view of \eqref{eq:Njs} and \eqref{eq:Nj} implies $\cT(\beta,\gamma,r,t)$.  
Hence, using $r_0 = r \ge \max\{np^{k-1}, a\Phi\}$ and~\eqref{eq:MrH}, we infer 
\begin{equation}\label{eq:T:1}
\Pr(\neg\cT(\beta,\gamma,r,t)) \le \Pr(M_{r_0}(\cH_p) > 0) = \Pr(M_{r}(\cH_p) \ge 1) \le 
\frac{1}{n} \left(\frac{np^{k-1}}{e r}\right)^{r/C} \le \frac{1}{n} \exp\Bigl(-a\Phi/C\Bigr) . 
\end{equation}
We henceforth assume~$r < a\Phi$. 
Using Lemma~\ref{lem:MrH}, $r_j=2^jr \ge np^{k-1}$ and $s \ge 1$, we infer for $n \ge n_0(\beta)$ that 
\begin{equation}\label{eq:T:sum1}
\begin{split}
\Pr(\text{\eqref{eq:Njs} fails}) & \le \sum_{j \in \NN: r_j \le \sqrt{t}/s} \Pr(M_{r_j}(\cH_p) \ge \ceil{\beta\sqrt{t}s/r_j}) \\
& \le \sum_{j \in \NN: r_j \le \sqrt{t}/s} \frac{r_j^{3/2}}{n^2(\beta\sqrt{t}s)^{3/2}} \cdot \exp\Bigl(-\beta\sqrt{t}s/C\Bigr) \le \frac{1}{2n} \exp\Bigl(-\beta\sqrt{t}s/C\Bigr),
\end{split}
\end{equation}
where the last inequality follows analogously to~\eqref{eq:sum}. 
Observing that $M_{r_{j+1}}(\cH_p) \ge 1$ implies $M_{r_{j}}(\cH_p) \ge 1$, for $n \ge n_0(\beta)$ a similar argument (exploiting that $r_j \ge \sqrt{t}$ implies $\beta \sqrt{t}/r_j \le \beta \le 1$) yields
\begin{equation}\label{eq:T:sum2}
\begin{split}
\Pr(\text{\eqref{eq:Nj} fails}) & \le \sum_{j \in \NN: \sqrt{t}/s \le r_j \le \max\{2\sqrt{t},r\}} \Pr(M_{r_j}(\cH_p) \ge \ceil{\beta\sqrt{t}/r_j}) \\
& \le \frac{1}{2n} \max_{j \in \NN: r_j \ge \sqrt{t}/s} \left(\frac{np^{k-1}}{e r_j}\right)^{\beta \sqrt{t}/C} \le \frac{1}{2n} \left(\frac{np^{k-1}s}{e \sqrt{t}}\right)^{\beta \sqrt{t}/C} .
\end{split}
\end{equation}
(To clarify: the condition $r_j \le \max\{2\sqrt{t},r\}$ ensures that the considered range of~$r_j$ is non-empty.) 
In the following we exploit the assumption $r < a\Phi$ to further estimate~\eqref{eq:T:sum2}. 
Note that $\log(1+x)\le x$ implies
\begin{gather}
\label{eq:varphi:x3}
\varphi(x) = (1+x)\log(1+x)-x \le x^2 .
\end{gather}
In view of~\eqref{eq:T:Psi} and~\eqref{eq:varphi:x3}, using $\Phi > r/a \ge np^{k-1}/a$ and $\mu = e(\cH)p^k \ge an^2p^k$ we deduce 
\begin{equation}\label{eq:T:t}
t^2 \ge \varphi(t/\mu)\mu^2 = \Phi \mu np^{k-1} \ge n^4 p^{3k-2}.
\end{equation}
Since $k \ge 3$ and $\gamma \le  1/8$ (in fact, $\gamma \le (k-2)/8$ suffices), using \eqref{eq:T:t} and~\eqref{eq:ps} we obtain 
\begin{equation}\label{eq:T:max2}
\frac{np^{k-1}s}{e \sqrt{t}} \le \frac{p^{(k-2)/4}s}{e} \le \frac{p^{(k-2)/4-\gamma}}{e} \le \frac{p^{1/4-\gamma}}{e} \le \frac{p^{\gamma}}{e} = e^{-s} .
\end{equation}
Now, inserting \eqref{eq:T:max2} into \eqref{eq:T:sum2}, in view of~\eqref{eq:T:sum1} we infer (for $r < a\Phi$) that 
\begin{equation*}\label{eq:T:2}
\Pr(\neg\cT(\beta,\gamma,r,t)) = \Pr(\text{\eqref{eq:Njs} or \eqref{eq:Nj} fails}) \le \frac{1}{n} \exp\Bigl(-\beta\sqrt{t}s/C\Bigr) ,
\end{equation*}
which together with \eqref{eq:T:1}, $C=kD$ and $\Phi\ge \varphi(t/\mu)\mu^2/\Lambda$ completes the proof of~\eqref{eq:T}. 
\end{proof}

We are now ready to prove the upper bound of Theorem~\ref{thm:HG}, and 
our main remaining task is to pick a suitable parameter~$r$. 
Here the technical condition~\eqref{eq:T:cond} prevents the natural choice $r=C\Lambda/\mu = \Theta(1+np^{k-1})$ when $np^{k-1} \approx 1$, 
which explains the more involved form of~$r$ in the next proof 
(this complication is only needed in the pedestrian case~(iii) below). 
\begin{proof}[Proof of~\eqref{eq:thm:HG:UB} of Theorem~\ref{thm:HG}] 
It suffices to consider the following three cases: (i)~$p \ge \gamma n^{-1/(k-1)}(\log n)^{1/(k-1)}$, (ii)~$p \le n^{-1/(k-1)-\gamma}$, and (iii)~$t \ge \min\bigl\{\gamma\min\{(\Var X)^{2/3},\mu^{2/3}\}(\log n)^{4/3},\mu p^{(k-2)/3-\gamma}\bigr\}$. 
Of course, in all cases we may assume $\gamma \le 1/8$ (decreasing $\gamma$ yields less restrictive assumptions), and in case~(iii) we may also assume $n^{-1/(k-1)-\gamma} \le p \le n^{-1/(2k)}$, say (otherwise case~(i) or~(ii) applies). 
We start by introducing several parameters. 
By Remark~\ref{rem:Var} there is a constant $b=b(k,a,D) \in (0,1]$ such that for all $p \in [0,1/2]$ we have 
\begin{equation}\label{eq:Var:b}
\Var X \ge b \Lambda .
\end{equation}
Let $\beta=1/(32 k)$. Define $s=s(\gamma)$ as in \eqref{eq:par}, and set 
\begin{equation*}\label{eq:HG:rdef}
r=A\tr, \quad 
A = \max\biggl\{\frac{3 B}{\min\{1,a^{1/2},b\}}, \: \frac{32k^2D}{\gamma^{k-1}}, \: \frac{24kD}{{\min\{1,a^{1/2},b\}}\gamma^{3/2}} \biggr\},  
 \quad \text{and} \quad  \tr=\max\biggl\{\frac{\Lambda}{\mu}, \: \frac{\varphi(t/\mu)\mu}{\sqrt{t}s}\biggr\}  ,
\end{equation*}
where $B=B(k,D)$ is as in Lemma~\ref{lem:T}. 
We defer the proof of the claim that $r$ satisfies the technical condition~\eqref{eq:T:cond}, and first apply Lemmas~\ref{lem:Xr} and~\ref{lem:approx}--\ref{lem:T}.
So, using the definition of~$r$, it follows that  
\begin{equation*}
\begin{split}
\Pr(X \ge \mu+t) & \le \Pr(X_r \ge \mu + t/2) + \Pr(\neg\cT(\beta,\gamma,r,t))\\
& \le \exp\left(-\frac{\varphi(t/\mu) \mu}{4kr} \right) + \frac{1}{n} \exp\left(-\frac{\min\{a,\beta\}}{2kD} \min\left\{\frac{\varphi(t/\mu)\mu^2}{\Lambda},\sqrt{t}s\right\}\right) \\
& \le (1+n^{-1}) \exp\left(-\frac{\min\{a,\beta,1\}}{4kA} \min\left\{\frac{\varphi(t/\mu)\mu^2}{\Lambda},\sqrt{t}s\right\}\right) .
\end{split}
\end{equation*}
Since $s=\log(e/p^{\gamma}) \ge \gamma\log(e/p)$, this establishes~\eqref{eq:thm:HG:UB} with $c=\gamma\min\{a,\beta,1\}/(4kA)$. 

In the remainder we verify the technical condition~\eqref{eq:T:cond}. For later reference, note that 
\begin{equation}\label{eq:HG:tr:lb}
\tr \ge \Lambda/\mu \ge \max\{np^{k-1}, 1\} .
\end{equation}
Recalling $r = A \tr$, in case~(i) we have $r \ge Anp^{k-1} \ge A\gamma^{k-1} \log n$, and in case~(ii) we have $np^{k-1} \le n^{-(k-1)\gamma}$ and $r  \ge A$. 
In both cases, using $r \ge \max\{eBnp^{k-1},B\}$ and $r \ge A \ge 8kD/\gamma^{k-1}$ we infer that %(similarly to~\eqref{eq:HGp:r}) that 
\begin{equation}\label{eq:HG:r1r2}
\bigl(Bnp^{k-1}/r\bigr)^{r} \le \min\bigl\{e^{-r}, \: (np^{k-1})^r\bigr\} \le \max\bigl\{n^{-A \gamma^{k-1}}, \: n^{-A (k-1)\gamma}\bigr\} \le n^{-8kD} . 
\end{equation}
The remaining case~(iii) requires somewhat tedious case distinctions. Recalling~\eqref{eq:varphi:x4}, it follows that 
\begin{equation}\label{eq:tr:bounds}
\tr \ge \frac{\varphi(t/\mu)\mu}{\sqrt{t}s} \ge \frac{\min\{t^{1/2},t^{3/2}/\mu\}}{3s} \ge \indic{t \ge \mu}\frac{\mu^{1/2}}{3s} + \indic{t < \mu}\frac{t^{3/2}}{3\mu s}  .
\end{equation}
With foresight, note that \eqref{eq:ps} and $p \ge n^{-1/(k-1)-\gamma}$ imply, for $n \ge n_0$, that 
\begin{equation}\label{eq:s:bounds}
s=\log(e/p^{\gamma}) \le \min\{1+\gamma\log(1/p), \: p^{-\gamma}\} \le \min\{\log n, \: p^{-\gamma}\} .
\end{equation}
Using~\eqref{eq:Var:b} and $p = o(1)$ we have $\Var X \ge b \mu$, where $b \in (0,1]$.  
Combining this estimate with the  assumed lower bound for~$t$ in the case~(iii), using $\mu =e(\cH)p^{k} \ge an^2p^k$ and~\eqref{eq:s:bounds} it follows that 
\begin{equation}\label{eq:tms}
\frac{t^{3/2}}{\mu s} \ge \min\left\{\frac{\gamma^{3/2} b (\log n)^2}{s}, \:  \frac{\mu^{1/2}p^{(k-2)/2-3\gamma/2}}{s}\right\} \ge \min\left\{\gamma^{3/2}b\log n, \: a^{1/2}np^{k-1-\gamma/2}\right\}.
\end{equation}
Since $k \ge 3$ and $\gamma \le 1/8$ imply $1 \ge p^{(k-2)/2-3\gamma/2}$, note that the final expression in~\eqref{eq:tms} is also a lower bound for~$\mu^{1/2}/s$. 
In view of~\eqref{eq:tr:bounds}, we thus infer 
\begin{equation}\label{eq:HG:tr:lb2}
\tr \ge 3^{-1}\min\{a^{1/2},b\} \cdot \min\bigl\{\gamma^{3/2}\log n, \: np^{k-1-\gamma/2}\bigr\} .
\end{equation}  
If the minimum in~\eqref{eq:HG:tr:lb2} is attained by the $\gamma^{3/2}\log n$ term, then $r = A \tr \ge e B \tr$ and~\eqref{eq:HG:tr:lb} imply $(Bnp^{k-1}/r)^r \le e^{-r}=e^{-A \tr}$, 
so that $A \tr \ge 8kD \log n$ establishes~\eqref{eq:T:cond}. 
Otherwise the  minimum in~\eqref{eq:HG:tr:lb2} is attained by the $np^{k-1-\gamma/2}$ term, in which case $r=A \tr$ implies $(Bnp^{k-1}/r)^r \le (p^{\gamma/2})^r$ by choice of~$A$. 
Using $p \le n^{-1/(2k)}$ and $r=A \tr \ge A \ge 32 k^2 D/\gamma$, this readily establishes~\eqref{eq:T:cond}, completing the proof.  
\end{proof}

\section{Lower bounds}\label{sec:LB}
In this section we establish the lower bounds \eqref{eq:thm:HGp:LB} and \eqref{eq:thm:HG:LB} of Theorem~\ref{thm:HGp} and~\ref{thm:HG}. 
The proofs are based on three different `configurations' of the vertices in $V_p(\cH)$, which each yield a distinct lower bound for the upper tail of $X=e(\cH_p)$. 
The heuristic idea is that 
one of them should hopefully always approximate the most likely way to obtain $X \approx (1+\eps)\mu$ or $X \approx \mu+t$, respectively. 
In brief, we shall use configurations where many edges cluster on few vertices (Section~\ref{sec:LB:constr}), where many edges arise disjointly (Section~\ref{sec:LB:disj}), or where there are overall too many vertices (Section~\ref{sec:LB:vx}). 
Here one main novelty is on a conceptual level: in contrast to previous work we obtain, in a wide range, the correct dependence on $t=\eps \mu$. % (we shall also establish lower bounds of sub-Gaussian type). 

\subsection{Configurations with clustering}\label{sec:LB:constr}
The first lower bound is based on property $\fX(\cH,D,x)$ defined in \eqref{P}, which intuitively states that many edges can cluster on comparatively few vertices. 
In other words, enforcing $W \subseteq V_p(\cH)$ for a reasonably small set of vertices $W$ is enough to guarantee that the number of induced edges $X=e(\cH_p)=e(\cH[V_p(\cH)])$ is fairly large. 
A related approach was taken in~\cite{UTAP} and~\cite{UTSG} for arithmetic progressions and subgraphs, respectively.  
\begin{theorem}\label{thm:LB:constrp} 
Given a hypergraph $\cH$, set $X=e(\cH_p)$ and $\mu=\E X$. 
For all $D \ge 1$, $p \in (0,1]$ and $t \ge 0$ satisfying $\fX(\cH,D,\mu+t)$ and $\mu+t \ge 1$ we have 
\begin{equation}\label{eq:thm:LB:constrp}
\Pr(X \ge \mu+t) \ge \exp\Bigl(-D \sqrt{\mu+t} \log(1/p)\Bigr) .
\end{equation}
\end{theorem}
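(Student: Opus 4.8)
The plan is to produce the lower bound by \emph{forcing} a small ``clustering'' set of vertices into the random sample $V_p(\cH)$. Concretely, I would first invoke property $\fX(\cH,D,\mu+t)$ with the parameter $x=\mu+t$. Since the hypothesis $\mu+t\ge 1$ gives $\max\{\sqrt{\mu+t},1\}=\sqrt{\mu+t}$, this provides a set $W\subseteq V(\cH)$ with $|W|\le D\sqrt{\mu+t}$ and $e(\cH[W])\ge\mu+t$.

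Next I would condition on the event $\{W\subseteq V_p(\cH)\}$. Because each vertex is included independently with probability $p$, this event has probability exactly $p^{|W|}$; and since $p\in(0,1]$ and $|W|\le D\sqrt{\mu+t}$, monotonicity of $p^{(\cdot)}$ yields $p^{|W|}\ge p^{D\sqrt{\mu+t}}=\exp\bigl(-D\sqrt{\mu+t}\log(1/p)\bigr)$. On this event every edge of $\cH$ contained in $W$ is also contained in $V_p(\cH)$, so $\cH[W]$ is a subhypergraph of $\cH_p=\cH[V_p(\cH)]$, whence $X=e(\cH_p)\ge e(\cH[W])\ge\mu+t$. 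Combining the two observations, $\{W\subseteq V_p(\cH)\}\subseteq\{X\ge\mu+t\}$, so $\Pr(X\ge\mu+t)\ge p^{|W|}\ge\exp(-D\sqrt{\mu+t}\log(1/p))$, which is exactly~\eqref{eq:thm:LB:constrp}.

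There is no real obstacle here: the statement is essentially a book-keeping version of the heuristic in the introduction (an interval $[m]$ with $m=\Theta(\sqrt{\mu})$ contains $\Theta(m^2)=\Theta(\mu)$ arithmetic progressions, and forcing $[m]\subseteq[n]_p$ costs $p^m=e^{-\Theta(\sqrt\mu\log(1/p))}$), with $\fX$ abstracting the existence of such a ``cheap'' clustering configuration. The only two points needing (minor) care are (i) using $\mu+t\ge1$ to drop the $\max\{\cdot,1\}$ in the size bound $|W|\le D\max\{\sqrt{\mu+t},1\}$, and (ii) the inequality $p^{|W|}\ge p^{D\sqrt{\mu+t}}$, which relies on $p\le 1$; everything else is a deterministic containment of hypergraphs.
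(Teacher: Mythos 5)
Your proposal is correct and matches the paper's proof exactly: invoke $\fX(\cH,D,\mu+t)$ to extract $W$ with $|W|\le D\sqrt{\mu+t}$ (using $\mu+t\ge1$ to drop the $\max$), observe that $W\subseteq V_p(\cH)$ forces $X\ge e(\cH[W])\ge\mu+t$, and bound $\Pr(W\subseteq V_p(\cH))=p^{|W|}\ge p^{D\sqrt{\mu+t}}$. Nothing further to add.
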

\begin{proof}
By $\fX(\cH,D,\mu+t)$ there is $W \subseteq V(\cH)$ satisfying $|W| \le D \sqrt{\mu+t}$ and $e(\cH[W]) \ge \mu+t$. 
Hence 
\[
\Pr(X \ge \mu+t) \ge \Pr(W \subseteq V_p(\cH)) = p^{|W|} \ge p^{D\sqrt{\mu+t}} ,
\]
completing the proof. 
\end{proof}
Using a new `local' variant of the above argument we now improve the $\sqrt{\mu+t}$ in the exponent of~\eqref{eq:thm:LB:constrp} to~$\sqrt{t}$, which is crucial when~$t = o(\mu)$. 
The basic idea is to `create' at least $\mu+t$ edges as follows: (i)~first we use the above clustering construction to `locally' enforce, say, $2t$ edges, and (ii)~then we use correlation inequalities and a one-sided version of Chebyshev's inequality to show that typically at least $\mu-t$ of the \emph{remaining} $r=e(\cH)-2t$ edges are present in~$\cH_p$. (The crux is that the \emph{expected} number of remaining edges is at least $rp^k=\mu-2tp^k$.) 
This approach seems of independent interest, and a similar reasoning can, e.g., be used to refine the lower bounds for subgraph counts obtained by Janson, Oleszkiewicz and Ruci{\'n}ski~\cite{UTSG}. 
\begin{theorem}\label{thm:LB:constr} 
Given $k \ge 2$, $a>0$ and $D \ge 1$, 
let $\cH=\cH_n$ be a $k$-uniform hypergraph satisfying $v(\cH) \le Dn$, $e(\cH) \ge an^2$ and $\Delta_2(\cH) \le D$. 
Set $X=e(\cH_p)$, $\mu=\E X$ and $\Lambda = \mu (1+np^{k-1})$. 
Given $\alpha \in (0,1)$, there are $n_0>0$ (depending only on $k,a,D$) and $c,\lambda \ge 1$ (depending only on $\alpha,k,a,D$) such that for all $n \ge n_0$, $p \in (0,1-\alpha]$ and $t \ge \indic{\mu \ge 1/2}\min\{\sqrt{\Var X},\sqrt{\Lambda}\}$ satisfying $\fX(\cH,D,\min\{\lambda t,\mu+t\})$ and $\mu+t \ge 1$ we have 
\begin{equation}\label{eq:thm:LB:constr}
\Pr(X \ge \mu+t) \ge \exp\Bigl(-c \sqrt{t}\log(1/p)\Bigr) .
\end{equation}
\end{theorem}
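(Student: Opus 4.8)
The plan is a \emph{local} refinement of the clustering construction of Theorem~\ref{thm:LB:constrp}: instead of forcing $W\subseteq V_p(\cH)$ for a set $W$ inducing at least $\mu+t$ edges, we force only a $W$ inducing somewhat more than $t$ edges, and then exploit that the \emph{remaining} edges of $\cH$ still have expectation close to $\mu$, hence (by a variance estimate) exceed $\mu-t$ with constant probability. Set $\lambda:=\ceil{2/\alpha}$ and invoke $\fX(\cH,D,x)$ with $x:=\min\{\lambda t,\mu+t\}$ to get $W\subseteq V(\cH)$ with $m:=e(\cH[W])\ge x$ and $|W|\le D\max\{\sqrt{x},1\}$. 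Viewing the randomness as the independent indicators $(\indic{v\in V_p(\cH)})_{v\in V(\cH)}$, write $X=X_1+X_2$, where $X_1:=|\{f\in\cH[W]:f\subseteq V_p(\cH)\}|$ and $X_2:=|\{f\in\cH\setminus\cH[W]:f\subseteq V_p(\cH)\}|$ are both increasing functions of these indicators. On the event $\cA:=\{W\subseteq V_p(\cH)\}$ every edge of $\cH[W]$ is induced, so $X_1=m$ and hence $\{X\ge\mu+t\}\supseteq\cA\cap\{X_2\ge\mu+t-m\}$. Since $\cA$ and $\{X_2\ge\mu+t-m\}$ are increasing, Harris' inequality~\cite{Harris1960} gives
\[
\Pr(X\ge\mu+t)\ \ge\ \Pr(\cA)\,\Pr(X_2\ge\mu+t-m)\ =\ p^{|W|}\,\Pr(X_2\ge\mu+t-m) .
\]

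If $m\ge\mu+t$ then $\mu+t-m\le0$ and $\Pr(X_2\ge\mu+t-m)=1$, so we may assume $m<\mu+t$; then $m\ge x$ together with $m<\mu+t$ forces $x=\lambda t$, whence $m\ge\lambda t$. As $p\le1-\alpha$ yields $1-p^k\ge1-p\ge\alpha$, the choice of $\lambda$ gives $m(1-p^k)\ge\lambda\alpha t\ge2t$, so with $\E X_2=(e(\cH)-m)p^k=\mu-mp^k$ we obtain
\[
\mu+t-m\ =\ \E X_2-\bigl(m(1-p^k)-t\bigr)\ \le\ \E X_2-t .
\]
Hence, by the one-sided Chebyshev (Cantelli) inequality, $\Pr(X_2\ge\mu+t-m)\ge\Pr(X_2\ge\E X_2-t)\ge t^2/(\Var X_2+t^2)$.

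It remains to show $\Var X_2\le C_0 t^2$ for a constant $C_0=C_0(\alpha,k,a,D)$. Since $X_1,X_2$ are increasing, $\operatorname{Cov}(X_1,X_2)\ge0$ by Harris' inequality, so $\Var X_2\le\Var X$; and $\Var X=\Theta((1-p)\Lambda)$ by Remark~\ref{rem:Var}. If $\mu\ge1/2$, then $t\ge\min\{\sqrt{\Var X},\sqrt{\Lambda}\}$ together with $\Var X=O(\Lambda)$ forces $\Var X=O(t^2)$. If $\mu<1/2$, then $\mu\ge an^2p^k$ and $k\ge3$ give $np^{k-1}\le(2a)^{-(k-1)/k}n^{(2-k)/k}\le1$ for $n\ge n_0(k,a)$, hence $\Lambda\le2\mu<1$ and $\Var X=O(1)$, while $\mu+t\ge1$ forces $t>1/2$; so again $\Var X=O(t^2)$. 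Thus $\Pr(X_2\ge\mu+t-m)\ge(C_0+1)^{-1}$, and in both cases $\Pr(X\ge\mu+t)\ge(C_0+1)^{-1}p^{|W|}$. Finally, the same reasoning (using $\Var X\ge c_1(1-p)\Lambda\ge c_1\alpha\mu\ge c_1\alpha/2$ when $\mu\ge1/2$, and $t>1/2$ when $\mu<1/2$) gives $t\ge t_0$ for some $t_0=t_0(\alpha,k,a,D)>0$, so $|W|\le D\max\{\sqrt{\lambda t},1\}\le c'\sqrt{t}$ with $c'=c'(\alpha,k,a,D)$; combining this with $\sqrt{t}\,\log(1/p)\ge\sqrt{t_0}\,\log(1/(1-\alpha))>0$ lets us absorb the constant factor $(C_0+1)^{-1}$ into the exponent, giving $\Pr(X\ge\mu+t)\ge\exp(-c\sqrt{t}\log(1/p))$ for a suitable $c=c(\alpha,k,a,D)\ge1$.

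The main obstacle is the variance control for $X_2$. Because $W$ may induce almost all edges of $\cH$ (one can only bound $e(\cH[W])\le D^3 e(\cH)/(k(k-1))$, which need not lie below $e(\cH)/2$), the hypergraph $\cH\setminus\cH[W]$ may have far fewer than $an^2$ edges, so Remark~\ref{rem:Var} does not apply to it directly; the fix is to pass through $\Var X_2\le\Var X$ via the positive correlation of $X_1$ and $X_2$ (Harris), and only then to compare $\Var X$ with $t^2$ using the hypothesis $t\ge\min\{\sqrt{\Var X},\sqrt{\Lambda}\}$ — which also requires separately treating the degenerate regime $\mu<1/2$ (where $k\ge3$ and $e(\cH)\ge an^2$ are used to force $np^{k-1}\le1$). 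A secondary point is the calibration of $\lambda$: it must exceed $2/\alpha$ so that the ``deficit'' $m(1-p^k)\ge\alpha\lambda t$ of missing edges outweighs the $2t$ of slack needed in the Cantelli step, yet it enters $\fX$ only at the scale $\min\{\lambda t,\mu+t\}$, keeping $|W|=O(\sqrt{t})$ intact.
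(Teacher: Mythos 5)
Your proof is correct and follows essentially the same approach as the paper's: plant $\Theta_\alpha(t)$ clustered edges via $\fX$, appeal to Harris' inequality to decouple the planted edges from the rest, and then use the one-sided Chebyshev inequality~\eqref{eq:PZ} together with $\Var X_2 \le \Var X = O(t^2)$ to conclude. The one place where you genuinely streamline the paper's argument is that you work directly with $m=e(\cH[W])$ rather than passing to a subhypergraph $\cG\subseteq\cH[W]$ with a prescribed number $\ceil{y}$ of edges; this sidesteps the rounding bookkeeping (the paper's $\beta+1\in[\lambda/2,\lambda]$ with $(\beta+1)t\in\ZZ$, which in turn forces the paper to first prove $\lambda t\ge 2$) and replaces the paper's case split $\mu\lessgtr\beta t$ by the more natural split $m\lessgtr\mu+t$. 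The key computation $\mu+t-m=\E X_2-(m(1-p^k)-t)\le\E X_2-t$ is exactly what the paper achieves via its bound $\E Y\le(\beta-1)t$.

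One small slip: in the degenerate case $\mu<1/2$ you invoke $k\ge 3$ to deduce $np^{k-1}\le 1$, but the theorem is stated for $k\ge 2$. For $k=2$ the inequality $\mu\ge an^2p^2$ with $\mu<1/2$ only gives $np^{k-1}=np<(2a)^{-1/2}$, which need not be $\le 1$; however $np^{k-1}=O_a(1)$ still holds, so $\Lambda=O_{a}(\mu)=O_a(1)$ and hence $\Var X=O_{k,a,D}(1)$, and your conclusion $\Var X=O(t^2)$ is unchanged. So the claim $\Lambda\le 2\mu$ should be weakened to $\Lambda=O_a(\mu)$, but nothing else in the argument is affected. (You may also want to note explicitly, as a courtesy to the reader of~\eqref{eq:PZ}, that if $t\ge\E X_2$ then $\mu+t-m\le 0$ and the probability is trivially $1$, so the restriction $t<\E X_2$ in~\eqref{eq:PZ} is harmless.)
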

We remark that the form of the somewhat strange-looking assumption $t \ge \indic{\mu \ge 1/2}\min\{\sqrt{\Var X},\sqrt{\Lambda}\}$ will be convenient later on. 
Before giving the proof of Theorem~\ref{thm:LB:constr}, let us informally discuss the structure of the argument. 
The clustering construction intuitively `marks' a set of~$2t$ edges in~$\cH$. 
Let~$Z$ denote the number of `unmarked' edges that occur in $\cH_p$, so $\E Z = (e(\cH)-2t)p^k=\mu-2tp^k$.
The punchline is that the clustering construction (which enforces the $2t$ `marked' edges) allows us to 
shift our focus from the unlikely event $X \ge \E X +t$ to the `typical' event $Z \ge \E Z -t/2$.
Indeed, it turns out that, using Harris' inequality~\cite{Harris1960} and $\mu = \E Z + 2tp^k$, for suitable $W \subseteq V(\cH)$ with $|W| = O(\sqrt{t})$ and $e(\cH[W]) \ge 2t$ we eventually arrive at 
\begin{equation*}
\Pr(X \ge \mu+t) \; \ge \; \Pr(W \subseteq V_p(\cH)) \cdot \Pr(Z \ge \mu-t) \; \ge \; p^{\Theta(\sqrt{t})} \cdot \Pr(Z \ge \E Z -t+2tp^k) .
\end{equation*}
It seems plausible that $\Var Z = O(\Var X)$ holds. 
A folklore variant of the Paley--Zygmund inequality states that, given any random variable $Y \ge 0$, for all $0 \le t < \E Y$ we have 
\begin{equation}\label{eq:PZ}
\Pr(Y \ge \E Y - t) \ge \frac{t^2}{\Var Y + t^2} .
\end{equation}
So, assuming $p \le 1/2$ (which implies $2p^k \le 1/2$ for $k \ge 2$), for $t \ge \sqrt{\Var X}$ 
we should intuitively obtain
\[
\Pr(Z \ge \E Z -t+2tp^k) \; \ge \; \Pr(Z \ge \E Z -t/2) \; \ge \; \Omega\left(\frac{t^2}{\Var Z+t^2}\right)  = \Omega(1) . 
\]
The proof below makes this reasoning rigorous, but there are a number of subtle issues (which make the details somewhat cumbersome).  
For example, the parameter~$t$ may be very small, so we can not, as usual, ignore rounding issues. 
Furthermore, to allow for $p \le 1-\alpha$ we need to plant~$\lambda t$ copies (instead of just~$2t$ copies) for carefully chosen~$\lambda=\lambda(\alpha,k)>0$. 
In addition, the $W \subseteq V_p(\cH)$ based construction does not work if $\lambda t$ is larger than the total number of edges~$e(\cH)$, so we shall only enforce $\min\{\lambda t,\mu+t\}$ copies. 
\begin{proof}[Proof of Theorem~\ref{thm:LB:constr}]
We defer the elementary proof of the fact that there is $\lambda=\lambda(\alpha,k) > 0$ satisfying 
\begin{equation}\label{eq:LB:constr:lt}
\lambda t  \ge 2.
\end{equation}
Defining $x =\min\{\lambda t,\mu+t\}$, by $\fX(\cH,D,x)$ there is $W \subseteq V(\cH)$ satisfying $|W| \le D \sqrt{\lambda t}$ and $e(\cH[W]) \ge x$. 
To later avoid rounding issues, we pick $\beta+1 \in [\lambda/2,\lambda]$ such that $(\beta+1) t$ is an integer. 
Defining $y = \min\{(\beta+1) t,\mu+t\}$, note that there is $\cG \subseteq \cH[W]$ with $e(\cG)=\ceil{y}$.  
Define $Y=e(\cG[V_{p}(\cH)])$. Clearly, 
\begin{equation}\label{eq:thm:LB:constr:1}
\Pr(Y \ge y) = \Pr(Y \ge \min\{(\beta+1) t,\mu+t\}) \ge \Pr(W \subseteq V_p(\cH)) = p^{|W|} \ge p^{D\sqrt{\lambda t}} .
\end{equation}
In the case $\mu \le \beta t$ we have $\mu+t \le y$, so that $\Pr(X \ge \mu+t) \ge \Pr(X \ge y) \ge \Pr(Y \ge y)$ and~\eqref{eq:thm:LB:constr:1} establish inequality~\eqref{eq:thm:LB:constr} for any constant~$c$ satisfying $c \ge D\sqrt{\lambda}$ (we defer the precise choice of~$c$). 

Henceforth we focus on the more interesting case $\mu > \beta t$. Define $Z=X-Y$. 
Since $Y \ge (\beta+1) t$ and $Z \ge \mu-\beta t$ are both increasing events, using $X = Y+Z$, Harris' inequality~\cite{Harris1960}, and~\eqref{eq:thm:LB:constr:1} we infer
\begin{equation}\label{eq:thm:LB:constr:2}
\begin{split}
\Pr(X \ge \mu+t) & \ge \Pr(\text{$Y \ge (\beta+1) t$ and $Z \ge \mu-\beta t$}) \\
& \ge \Pr(Y \ge (\beta+1) t) \Pr(Z \ge \mu-\beta t) \ge p^{D\sqrt{\lambda t}} \Pr(Z \ge \mu-\beta t). 
\end{split}
\end{equation}
We defer the proof of the conceptually straightforward (but slightly tedious) claim that 
\begin{align}
\label{eq:thm:LB:EZ}
\E Y & \le (\beta - 1) t, \\
\label{eq:thm:LB:VZ}
\Var Z & \le C t^2,
\end{align}
where $C=C(k,a,D,\lambda) \ge 1$. 
Using $\E Z-t = \E X-\E Y -t \ge \mu -\beta t$ and the Paley--Zygmund inequality~\eqref{eq:PZ}, 
for $d=\log_{1-\alpha}(1/(C+1))>0$ it follows (exploiting $1-\alpha \ge p$ and $1 \le \lambda t$) that 
\begin{equation}\label{eq:thm:LB:constr:3}
\Pr(Z \ge \mu-\beta t) \ge \Pr(Z \ge \E Z-t) \ge \frac{t^2}{\Var Z +t^2} \ge \frac{1}{C+1} =(1-\alpha)^{d} \ge p^{d} \ge p^{d\sqrt{\lambda t}} . 
\end{equation}
Inserting~\eqref{eq:thm:LB:constr:3} into~\eqref{eq:thm:LB:constr:2} establishes inequality~\eqref{eq:thm:LB:constr} with $c = D\sqrt{\lambda} + d\sqrt{\lambda}$.

It remains to prove the auxiliary claims~\eqref{eq:LB:constr:lt} and~\eqref{eq:thm:LB:EZ}--\eqref{eq:thm:LB:VZ}. 
Let $\lambda = 4/(1-(1-\alpha)^k)$. 
Writing $Y_{e} = \indic{e \subseteq V_{p}(\cH)}$, note that Harris' inequality yields $\E(Y_eY_f) \ge \E Y_e \E Y_f$. 
As $\E Y_e^2 = \E Y_e$, we infer 
\begin{equation}\label{eq:thm:LB:constr:VarX}
\Var X = \sum_{(e,f) \in \cH \times \cH} \bigl[\E (Y_e Y_f) - \E Y_e \E Y_f \bigr] \ge \sum_{e \in \cH}(1-\E Y_e) \E Y_e \ge (1-p^k)\mu \ge (1-(1-\alpha)^k)\mu = 4\mu/\lambda . 
\end{equation}
Observing $\Lambda \ge \mu$ and $t \ge 1-\mu$, using the assumed lower bound for~$t$ (and $\lambda \ge 4$) it follows that   
\[
\lambda t \ge \lambda\Bigl(\indic{\mu \ge 1/2} \min\bigl\{\sqrt{4\mu/\lambda},\sqrt{\mu}\bigr\} + \indic{\mu <1/2}(1-\mu)\Bigr) \ge 2, 
\] 
establishing the claimed inequality~\eqref{eq:LB:constr:lt}. 
Recall that we only need to prove~\eqref{eq:thm:LB:EZ}--\eqref{eq:thm:LB:VZ} whenever $\mu > \beta t$. 
In this case $\ceil{y}=(\beta+1)t$ holds by choice of~$\beta$, so that $(1-\alpha)^k = 1-4/\lambda$ and $\beta+1 \ge \lambda/2$ imply 
\[
\E Y = \ceil{y} p^k \le (\beta+1) (1-\alpha)^kt  = \bigl[\beta+1 - (\beta+1)4/\lambda\bigr] t \le (\beta-1)t ,
\]
establishing the claimed inequality~\eqref{eq:thm:LB:EZ}.  
To get a handle on $\Var Z$ in~\eqref{eq:thm:LB:VZ}, note that~$Z$ is a restriction of~$X$ to a subset of the edges of $\cH$. 
So, with~\eqref{eq:thm:LB:constr:VarX} and~$\E(Y_eY_f) - \E Y_e \E Y_f \ge 0$ in mind, it is not difficult to see that $\Var Z \le \Var X$ holds. 
By Remark~\ref{rem:Var} there is a constant $A=A(k,a,D) >0$ such that 
\begin{equation}\label{eq:thm:LB:constr:VarX:UB}
\Var X \le A \Lambda = A\mu(1+np^{k-1}).
\end{equation}
Recalling $\mu \ge a n^2p^k$, it is easy to see that $\mu < 1/2$ implies $p = O(n^{-2/k})$ and $\Var X \le B = B(A,k,a,D) > 0$.  
Using~\eqref{eq:thm:LB:constr:VarX:UB} and the assumed lower bound for~$t$ in case of $\mu \ge 1/2$, 
it follows (exploiting $1 \le \lambda t$) that  
\[
\Var Z \le \Var X \le \indic{\mu < 1/2}B + \indic{\mu \ge 1/2} \max\{A,1\} t^2 \le \max\{B \lambda^2,A,1\} t^2 ,
\]
completing the proof. 
\end{proof}
Using a variant of the above proof, it alternatively suffices to assume $t \ge \max\{\sqrt{p\Var X},1\}$, say. 
Furthermore, for $p=o(1)$ and $t=O(\mu)$ with $t = \omega(1)$ we can easily improve the constant $c$ by planting only $(1+o(1))t$ edges (in some cases, this approach presumably yields the `optimal' form of the exponent).

\subsection{Configurations with many disjoint edges}\label{sec:LB:disj}
The second lower bound is based on the heuristic that, for small~$p$, most edges of~$\cH_p$ should arise disjointly. 
Exploiting the implied `approximate independence' of the edges, we obtain the following Chernoff-like lower bound. 
In fact, \eqref{eq:thm:LB:disj} is of sub-Gaussian type since $\E X = (1+o(1)) \Var X$ for the~$p$ under consideration. 
\begin{theorem}\label{thm:LB:disj}
Given $k \ge 3$, $a > 0$ and $D \ge 1$, 
let $\cH=\cH_n$ be a $k$-uniform hypergraph satisfying $v(\cH) \le Dn$, $e(\cH) \ge an^2$ and $\Delta_2(\cH) \le D$. 
Set $X=e(\cH_p)$, $\mu=\E X$ and $\varphi(x)=(1+x)\log(1+x)-x$.  
There are $n_0,c,d>0$ (depending only on $k,a,D$) such that for all $n \ge n_0$, $0 < p \le n^{-2/(k+1/3)}$ and $t \ge 0$ satisfying $1 \le \mu+t \le 9\max\{\mu,n^{1/(2k)}\}$ we have
\begin{equation}\label{eq:thm:LB:disj}
\Pr(X \ge \mu+t) \ge d \exp\Bigl(-c\varphi(t/\mu)\mu\Bigl) \ge d \exp\Bigl(-ct^2/\mu \Bigl) .  
\end{equation}
\end{theorem}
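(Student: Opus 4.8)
The plan is to turn the heuristic ``for small $p$ the edges of $\cH_p$ occur almost independently'' into a Poisson/Chernoff-type lower bound: one shows that in the upper tail $X=e(\cH_p)$ essentially behaves like $\Bin\bigl(e(\cH),p^k\bigr)\approx\pois[\mu]$, for which $\Pr(\,\cdot\,\ge\mu+t)\ge d\exp(-c\varphi(t/\mu)\mu)$ is classical and holds for every $t\ge 0$. First I would unpack the hypotheses: since $k\ge 3$ and $p\le n^{-2/(k+1/3)}$ we get $np^{k-1}=n^{-\Omega(1)}=o(1)$ and $\mu=\Theta(n^2p^k)=O\bigl(n^{2/(3k+1)}\bigr)=o(\sqrt n)$, so (using $\mu+t\le 9\max\{\mu,n^{1/(2k)}\}$) also $s:=\ceil{\mu+t}=o(\sqrt n)$; moreover $\Var X=\Theta(\mu)$ by Remark~\ref{rem:Var}, $\Delta_1(\cH)=O(n)$ so at most $O(sn)\ll e(\cH)$ edges meet any fixed set of $s$ edges, and — this is exactly where the awkward exponent $2/(k+1/3)$ is used — $\mu^2=O(np)$.

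The core is a second-moment argument. Let $Y$ count the size-$s$ matchings $M\subseteq\cH$ with $V(M)\subseteq V_p(\cH)$; since $\{Y\ge 1\}\subseteq\{X\ge s\}\subseteq\{X\ge\mu+t\}$ it suffices to bound $\Pr(X\ge\mu+t)\ge(\E Y)^2/\E Y^2$ from below. A greedy count produces $(1\pm o(1))e(\cH)^s/s!$ size-$s$ matchings — the ``collision'' correction $\prod_{i<s}\bigl(1-O(in/e(\cH))\bigr)=1-O(s^2/n)$ is $1-o(1)$ since $s=o(\sqrt n)$ — so $\E Y=(1\pm o(1))\mu^s/s!$. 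Then $\E Y^2/(\E Y)^2=\E_{M,M'}\bigl[p^{-|V(M)\cap V(M')|}\bigr]$ for independent uniform size-$s$ matchings $M,M'$, and I would bound this by splitting on the number $\ell$ of edges shared by $M$ and $M'$: the $\ell=0$ part contributes $\exp\bigl(O(s^2/(np))\bigr)=\exp(O(1))$ (spurious vertex overlaps, estimated via $\Delta_2(\cH)=O(1)$, $e(\cH)=\Theta(n^2)$ and $s^2=O(np)$), and sharing $\ell\ge 1$ edges contributes at most $\sum_{\ell\ge 1}(s^2/\mu)^\ell/\ell!$, i.e.\ $\exp(O(s^2/\mu))-1$. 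Hence $\Pr(X\ge\mu+t)\ge\Omega(1)\exp\bigl(-O(s^2/\mu)\bigr)$; since $s^2/\mu=(\mu+t)^2/\mu$ and $\varphi(t/\mu)\mu$ are both of order $\mu$ whenever $t=\Theta(\mu)$, this already gives \eqref{eq:thm:LB:disj} in that range.

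For the remaining ranges I would argue separately. When $t=O(\sqrt{\Var X})=O(\sqrt\mu)$ the target is only $\Pr(X\ge\mu+t)=\Omega(1)$, which follows from a Paley--Zygmund-type estimate applied to $(X-\mu)^2$ (using $\Var X=\Theta(\mu)$, a routine fourth-moment bound and a skewness control). When $t\gg\mu$ is ``deep'' — possible only when $\mu$, hence $p$, is small, so that $\log(1/p)$ dominates $\log((\mu+t)/\mu)$ — the crude bound obtained by forcing $s=o(\sqrt n)$ fixed vertex-disjoint edges of $\cH$ into $V_p(\cH)$, namely $\Pr(X\ge s)\ge p^{ks}\ge\exp(-ks\log(1/p))$, already beats $d\exp(-c\varphi(t/\mu)\mu)$ once $c=c(k,a,D)$ is large enough.

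The genuinely hard part is the intermediate windows, where the second-moment bound is structurally too weak: for $\sqrt\mu\lesssim t\lesssim\mu$ one has $s^2/\mu$ overshooting $\varphi(t/\mu)\mu=\Theta(t^2/\mu)$, and for a transitional deep range ($t\gg\mu$ but $t/\mu$ only mildly large, with $\mu$ polynomially small) it overshoots $\varphi(t/\mu)\mu=\Theta(t\log(t/\mu))$. There one has to genuinely extract the Poissonian behaviour of $X$ — via a local Poisson-approximation / exponential-tilting estimate — and then make every error term harmless \emph{uniformly} over all admissible $(p,t)$: the $\exp(O(s^2/(np)))$ factor in the second moment, the skewness in the Paley--Zygmund step, and the gap in the transitional deep regime. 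This is exactly what the hypotheses $p\le n^{-2/(k+1/3)}$ and $\mu+t\le 9\max\{\mu,n^{1/(2k)}\}$ are engineered to arrange, and patching the resulting cases together seamlessly is, I expect, the most delicate point of the proof.
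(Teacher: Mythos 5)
Your approach is structurally different from the paper's, and it has a genuine gap in exactly the ranges you yourself flag as ``the genuinely hard part.''

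The paper does not use a second-moment / Paley--Zygmund argument on a single threshold event. Instead, its key technical step (Lemma~\ref{lem:LB:disj}) is a \emph{local} lower bound: for every integer $m$ in the relevant window, $\Pr(X=m)\ge e^{-b}\binom{e(\cH)}{m}p^{km}(1-p^k)^{e(\cH)-m}$. This is proved by restricting to edge-collections $\cI\in\fS_m$ whose vertex-unions induce no extra edges (so that $\{\cH_p=\cI\}$ is possible), greedily counting such $\cI$ to get $|\fS_m|\ge e^{-b/4}\binom{e(\cH)}{m}$, and using Harris' inequality to bound $\Pr(\cI=\cH_p\mid\cI\subseteq\cH_p)\ge e^{-b/2}(1-p^k)^{e(\cH)-m}$. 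Summing these pointwise bounds over $m$ from roughly $\mu+t$ to $\mu+2t$ and applying Stirling then gives the Chernoff-type lower bound $d\exp(-c\varphi(t/\mu)\mu)$ uniformly over the allowed $t$ — with no case split by the size of $t$, and no transition regime to patch.

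Your route — bound $\Pr(X\ge s)\ge(\E Y)^2/\E Y^2$ with $Y$ the count of size-$s$ matchings in $\cH_p$ — is correct only in the regime $t=\Theta(\mu)$, where $s^2/\mu$ and $\varphi(t/\mu)\mu$ are comparable, as you note. For $\sqrt\mu\lesssim t\ll\mu$ the exponent $s^2/\mu\asymp\mu$ overshoots the target $\Theta(t^2/\mu)$ by an unbounded factor, and for $t\gg\mu$ with $\log(t/\mu)\ll\log(1/p)$ neither the second-moment bound nor the crude $p^{ks}$ plant is good enough. You correctly name the missing ingredient — a local Poisson/binomial approximation for $\Pr(X=m)$ — but you do not supply it; that approximation is precisely the content of the paper's Lemma~\ref{lem:LB:disj}, and without it the argument does not close. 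In short: the fourth-moment/skewness control in the small-$t$ range and, crucially, the local limit estimate in the intermediate range are stated as needed but not established, so the proposal as written does not constitute a proof.
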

We have not tried to optimize $p \le n^{-2/(k+1/3)}$, but conjecture that this condition can be relaxed to $p=O(n^{-1/(k-1)})$. 
In fact, it would be interesting to have a general method which yields such Poisson-type lower bounds for the upper tail when $\Var X  = (1+o(1)) \E X$ holds (for the lower tail this was very recently settled by Janson and Warnke~\cite{JW}).  
In the proof of Theorem~\ref{thm:LB:disj} we shall use the idea that, for small~$p$, most edges $f \in \cH$ should appear disjointly (and thus nearly  independently) in $\cH_p$. 
The next lemma makes this more precise: it relates $\Pr(X=m)$ with $\Pr(\Bin(e(\cH),p^k)=m)$ over a convenient (but ad-hoc) range of~$m$. %the constant $99$ is ad-hoc). 
\begin{lemma}\label{lem:LB:disj}
Given $k \ge 3$, $a > 0$ and $D \ge 1$, 
let $\cH=\cH_n$ be a $k$-uniform hypergraph satisfying $v(\cH) \le Dn$, $e(\cH) \ge an^2$ and $\Delta_2(\cH) \le D$. 
Set $X=e(\cH_p)$ and $\mu=\E X$. 
There are $n_0,b>0$ (depending only on $k,a,D$) such that for all $n \ge n_0$, $0 < p \le n^{-2/(k+1/3)}$ and 
integers 
$0 \le m \le 99\max\{\mu,n^{1/(2k)}\}$ we have
\begin{equation}\label{eq:lem:LB:disj}
\Pr(X=m) \ge e^{-b} \binom{e(\cH)}{m} p^{km}(1-p^k)^{e(\cH)-m} . 
\end{equation}
\end{lemma}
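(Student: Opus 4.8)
The plan is to compare $X = e(\cH_p)$ with a binomial $\Bin(e(\cH),p^k)$ by a direct counting argument: write $\Pr(X = m)$ as a sum over $m$-subsets $\cG \subseteq \cH$ of the probability that exactly the edges of $\cG$ are induced, i.e.
\[
\Pr(X=m) = \sum_{\cG \subseteq \cH, \, e(\cG)=m} \Pr\bigl(V(\cG) \subseteq V_p(\cH), \ e(\cH[V_p(\cH)]) = m\bigr).
\]
For each such $\cG$, drop the upper restriction to get a lower bound, and then further restrict to the favourable event that only the vertices in $V(\cG)$ appear among vertices touched by $\cH$: more precisely, I would lower-bound $\Pr(V(\cG) \subseteq V_p(\cH), X=m)$ by the probability that $V(\cG) \subseteq V_p(\cH)$ and \emph{no} edge $f \notin \cG$ has $f \subseteq V_p(\cH)$. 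The binomial target $\binom{e(\cH)}{m} p^{km}(1-p^k)^{e(\cH)-m}$ arises if one pretends the events $\{f \subseteq V_p(\cH)\}$ for the $e(\cH)-m$ edges $f \notin \cG$ are independent of each other and of $\{V(\cG) \subseteq V_p(\cH)\}$; the content of the lemma is that, for $p$ as small as assumed, the true probability loses only a constant factor $e^{-b}$ relative to this idealization.

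The key steps, in order. \textbf{(1)} Fix $\cG$ with $e(\cG)=m$ and condition on $V(\cG) \subseteq V_p(\cH)$; note $|V(\cG)| \le km$. The remaining randomness is an independent inclusion of each vertex outside $V(\cG)$. \textbf{(2)} By Harris' inequality (the events $\{f \not\subseteq V_p(\cH)\}$ for $f \notin \cG$ are decreasing), bound
\[
\Pr\Bigl(\bigcap_{f \notin \cG}\{f \not\subseteq V_p(\cH)\} \,\Big|\, V(\cG) \subseteq V_p(\cH)\Bigr) \ge \prod_{f \notin \cG} \Pr\bigl(f \not\subseteq V_p(\cH) \mid V(\cG) \subseteq V_p(\cH)\bigr).
\]
\textbf{(3)} For each $f \notin \cG$, the conditional non-inclusion probability is $1 - p^{|f \setminus V(\cG)|} \ge 1 - p^{k - |f \cap V(\cG)|}$; split the product into edges $f$ disjoint from $V(\cG)$ (contributing exactly $(1-p^k)$ each) and the relatively few edges $f$ meeting $V(\cG)$. \textbf{(4)} Bound the number of edges meeting $V(\cG)$ by $|V(\cG)| \cdot \Delta_1(\cH) \le km \cdot D^2 n$ (using $v(\cH) \le Dn$, $\Delta_2(\cH) \le D$), and bound each corresponding factor crudely by $(1-p^k)^{-1} \le 1 + O(p^k)$, absorbing the total into $\exp\bigl(O(m n p^k)\bigr)$. \textbf{(5)} Combine: $\Pr(X = m) \ge \binom{e(\cH)}{m} p^{km} (1-p^k)^{e(\cH)} \cdot \exp\bigl(-O(mnp^k)\bigr)$, then rewrite $(1-p^k)^{e(\cH)} = (1-p^k)^{e(\cH)-m}(1-p^k)^m$ and absorb $(1-p^k)^{-m} = \exp(O(mp^k))$. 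It remains to check $m n p^k = O(1)$: since $e(\cH) = \Theta(n^2)$ and $p \le n^{-2/(k+1/3)}$, one has $\mu = \Theta(n^2 p^k) = O(n^{2 - 2k/(k+1/3)}) = O(n^{-2/(3k+1)}) = o(1)$, so $\max\{\mu, n^{1/(2k)}\} = n^{1/(2k)}$ and $m \le 99 n^{1/(2k)}$, giving $m n p^k \le 99 n^{1 + 1/(2k)} p^k \le 99 n^{1 + 1/(2k) - 2k/(k+1/3)}$, and a short exponent computation shows this is $O(1)$ (indeed $\to 0$) for $k \ge 3$ and large $n$. This fixes $b$.

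The main obstacle is \textbf{Step (4)}: controlling the edges $f \notin \cG$ that \emph{share} vertices with $V(\cG)$, since for these the conditioning on $V(\cG) \subseteq V_p(\cH)$ makes $f$ \emph{more} likely to be induced, so the naive independent product overcounts the non-inclusion probability and one must show the correction is benign. The fix is exactly the pairing of the codegree bound $\Delta_2(\cH) \le D$ (which caps the number of such $f$ by $O(mn)$) against the smallness of $p$ (which makes $np^k = o(1)$), so that the accumulated multiplicative error $\prod (1-p^{\ge 1})^{-1}$ over these $O(mn)$ edges stays bounded. A secondary technical nuisance is that one should verify the range $m \le 99\max\{\mu,n^{1/(2k)}\}$ is exactly what makes $mnp^k$ bounded — this is the reason the ad hoc constant $99$ (rather than the $9$ appearing in Theorem~\ref{thm:LB:disj}) appears in the statement, leaving slack for the later deduction of~\eqref{eq:thm:LB:disj} by summing \eqref{eq:lem:LB:disj} over an appropriate window of values of $m$.
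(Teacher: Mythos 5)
Your overall plan --- fix an $m$-edge subset $\cG \subseteq \cH$, force $V(\cG) \subseteq V_p(\cH)$, and use Harris' inequality to compare the probability of no other edge appearing to the binomial target --- is the same strategy the paper uses. But the proposal has two genuine gaps.

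First and most seriously, Step~(5) sums the lower bound over all $\binom{e(\cH)}{m}$ subsets $\cG$ as though each contributes the same amount. This is false: whenever some $g \in \cH \setminus \cG$ satisfies $g \subseteq V(\cG)$, conditioning on $V(\cG) \subseteq V_p(\cH)$ forces $g \in \cH_p$, so the factor $\Pr(g \not\subseteq V_p(\cH) \mid V(\cG) \subseteq V_p(\cH))$ in Step~(2) is zero and the whole term vanishes. You only get a nonzero contribution from ``good'' subsets $\cG$ that induce no such extra edges, and you must prove there are at least $e^{-O(1)}\binom{e(\cH)}{m}$ of them. That is roughly half the paper's proof: it introduces the family $\fS_m$ of good subsets and lower-bounds $|\fS_m|$ via a greedy construction of vertex-disjoint edge collections that avoids ever creating an induced extra edge, under the additional (and nontrivially verified) constraint $nm^3/e(\cH)=O(1)$. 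Nothing in your write-up supplies this count.

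Second, the exponent in Step~(4) is wrong. For an edge $f$ with $|f \cap V(\cG)| = j \ge 1$, the conditional non-inclusion probability is $1-p^{k-j}$, so the per-edge deviation from $1-p^k$ is $\Theta(p^{k-j})$, which is as large as $\Theta(p)$ for $j=k-1$, not $\Theta(p^k)$ as you assume. A uniform bound of $1-p$ over the $O(mn)$ edges meeting $V(\cG)$ gives a loss $\exp\bigl(-O(mnp)\bigr)$, and $mnp$ can diverge in the allowed range (for $k=3$, $p\approx n^{-3/5}$, $m\approx n^{1/6}$ gives $mnp\approx n^{17/30}$). The correct argument --- which the paper carries out and you gesture at but do not implement --- splits the meeting edges by $j$: the $j=1$ class has $O(mn)$ edges (degree bound) with per-edge loss $p^{k-1}$, so one needs $mnp^{k-1}=O(1)$; the $j\ge 2$ class has only $O(m^2)$ edges (this is where $\Delta_2(\cH)\le D$ actually enters, capping edges through each of the $O(m^2)$ pairs inside $V(\cG)$) with per-edge loss up to $p$, so one needs $m^2p=O(1)$. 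The binding constraint is $mnp^{k-1}=O(1)$, not $mnp^k=O(1)$: for $k=3$ its exponent is only $-1/30$, so this is precisely what pins down the hypothesis $p\le n^{-2/(k+1/3)}$, whereas your $mnp^k$ calculation gives the much more comfortable $-19/30$ and would misleadingly suggest a larger range of $p$ works.
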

With Lemma~\ref{lem:LB:disj} in hand, the proof of Theorem~\ref{thm:LB:disj} essentially reduces to folklore lower bounds for the binomial distribution (based on Stirling's formula); we include the details in Appendix~\ref{apx} for completeness (some minor care is needed when $t$ is small).  
A similar analysis can be used to tighten 
related results in the theory of random graphs due to DeMarco and Kahn~\cite{KkTailDK} and {\v{S}}ileikis~\cite{Matas2012}.

Let us informally discuss the strategy used in the proof of Lemma~\ref{lem:LB:disj}. 
For~\eqref{eq:lem:LB:disj} the basic plan is to consider the event that $\cH_p$ consists of \emph{exactly}~$m$ vertex disjoint edges. 
It turns out that, for small~$m$, there are roughly $\binom{e(\cH)}{m}$ ways to select such edge collections, and with probability~$p^{km}$ their $m$ disjoint edges are all present.  
Of course, we also need to take into account that all of the remaining $e(\cH)-m$ edges are \emph{not} present (to avoid overcounting). If these were independent events, then this would yield another factor of $(1-p^k)^{e(\cH)-m}$, and for small~$p$ we expect that this is usually close to the truth. 
The proof below follows the discussed outline, dropping the (de facto redundant) disjointness condition. 
However, we need to deal with one subtle technicality that we ignored so far: given a collection of edges $\{f_1, \ldots, f_{m}\} \subseteq \cH$, it can happen that the union of their vertex sets $\bigcup_{i \in [m]}f_i$ induces additional `extra' edges from~$\cH$ (even if all the~$f_i$ are vertex disjoint). 
In particular, for our construction this means that the second part is \emph{impossible}:  in this `bad' case at least one of the remaining~$e(\cH)-m$ edges must occur. 
Luckily, such bad edge collections are rare for small~$m$, so we can simply ignore them in our proof (see the definition of $\fS_m$ below). 
\begin{proof}[Proof of Lemma~\ref{lem:LB:disj}]
Define 
\begin{equation}\label{eq:lem:LB:disj:H}
\fS_{m} =\Bigl\{\cI \subseteq \cH: \text{ $e(\cI)=m$, \ and there are no $g \in \cH \setminus \cI$ with $g \subseteq \bigcup_{f \in \cI}f$}\Bigr\} .
\end{equation}
Recall that $f \in \cH_p$ if and only if $f \subseteq V_p(\cH)$. 
As the union of all edges in $\cI \in \fS_{m}$ contains at most~$km$ vertices, we have $\Pr(\cI \subseteq \cH_p) \ge p^{km}$ (for disjoint edges this would hold with equality.) 
So, since the events $\{\cI = \cH_p\}_{\cI \in \fS_{m}}$ are mutually exclusive, using $\Pr(\cI = \cH_p)=\Pr(\cI \subseteq \cH_p) \Pr(\cI = \cH_p \mid \cI \subseteq \cH_p)$ it follows that  
\begin{equation}\label{eq:lem:LB:disj:m}
\Pr(X=m) \ge \sum_{\cI \in \fS_{m}} \Pr(\cI \subseteq \cH_p) \Pr(\cI = \cH_p \mid \cI \subseteq \cH_p)  \ge |\fS_{m}| p^{km} \min_{\cI \in \fS_{m}}\Pr(\cI = \cH_p \mid \cI \subseteq \cH_p).
\end{equation}
It remains to estimate $|\fS_{m}|$ and $\Pr(\cI = \cH_p \mid \cI \subseteq \cH_p)$ from below. 
We defer the routine proof of the auxiliary claim that there is $\lambda=\lambda(k,a,D) >0$ such that for $n \ge n_0(k,a,D)$ we have 
\begin{equation}\label{eq:lem:LB:disj:ampt}
k^3D^3nm^2/e(\cH) \le 1/2 \quad \text{and} \quad  \max\bigl\{nm^3/e(\cH), \: m^2p, \: nmp^{k-1}\bigr\} \le \lambda .
\end{equation}

We bound $|\fS_{m}|$ from below by constructing certain edge-subsets $\cI=\{f_1, \ldots, f_m\} \in \fS_{m}$, counting the number of choices in each step. 
For $0 \le j < m$ we iteratively select $f_{j+1} \in \cH \setminus (\cB_{1,j+1} \cup \cB_{2,j+1})$, where 
\begin{equation*}
\begin{split}
\cB_{x,j+1}&=\bigl\{f \in \cH: \text{there is $g \in \cH$ with $|g \cap  \bigcup_{i \in [j]} f_{i}| \ge x$ and $|g \cap f| \ge 3-x$}\bigr\} .
\end{split}
\end{equation*}
Since $\{f_1, \ldots, f_j\} \subseteq \cB_{1,j+1}$ holds (consider $g=f=f_i$), all edges $f_i$ are distinct (in fact, vertex disjoint). 
Next, aiming at a contradiction, suppose there is an edge $g \in \cH \setminus \cI$ and an index $\ell \in [m]$ such that $g \subseteq \bigcup_{i \in [\ell]}f_i$ and $g \not\subseteq \bigcup_{i \in [\ell-1]}f_i$. 
If $|g \cap \bigcup_{i \in [\ell-1]} f_{i}| = 1$, then $|g \cap f_\ell| = k-1 \ge 2$ implies $f_{\ell} \in \cB_{1,\ell}$. 
If $|g \cap \bigcup_{i \in [\ell-1]} f_{i}| \ge 2$, then $|g \cap f_\ell| \ge 1$ implies $f_{\ell} \in \cB_{2,\ell}$. 
Both conclusions contradict $f_{\ell} \not\in (\cB_{1,\ell} \cup \cB_{2,\ell})$, 
showing that all constructed sets $\cI=\{f_1, \ldots, f_m\}$ indeed satisfy $\cI \in \fS_{m}$. 
Turning to the number of choices in the above greedy construction, note that $|\cB_{1,j+1}| \le kj \cdot \Delta_1(\cH) \cdot \binom{k}{2} \Delta_2(\cH)$ and $|\cB_{2,j+1}| \le \binom{kj}{2} \Delta_2(\cH) \cdot k \Delta_1(\cH)$. 
Since $\Delta_2(\cH) \le D$ and $\Delta_1(\cH) \le v(\cH) \Delta_2(\cH) \le D^2 n$, we infer that for each edge $f_{j+1}$ there are at least 
\[
e(\cH)-\bigl(|\cB_{1,j+1}|+|\cB_{2,j+1}|\bigr) \ge e(\cH)-\bigl(k^3D^3n j/2 + k^3D^2n j^2/2\bigr) \ge e(\cH)-k^3D^3n j^2
\]
choices. Recall that $1-x \ge e^{-2x}$ if $x \in [0,1/2]$. 
Since each edge-subset~$\cI$ can be generated in up to $m!$ different ways by our greedy construction, using $z^y/y! \ge \binom{z}{y}$ and \eqref{eq:lem:LB:disj:ampt} it follows for~$b=8k^3D^3 \lambda$ that, say,  
\begin{equation}\label{eq:lem:LB:disj:LB}
\begin{split}
|\fS_{m}| &\ge \frac{\prod_{0 \le j < m} \bigl(e(\cH)-k^3D^3nj^2\bigr)}{m!} \ge \frac{e(\cH)^m}{m!} \left(1-\frac{k^3D^3nm^2}{e(\cH)}\right)^m \\
& \ge \binom{e(\cH)}{m} \exp\bigl(-2k^3D^3nm^3/e(\cH)\bigr) \ge \binom{e(\cH)}{m} e^{-b/4} .
\end{split}
\end{equation}

Next, we estimate $\Pr(\cI = \cH_p \mid \cI \subseteq \cH_p)$ for all $\cI \in \fS_{m}$. 
Let $\cF_2$ contain all $g \in \cH \setminus \cI$ with $2 \le |g \cap \bigcup_{f \in \cI} f| < k$.
Similarly, let $\cF_1$ contain all $g \in \cH \setminus \cI$ with $|g \cap \bigcup_{f \in \cI} f| =1$. 
Set $\cF_0 = \cH \setminus (\cI \cup \cF_1 \cup \cF_2)$, and note that by definition of $\fS_{m}$, see \eqref{eq:lem:LB:disj:H}, all $g \in \cF_0$ satisfy $|g \cap \bigcup_{f \in \cI} f| =0$.  
Since $f \in \cH_p$ if and only if $f \subseteq V_p(\cH)$, using Harris' inequality~\cite{Harris1960} we deduce, say,  
\begin{equation*}\label{eq:lem:LB:disj:PP1}
\begin{split}
\Pr(\cI = \cH_p \mid \cI \subseteq \cH_p) & = 
\Pr\Bigl(\bigcap_{g \in \cF_0 \cup \cF_1 \cup \cF_2} \{g \not\subseteq V_p(\cH)\} \mid \bigcup_{f \in \cI} f \subseteq V_p(\cH)\Bigr)\\
&\ge (1-p^k)^{|\cF_0|} (1-p^{k-1})^{|\cF_1|}(1-p)^{|\cF_2|} . 
\end{split}
\end{equation*}
Note that $|\cF_1| \le k m \cdot \Delta_1(\cH)$ and $|\cF_2| \le \binom{km}{2} \cdot \Delta_2(\cH)$. 
Since $\Delta_1(\cH) \le D^2n$ and $\Delta_2(\cH) \le D$, 
using \eqref{eq:lem:LB:disj:ampt} we infer, by choice of~$b=8k^3D^3 \lambda$, that 
\[
|\cF_1|p^{k-1}+|\cF_2|p \le kD^2nmp^{k-1}+k^2Dm^2p \le (kD^2+k^2D)\lambda \le b/4 .
\]
Recalling $1-x \ge e^{-2x}$ if $x \in [0,1/2]$, using $p \le 1/2$ and $|\cF_0| \le e(\cH)-m$ we thus obtain 
\begin{equation*}\label{eq:lem:LB:disj:PP2}
\begin{split}
\Pr(\cI = \cH_p \mid \cI \subseteq \cH_p) &  \ge (1-p^k)^{|\cF_0|} e^{-2(|\cF_1|p^{k-1}+|\cF_2|p)} \ge (1-p^k)^{e(\cH)-m} e^{-b/2},
\end{split}
\end{equation*}
which together with \eqref{eq:lem:LB:disj:m} and \eqref{eq:lem:LB:disj:LB} establishes inequality~\eqref{eq:lem:LB:disj}, with room to spare.  

In the remainder we sketch the verification of~\eqref{eq:lem:LB:disj:ampt}, using the convention that all implicit constants may depend on~$k,a,D$. 
Let $\alpha =2/(k+1/3)$ and $\beta=2-k\alpha=2/(3k+1)=\alpha/3$, so that $\mu = O(n^2p^k)$, $p \le n^{-\alpha}$ and $1/(2k) \le \beta$ imply $m=O(n^{\beta})$. 
Using $e(\cH) = \Omega(n^2)$, $p \le n^{-\alpha}$ and $\beta < 1/3$ it now is routine to check that~\eqref{eq:lem:LB:disj:ampt} holds for suitable $\lambda>0$ 
(as $2\beta-1 < 0$ and $\max\{3\beta-1, \: 2\beta-\alpha, \: 1+\beta-(k-1)\alpha\} \le 0$ for $k \ge 3$). 
\end{proof}

\subsection{Configurations with too many vertices}\label{sec:LB:vx}
Our third lower bound is based on the following heuristic: if $V_p(\cH)$ contains `too many vertices' (more than expected), then it seems likely that the induced subgraph $\cH_p=\cH[V_p(\cH)]$ also contains `too many edges' (more than the average number). 
For moderately large~$p$, this approach eventually yields the following lower bound of sub-Gaussian type  
(by Remark~\ref{rem:Var} we have $\Lambda = \Theta(\Var X)$, since $p$ is bounded away from one). 
\begin{theorem}\label{thm:LB:vx}
Given $k \ge 2$, $a > 0$ and $D \ge 1$, 
let $\cH=\cH_n$ be a $k$-uniform hypergraph satisfying $v(\cH) \le Dn$, $e(\cH) \ge an^2$ and $\Delta_2(\cH) \le D$. 
Set $X=e(\cH_p)$, $\mu=\E X$, $\Lambda = \mu (1+np^{k-1})$ and $\varphi(x)=(1+x)\log(1+x)-x$.  
Given $\alpha \in (0,1)$, there are $n_0>0$ (depending only on $k,a,D$) and $\beta,c>0$ (depending only on $\alpha,k,a,D$) such that for all $n \ge n_0$, $\alpha n^{-1/(k-1)}\le p \le 1-\alpha$ and $\min\{\sqrt{\Lambda},\sqrt{\Var X}\} \le t \le \beta \mu$ we have
\begin{equation}\label{eq:thm:LB:vx}
\Pr(X \ge \mu+t) \ge \exp\Bigl(-c \varphi(t/\mu)\mu^2/\Lambda\Bigl) \ge \exp\Bigl(-ct^2/\Lambda\Bigl) .  
\end{equation}
\end{theorem}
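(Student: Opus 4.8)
The plan is to lower-bound $\Pr(X\ge\mu+t)$ by the probability of the ``too many exposed vertices'' event. Write $N=v(\cH)$, and note $an^2\le e(\cH)\le D^3n^2$, $\sqrt{a/D}\,n\le N\le Dn$, $\Delta_1(\cH)\le D^2n$, $\Delta_2(\cH)=O(1)$ and $np^{k-1}\ge\alpha^{k-1}$ (from $p\ge\alpha n^{-1/(k-1)}$), so $\mu=\Theta(n^2p^k)$ and $\Lambda=\Theta(\mu\,np^{k-1})=\Theta(n^3p^{2k-1})$ with constants depending only on $k,a,D$. For $k\le\ell\le N$ let $X_\ell=e(\cH[S])$ with $S$ a uniform $\ell$-subset of $V(\cH)$; then $\E[X\mid|V_p(\cH)|=\ell]=\E X_\ell=e(\cH)\binom{\ell}{k}/\binom{N}{k}$ is non-decreasing in $\ell$, and a nested coupling ($S$ as the first $\ell$ elements of a uniform permutation of $V(\cH)$) shows $\Pr(X_\ell\ge\mu+t)$ is non-decreasing in $\ell$ too. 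I would fix a threshold $\ell^\ast\gtrsim pN$, bound $\Pr(|V_p(\cH)|\ge\ell^\ast)$ from below by a binomial tail estimate, bound $\Pr(X_{\ell^\ast}\ge\mu+t)$ from below by a constant, and multiply.

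The conditional step rests on two estimates, uniform in $k\le\ell\le(1-\alpha/2)N$. For the mean, $\binom{\ell}{k}/\binom{N}{k}=(\ell/N)^k(1+O(1/n))$, so $\E X_\ell-\E X_{\ell-1}=(1+O(1/n))k\mu/(pN)$ and $\E X_{\lceil pN\rceil}=\mu+O(\mu/(pN))$ with $\mu/(pN)=O(\sqrt\Lambda)$ (a short calculation using $p\le1$). For the variance, expand $\Var X_\ell=\sum_{(e,f)\in\cH^2}[\Pr(e\cup f\subseteq S)-\Pr(e\subseteq S)\Pr(f\subseteq S)]$; the key point is that $i\mapsto(\ell-i)/(N-i)$ is decreasing, so disjoint pairs satisfy $\Pr(e\cup f\subseteq S)=\prod_{0\le i<2k}\frac{\ell-i}{N-i}\le\big(\prod_{0\le i<k}\frac{\ell-i}{N-i}\big)^2$ and contribute at most $0$; the pairs with $e=f$ or $1\le|e\cap f|\le k-1$ are bounded by $\Pr(e\cup f\subseteq S)\le(\ell/N)^{|e\cup f|}=\Theta(p^{|e\cup f|})$ and counted using $\Delta_1(\cH)=O(n)$, $\Delta_2(\cH)=O(1)$ and $(1/p)^{j-1}=O(n)$ for $j\le k$, each of the $O(k)$ families contributing $O(\Lambda)$; hence $\Var X_\ell\le C_4\Lambda$ for a constant $C_4=C_4(\alpha,k,a,D)$.

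Having fixed $C_4$, I take $\ell^\ast$ minimal with $\E X_{\ell^\ast}\ge\mu+t+C_5\sqrt\Lambda$ for a suitably large constant $C_5=C_5(\alpha,k,a,D)$, chosen so that $\E X_{\ell^\ast}\ge\mu+t+\sqrt{\Var X_{\ell^\ast}}$ and $\ell^\ast>\lceil pN\rceil$; for $\beta$ small enough (in terms of $\alpha,k,a,D$) one also gets $\mu+t+C_5\sqrt\Lambda<e(\cH)$ and $\ell^\ast\le(1-\alpha/2)N$, using $t\le\beta\mu$ and $\sqrt\Lambda\le t/c_0$, where $c_0=c_0(\alpha,k,a,D)>0$ comes from $t\ge\min\{\sqrt\Lambda,\sqrt{\Var X}\}\ge c_0\sqrt\Lambda$ via Remark~\ref{rem:Var}. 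The step-size estimate then gives $\ell^\ast-pN=\Theta(tpN/\mu)$, whence (using $pN/\mu=\Theta(1/(np^{k-1}))$ and $\Lambda=\Theta(\mu\,np^{k-1})$) both $(\ell^\ast-pN)^2/(Np(1-p))=\Theta(t^2/\Lambda)$ and $c_3\sqrt{Np(1-p)}\le\ell^\ast-pN\le\eta Np(1-p)$, with $c_3>0$ fixed and $\eta$ as small as desired by shrinking $\beta$. For the conditional probability, monotonicity in $\ell$ reduces matters to $\ell=\ell^\ast$, and the Paley--Zygmund bound~\eqref{eq:PZ} for $X_{\ell^\ast}\ge0$ with deviation $\sqrt{\Var X_{\ell^\ast}}$ (which is $<\E X_{\ell^\ast}$ as $C_4\Lambda<\mu^2$ for $\beta$ small) gives $\Pr(X_{\ell^\ast}\ge\mu+t)\ge\Pr(X_{\ell^\ast}\ge\E X_{\ell^\ast}-\sqrt{\Var X_{\ell^\ast}})\ge\tfrac12$.

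For the binomial tail, summing $\Pr(\Bin(N,p)=i)$ over a window of $\Theta(\sqrt{Np(1-p)})$ values $i\ge\ell^\ast$ and using Stirling together with $N\,D(i/N\,\|\,p)\le 4(i-pN)^2/(Np(1-p))=O(t^2/\Lambda)$ (valid since $i/N$ stays within a factor $1+\eta$ of $p$ and $i-pN=O(\ell^\ast-pN)$), the $1/\sqrt{Np(1-p)}$ per term cancels the window length, so $\Pr(|V_p(\cH)|\ge\ell^\ast)\ge c\exp(-C_2 t^2/\Lambda)$. Thus $\Pr(X\ge\mu+t)\ge\tfrac12 c\exp(-C_2t^2/\Lambda)\ge\exp(-C't^2/\Lambda)$, absorbing the constant since $t^2/\Lambda\ge c_0^2$, and $\varphi(t/\mu)=\Theta((t/\mu)^2)$ for $t/\mu\le\beta$ (by~\eqref{eq:varphi:x1}) turns this into $\exp(-c\,\varphi(t/\mu)\mu^2/\Lambda)$; the second inequality in~\eqref{eq:thm:LB:vx} is just $\varphi(x)\le x^2$ from~\eqref{eq:varphi:x3}. \textbf{The main obstacle} is the conditional step --- securing the conditional mean gap $\Theta(\sqrt\Lambda)$ above $\mu+t$ simultaneously with the variance bound $\Var X_\ell=O(\Lambda)$ for this hypergeometric-type model, and threading the constants (fix $C_4$ from the variance computation, then $C_5$, then $\beta$ small relative to both, then $n_0$); the binomial tail and the error-term bookkeeping are routine.
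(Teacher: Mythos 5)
Your proposal is correct and follows the same overall strategy as the paper: condition on the number of exposed vertices $|V_p(\cH)|$ being above a threshold, bound the binomial tail from below, and show that the conditional probability of overshooting $\mu+t$ is $\Omega(1)$ via Paley--Zygmund applied to the induced hypergeometric model. The two write-ups differ in bookkeeping: the paper fixes the threshold directly as $m=(1+t/\mu)Np$ and defers the binomial tail estimate to a folklore Stirling argument, whereas you parametrize the threshold $\ell^\ast$ by a mean condition $\E X_{\ell^\ast}\ge\mu+t+C_5\sqrt{\Lambda}$ and spell out the tail estimate; your variance bound is also slightly sharper (it records the negative contribution of disjoint pairs in the hypergeometric model, versus the paper's cruder factor $(1+\eps)^{2k}$). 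One small inaccuracy in your write-up: the claim ``$\E X_\ell-\E X_{\ell-1}=(1+O(1/n))k\mu/(pN)$ uniformly in $k\le\ell\le(1-\alpha/2)N$'' is not literally true --- the step size grows like $(\ell/N)^{k-1}$ --- but this is harmless, since you only invoke it for $\ell$ in the narrow window $[pN,(1+O(\beta))pN]$, where it does hold; the same caveat applies to the claimed uniformity of the variance bound $\Var X_\ell=O(\Lambda)$.
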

The key observation is that $\mu^2/\Lambda = \Theta(np)$ for the relevant range of $p$. 
With this in mind, the proof of Theorem~\ref{thm:LB:vx} is based on the following two ideas: (i)~since $V_p(\cH) \sim \Bin(v(\cH),p)$ and $v(\cH)=\Theta(n)$, with probability at least $\exp(-\Theta(\eps^2 np))= \exp(-\Theta((\eps\mu)^2/\Lambda))$ we have $|V_p(\cH)| \ge (1+\eps)\E |V_p(\cH)|$, and (ii)~conditioning on $|V_p(\cH)| \ge (1+\eps)\E |V_p(\cH)|$ intuitively increases the expected number $e(\cH_p)=e(\cH[V_p(\cH)])$ of induced edges, effectively turning the unlikely event $X \ge \mu+t$ into a `typical' one; see also~\eqref{eq:heur:PZ} below. 
For the number of copies of $H$ in the binomial random graph $G_{n,p}$ an analogous reasoning (based on a deviation of the number of edges) applies for $p = \Omega(n^{-1/m_2(H)})$, where $m_2(H)$ is the so-called $2$-density of $H$; for the lower tail this idea was used by Janson and Warnke~\cite{JW}. 

We now informally discuss the high-level structure of the proof, which is similar to~Theorem~\ref{thm:LB:constr}. 
Let~$\mu=\E X$, $\eps=t/\mu$ and $m=(1+\eps)\E |V_p(\cH)|$. Applying (i) as outlined above, using monotonicity we expect that  
\begin{equation*}
\begin{split}
\Pr(X \ge \mu+t) & \;\ge\; \Pr(|V_p(\cH)| \ge m) \cdot \Pr\bigl(X \ge \mu+t \: \big| \: |V_p(\cH)| \ge m\bigl) \\
& \;\ge\; e^{-\Theta(t^2/\Lambda)} \cdot \Pr\bigl(X \ge \mu+t \: \mid \: |V_p(\cH)| = m\bigr)  .
\end{split}
\end{equation*}
Thinking of the uniform random graph $G_{n,m}$, using $\E |V_p(\cH)|=v(\cH) p$ it seems plausible that $\E(X \mid |V_p(\cH)| = m)$ is approximately $e(\cH) \cdot \bigl(m/v(\cH)\bigr)^k = (1+\eps)^k \E X$. 
Similarly, we expect $\Var(X \mid |V_p(\cH)| = m) = O(\Var X)$ for $\eps=O(1)$. 
Noting $t=\eps \E X$ and $(1+\eps)^k > 1+2\eps$, we see that $\E(X \mid |V_p(\cH)| = m)-t$ ought to be roughly at least $(1+\eps)\E X=\mu+t$. % (this shift in the mean is crucial). 
To sum up, for $t \ge \sqrt{\Var X}$ the Paley--Zygmund inequality~\eqref{eq:PZ} should yield 
\begin{equation}\label{eq:heur:PZ}
\begin{split}
\Pr\bigl(X \ge \mu+t \: \big| \: |V_p(\cH)| = m\bigr) & \;\ge\; \Pr\Bigl(X \ge \E\bigl(X \: \big| \: |V_p(\cH)| = m\bigr)-t \ \Big| \ |V_p(\cH)| = m\Bigr) \\
& \;\ge\; \Omega\left(\frac{t^2}{\Var X+t^2}\right)  = \Omega(1) ,
\end{split}
\end{equation}
and the following proof basically makes this rigorous (with some care about border cases). 
\begin{proof}[Proof of Theorem~\ref{thm:LB:vx}]
Let~$\eps=t/\mu$, $N=v(\cH)$, and~$m=(1+\eps)Np$. 
Given $0 \le j \le N$, we henceforth write $\Pr_j(\cdot)=\Pr(\cdot \mid |V_p(\cH)|=j)$ for brevity. We analogously use $\E_j(\cdot)$ and $\Var_j(\cdot)$, respectively. 
Note that, by monotonicity, we have  
\begin{equation}\label{eq:LB:vx:P}
\Pr(X \ge \mu+t) \ge \sum_{j \ge m} \Pr_j(X \ge \mu+t)\Pr(|V_p(\cH)|=j) \ge \Pr_m(X \ge \mu+t) \Pr(|V_p(\cH)| \ge m) .
\end{equation}

It remains to estimate $\Pr_m(X \ge \mu+t)$ and $\Pr(|V_p(\cH)| \ge m)$ from below. 
We start by defining~$\beta=\beta(\alpha,k,a,D) \in (0,1)$ in a somewhat technical way (that will be convenient in border cases). 
We use the convention that all implicit constants may depend on~$k,a,D$ (but not on~$\alpha$). 
In particular, $e(\cH) =\Omega(n^2)$ and $\Delta_2(\cH)=O(1)$ imply $v(\cH) = \Omega(n)$, so that $N=\Theta(n)$. 
Observing that $\Lambda Np/\mu^2 = \Theta(1+(np^{k-1})^{-1})$ holds, we infer  
\begin{equation}\label{eq:LB:vx:Cond}
\eps^2 Np = \Omega(\eps^2\mu^2/\Lambda) \quad \text{ and } \quad \eps^2 Np = O\bigl((1+\alpha^{-(k-1)}) \eps^2\mu^2/\Lambda\bigr) . 
\end{equation}
Furthermore, by assumption and Remark~\ref{rem:Var} we have $\eps \mu =t \ge \min\{\sqrt{\Lambda},\sqrt{\Var X}\} = \Omega(\sqrt{\alpha\Lambda})$, so that 
$\eps^2 Np = \Omega(\alpha)$ by~\eqref{eq:LB:vx:Cond}. 
With $\eps \le \beta$ in mind, we now pick $\beta \in (0,\alpha/4]$ small enough such that 
\begin{equation}\label{eq:LB:vx:Cond:beta}
\eps Np = \eps^2 Np/\eps = \Omega(\alpha\beta^{-1}) \ge 2k^2 \quad \text{ and } \quad Np = \Omega(\alpha\beta^{-2}) \ge 16 \alpha^{-2} .
\end{equation}
Note that $m=(1+\eps) Np \le (1+\alpha)(1-\alpha)N < N$. 
So, since $N=\Theta(n)$ and $|V_p(\cH)| \sim \Bin(N,p)$,
for $n \ge n_0(k,a,D)$ folklore estimates for binomial random variables yield 
\begin{equation}\label{eq:LB:vx:Chernoff}
\Pr(|V_p(\cH)| \ge m) = \Pr(|V_p(\cH)| \ge (1+\eps) Np) \ge d_1\exp\bigl(-c_1 \eps^2Np)\bigr) ,
\end{equation}
where the constants $c_1,d_1>0$ depend only on~$\alpha,k,a,D$. 
(This can, e.g., be deduced analogous to the proof of Theorem~\ref{thm:LB:disj} by means of Stirling's formula. 
One minor difference in the estimates is perhaps that in \eqref{eq:Bin:LB} we can, e.g., via $1-q =1-p \ge \alpha$ and $j \le 4T =4 \max\{\eps Np,\sqrt{Np}\}$ here directly obtain $j^2/\bigl((1-q)N\bigr) = O(\alpha^{-1}\eps^2Np + \alpha^{-1})$, say. 
To be pedantic, by choice of~$\beta$ in~\eqref{eq:LB:vx:Cond:beta} we have also ensured that $M \le Np + 4 T = Np(1+4\max\{\eps,1/\sqrt{Np}\}) \le N (1-\alpha)(1+\alpha) < N$ holds.)

Turning to $\Pr_m(X \ge \mu+t)$, note that $\eps \le \beta \le 1$ implies $\varphi(\eps)=\Theta(\eps^2)$ via \eqref{eq:varphi:x4} and \eqref{eq:varphi:x3}. 
So, in view of~\eqref{eq:LB:vx:Cond}, \eqref{eq:LB:vx:Chernoff} and $\eps=t/\mu$, we see that \eqref{eq:thm:LB:vx} follows if $\Pr_m(X \ge \mu+t) \ge d_2=d_2(\alpha,k,a,D)>0$.  
Define $I_{f}=\indic{f \subseteq V_p(\cH)}$, so that $X=\sum_{f \in \cH}I_f$. 
Let $V_m(\cH) \subseteq V(\cH)$ with $|V_m(\cH)|=m$ be chosen uniformly at random. 
Observe that $V_p(\cH)$ conditioned on $|V_p(\cH)|=m$ has the same distribution as $V_m(\cH)$. 
Using $m =(1+\eps) Np \ge 2k^2$, $|f|=k \ge 2$ and $\E I_f = p^k$ it follows that 
\[
\E_m(I_f) = \binom{N-k}{m-k} \Big/ \binom{N}{m} = \prod_{0 \le i < k} \frac{m-i}{N-i} \ge (1-k/m)^k (m/N)^k  \ge (1-k^2/m) (1+\eps)^2 \E I_f  .
\]
Hence $\E_m(X) \ge (1-k^2/m) (1+\eps)^2 \E X$. 
Furthermore, by \eqref{eq:LB:vx:Cond:beta} we have $m=(1+\eps)Np \ge 2k^2 (1+\eps)\eps^{-1}$, which implies $(1-k^2/m)(1+\eps) \ge 1+\eps/2$. 
So, recalling $t=\eps \mu = \eps \E X$, we obtain  
\begin{equation}\label{eq:LB:vx:E}
\E_m(X) - t/2 \ge (1+\eps/2)(1+\eps)\E X - \eps \E X/2  \ge (1+\eps) \E X = \mu + t.
\end{equation}
Similar standard calculations (see, e.g., the proof of Theorem~15 in~\cite{JW}) show that, say,  
\begin{equation}\label{eq:LB:vx:Var}
\Var_m(X) = \sum_{(e,f) \in \cH \times \cH} \bigl[\E_m(I_eI_f) - \E_m(I_e)  \E_m(I_f) \bigr] \le (1+\eps)^{2k}\sum_{(e,f) \in \cH \times \cH: e \cap f \neq \emptyset} \E(I_eI_f) .
\end{equation}
It is not difficult to see that the final expression of~\eqref{eq:LB:vx:Var} is at most $4^k \cdot O(\Lambda)$, 
so that Remark~\ref{rem:Var} and $1-p \ge \alpha$ imply $\Var_m(X) = O(\alpha^{-1} \min\{\Lambda,\Var X\})$, say. 
Using the assumed lower bounds for~$t$, we now infer $\Var_m(X) =  O(\alpha^{-1}t^2)$. 
Recalling~\eqref{eq:LB:vx:E}, the Paley--Zygmund inequality~\eqref{eq:PZ} implies 
\[
\Pr_m(X \ge \mu + t) \ge \Pr_m(X \ge \E_m(X) - t/2) \ge \frac{(t/2)^2}{\Var_m(X)+(t/2)^2} = \Omega\left(\frac{1}{\alpha^{-1}+1}\right) ,
\]
which, as discussed, completes the proof. 
\end{proof}

\subsection{Proof of the lower bounds of Theorem~\ref{thm:HGp} and~\ref{thm:HG} (and Remark~\ref{rem:HG})}\label{sec:HGproofLB}
In this section we combine the previous estimates, and prove the 
lower bounds of Theorem~\ref{thm:HGp} and~\ref{thm:HG} (as well as Remark~\ref{rem:HG}). 
This is in principle straightforward but, at least as written here, requires several case distinctions (that are not very illuminating). 
Some complications are due to the fact that the results of Sections~\ref{sec:LB:constr}--\ref{sec:LB:vx} are only valid in some range of the parameters (they need to be merged seamlessly), whereas others stem from the fact that  our estimates are uniform (e.g.,~our~$n_0$ does \emph{not} depend on~$\eps$ or~$\gamma$), from the fact that our assumptions are very weak (e.g., $p>0$ instead of $p \ge n^{-2/k}$), or from the fact that the exponents are more involved than usual (e.g., \eqref{eq:thm:HG:LB} yields up to five different asymptotic expressions). %, depending on the~$t$ and~$p$ under consideration).  
\begin{proof}[Proof of \eqref{eq:thm:HGp:LB} of Theorem~\ref{thm:HGp}]
The case $\sqrt{\mu}\log(1/p) \le \mu$ is easy: then Theorem~\ref{thm:LB:constrp} implies  
\begin{equation}\label{eq:thm:proofLBp:1}
\Pr(X \ge (1+\eps)\mu) \ge \exp\bigl(-2D\max\{1,\sqrt{\eps}\} \sqrt{\mu} \log(1/p)\bigr) . 
\end{equation}
In the remainder we may thus assume $\sqrt{\mu}\log(1/p) \ge \mu$, which for $n \ge n_0(k,a)$ implies $p \le n^{-2/(k+1/3)}$, with room to spare. 
If $\eps \mu \le \max\{\mu,n^{1/(2k)}\}$, then Theorem~\ref{thm:LB:disj} and $1 \le 2 \max\{\mu,\eps\mu\}$ (as $(1+\eps)\mu \ge 1$) yield 
\begin{equation}\label{eq:thm:proofLBp:2}
\Pr(X \ge (1+\eps)\mu) \ge \exp\bigl(-\log(1/d)-c\eps^2\mu\bigr) \ge \exp\bigl(-2 \max\{2\log(1/d),c\}\max\{1,\eps^2\}\mu\bigr) . 
\end{equation}
It remains to consider the case $\eps \mu \ge \max\{\mu,n^{1/(2k)}\}$. 
Since $p \log(1/p) \le 1$ analogous to~\eqref{eq:ps}, using $\mu \le D^3 n^2 p^k$ and $p \le n^{-2/(k+1/3)}$ it follows for $n \ge n_0(k,D)$ that   
\begin{equation*}\label{eq:cd}
\sqrt{\mu}\log(1/p) \le \indic{p \le n^{-4/(k-2)}}D^{3/2}n p^{(k-2)/2} \cdot p \log(1/p)+ \indic{p \ge n^{-4/(k-2)}}4D^{3/2} n p^{k/2} \log(n) \le n^{1/(2k)} \le \eps \mu .
\end{equation*}
Since $\sqrt{1+\eps} \le 2\eps$ (as $\eps \mu \ge \mu$ implies $\eps \ge 1$), now Theorem~\ref{thm:LB:constrp} gives  
\begin{equation}\label{eq:thm:proofLBp:3}
\Pr(X \ge (1+\eps)\mu) \ge \exp\bigl(-D \sqrt{(1+\eps)\mu} \log(1/p)\bigr) \ge \exp\bigl(-2D \eps^2 \mu \bigr) .
\end{equation}
To sum up, \eqref{eq:thm:proofLBp:1}--\eqref{eq:thm:proofLBp:3} readily establish the lower bound~\eqref{eq:thm:HGp:LB}, completing the proof.  
\end{proof}
\begin{proof}[Proof of \eqref{eq:thm:HG:LB} of Theorem~\ref{thm:HG} and Remark~\ref{rem:HG}]
Note that we may assume $\gamma \le 1/2$ (since decreasing $\gamma$ yields less restrictive assumptions). 
We use the convention that all implicit constants may depend on $k,a,D$ (not on~$\gamma$), and tacitly assume $n \ge n_0(k,a,D)$ whenever necessary. 
With foresight, we start with some technical but useful auxiliary estimates. 
Recalling~\eqref{eq:varphi:x4}, for $t=\beta \mu$ we have $\varphi(t/\mu)\mu^2 \ge \min\{\beta,\beta^2\}\mu^2/3$. 
Since $\mu = \Theta(n^2p^k)$ and $\Lambda = \mu(1+np^{k-1})$, it follows for $t=\beta \mu$ that 
\begin{equation}\label{eq:ie}
\begin{split}
& \frac{\varphi(t/\mu)\mu^2}{\sqrt{t}\log(1/p)\Lambda} \ge  \frac{\min\bigl\{\beta^{1/2},\beta^{3/2}\bigr\} \mu^{1/2}}{3(1+np^{k-1}) \log(1/p)}\\
& \quad = \min\bigl\{\beta^{1/2},\beta^{3/2}\bigr\}\left(\indic{p < n^{-1/(k-1)}} \frac{\Omega(np^{k/2})}{\log(1/p)}+\indic{p \ge n^{-1/(k-1)}}\frac{\Omega(1)}{p^{k/2-1}\log(1/p)}\right) .
\end{split}
\end{equation}
Analogously to~\eqref{eq:ps}, calculus yields $p^{1/2}\log(1/p) \in (0,2]$ for $p \in (0,1)$.
Since $k \ge 3$ entails $p^{k/2-1} \le p^{1/2}$, we see that $\gamma n^{-2/k}(\log n)^{2/k} \le p \le 1-\gamma$ and $t \ge \mu$ imply $C_1 \sqrt{t}\log(1/p) \le \varphi(t/\mu)\mu^2/\Lambda$, where $C_1=C_1(\gamma,k,a,D)>0$. 
Replacing $\log(1/p)$ with $\log(e/p)$ in~\eqref{eq:ie}, we similarly see that $C_2 \sqrt{t}\log(e/p) \le \varphi(t/\mu)\mu^2/\Lambda$ for all $\gamma n^{-2/k}(\log n)^{2/k} \le p \le 1$ and $t \ge \mu$, where $C_2=C_2(\gamma,k,a,D)>0$. 
Since \eqref{eq:varphi:x4} and \eqref{eq:varphi:x3} imply $\varphi(t/\mu)\mu^2 = \Theta(t^2)$ for $t \le \mu$, this completes the proof of Remark~\ref{rem:HG} (by adjusting the constants $n_0,c,C$). 

We turn to~\eqref{eq:thm:HG:LB} of Theorem~\ref{thm:HG}, and start with case~(iii), where $\gamma n^{-1/(k-1)} \le p \le 1-\gamma$. Applying Theorem~\ref{thm:LB:constr} and~\ref{thm:LB:vx} (with $\alpha = \gamma$) there is $\beta=\beta(\gamma,k,a,D)>0$ such that 
\begin{equation}\label{eq:thm:proofLB:case4}
\Pr(X \ge \mu+t) \ge \max\left\{ \exp\Bigl(-c_1 \sqrt{t}\log(1/p)\Bigr), \: \indic{t \le \beta \mu}\exp\Bigl(-c_2 \varphi(t/\mu)\mu^2/\Lambda\Bigl) \right\} .
\end{equation}
Proceeding as in the discussion following \eqref{eq:ie}, for $t \ge \beta \mu$ we infer $A \sqrt{t}\log(1/p) \le \varphi(t/\mu)\mu^2/\Lambda$, where $A=A(\beta,\gamma,a,k,D)>0$. 
Replacing $c_2$ by $c_3=\max\{c_2,c_1/A\}$ we thus can remove the indicator $\indic{t \le \beta \mu}$ in~\eqref{eq:thm:proofLB:case4}, establishing~\eqref{eq:thm:HG:LB}.  

Next we consider case~(ii) in the range $n^{-1/(k-1)} \le p \le n^{-1/(k-1)} \log n$. 
As in~\eqref{eq:Var:b}, by Remark~\ref{rem:Var} we have $\Var X \ge  b \Lambda \ge b \mu$, where $b=b(k,a,D) \in (0,1]$. 
Since $\Lambda = O(\mu (\log n)^{k-1})$ and $\mu= \Omega(n^{(k-2)/(k-1)})$, 
it is easy to see that $t \ge b^{2/3}\mu^{2/3}(\log n)^{2/3} \ge \sqrt{\Lambda}$ holds. 
Hence, by case~(iii) above there is nothing to show. 

We now turn to case~(i), where $p \le n^{-2/(k+1/3)}$. 
If $\sqrt{t} \log(1/p) \le \varphi(t/\mu) \mu^2/\Lambda$ holds, then using $\varphi(t/\mu) \mu^2 \le t^2$, see~\eqref{eq:varphi:x3}, and $\Lambda=\Theta(\mu)$ we infer $t \ge \Lambda^{2/3} (\log(1/p))^{2/3} \ge \indic{\mu \ge 1/2}\sqrt{\Lambda}$, so Theorem~\ref{thm:LB:constr} applies.  
Noting $\mu^2/\Lambda = \Theta(\mu)$, it thus remains to show that Theorem~\ref{thm:LB:disj} applies when $\varphi(t/\mu) \mu^2/\Lambda \le \sqrt{t} \log(1/p)$. 
Aiming at a contradiction, we now assume that $t \ge 8\max\{\mu,n^{1/(2k)}\}$.  
Noting that $\varphi(x)= (1+x)\log(1+x)-x \ge x (\log x) /2$ for $x \ge e^2 \approx 7.4$, using $\Lambda = \Theta(\mu)$ we infer
\begin{equation}\label{eq:thm:proofLB:case2:smallp}
1 \ge \frac{\varphi(t/\mu) \mu^2}{\sqrt{t} \log(1/p) \Lambda} \ge \frac{t^{1/2} \mu \log(t/\mu)}{2\log (1/p)\Lambda} = n^{1/(4k)} \cdot \Omega\biggl(\frac{\log(t/\mu)}{\log(1/p)}\biggr) .
\end{equation}
We now argue that the right hand side of~\eqref{eq:thm:proofLB:case2:smallp} is~$\omega(1)$. 
Observe that $p \le n^{-2/(k-1)}$ implies $t/\mu \ge \Omega(n^{1/(2k)}/(n^2p^k)\bigr) = \omega(p^{-1})$, and that $p \ge n^{-2/(k-1)}$ implies 
$\log(t/\mu)/\log(1/p) \ge \Omega((\log n)^{-1})$. 
In both cases we readily obtain a contradiction in \eqref{eq:thm:proofLB:case2:smallp} for large~$n$, which by our above discussion establishes~\eqref{eq:thm:HG:LB}. 

Finally, by case~(i) above it remains to verify case~(ii) in the range $n^{-2/(k+1/3)} \le p \le n^{-1/(k-1)}$. 
Note that $\Lambda = \Theta(\mu)$, $\Var X \ge  b \Lambda \ge b \mu$, and $\mu = \Omega(n^{2/(k+1)})$ imply $t \ge b^{2/3}\mu^{2/3}(\log n)^{2/3} \ge \sqrt{\Lambda}$ and $\mu+t \ge 1$, with room to spare.  
In case of $t \le \mu$, by \eqref{eq:varphi:x4} we have $\varphi(t/\mu) \mu^2 \ge t^2/3$, so that $\Lambda = \Theta(\mu)$ yields 
\[
\frac{\varphi(t/\mu)\mu^2}{\sqrt{t} \log(1/p) \Lambda} \ge \frac{t^{3/2}}{3\log(1/p) \Lambda} \ge \frac{b \mu \log n}{3\log(1/p) \Lambda}= \Omega(1) . 
\]
Using the discussion after \eqref{eq:ie} in case of~$t \ge \mu$, it thus follows (in both cases) that $B \sqrt{t} \log(1/p) \le \varphi(t/\mu)\mu^2/\Lambda$, where $B=B(b,\gamma,k,a,D)>0$. Hence an application of Theorem~\ref{thm:LB:constr} establishes~\eqref{eq:thm:HG:LB}. 
\end{proof}

\bigskip{\bf Acknowledgements.} 
I would like to thank Oliver Riordan and Matas {{\v{S}}ileikis for many useful remarks on an earlier version of this paper, and Svante Janson for a helpful discussion.  
I am also grateful to the referee for an exceptionally careful reading, and for numerous constructive suggestions concerning the presentation.

\small
\begin{spacing}{0.4}
\bibliographystyle{plain}

\end{spacing}
\normalsize

\begin{appendix}
\section{Appendix}\label{apx}
The following proof is based on Stirling's approximation formula $1 \le x!/[\sqrt{2\pi x}(x/e)^x ] \le e^{1/(12x)}$. 
Some of the minor complications below stem from the fact that our assumption $\mu+t \ge 1$ is extremely weak. % (of course, under stronger assumptions it is easy to sharpen the exponent further). 
\begin{proof}[Proof of Theorem~\ref{thm:LB:disj}]
With foresight, let $T=\max\{t,\sqrt{\mu}\}$, $L=\ceil{\mu+T}$ and $M=\ceil{\mu+2T}$. Clearly,   
\begin{equation}\label{eq:thm:LB:disj:0}
\Pr(X \ge \mu+t) \ge \Pr(X \ge \mu + T) \ge \sum_{m \in \NN: L \le m \le M}\Pr(X=m) . 
\end{equation}
In view of Lemma~\ref{lem:LB:disj}, we now estimate the right hand side of \eqref{eq:lem:LB:disj}. 
To avoid clutter, let~$N=e(\cH)$ and~$q=p^k$. 
Recalling $1-x \le e^{-x}$, $\mu=Nq>0$ and Stirling's formula, standard (somewhat tedious but simple) calculations show that for any $\mu+j \in \NN$ satisfying $1 \le \mu+j < N$ we have, say, 
\begin{equation}\label{eq:Bin:LB}
\begin{split}
\binom{N}{\mu+j}q^{\mu+j}(1-q)^{N-\mu-j} 
& \ge \frac{\exp\left(-\frac{1}{12(\mu+j)}-\frac{1}{12(N-\mu-j)}\right)}{\sqrt{2\pi (\mu+j)(1-q-\frac{j}{N}})\left(1+\frac{j}{\mu}\right)^{\mu+j}\left(1-\frac{j}{N-\mu}\right)^{N-\mu-j}} \\
& \ge \frac{\exp\left(-\frac{1}{6}-\bigl((\mu+j)\log(1+j/\mu)-j\bigr)-\frac{j^2}{(1-q)N}\right)}{\sqrt{2\pi (\mu+j)}} .
\end{split}
\end{equation}
%(The second inequality might seem intimidating, but is standard; see also equation~(2.4) and the proof of Theorem~2.1 in~\cite{JLR}.)
Note that $(\mu+j)\log(1+j/\mu)-j=\varphi(j/\mu)\mu$, and that $\varphi(j/\mu)$ is monotone increasing in $j \ge 0$. 
Since $\mu+t \ge 1$ implies $T \ge 1/2$, we deduce $M - \mu \le 2T+1 \le 4T$. 
Since $N = e(\cH) \ge an^2$, from the proof of Lemma~\ref{lem:LB:disj} it follows that $M \le \mu + 4T=O(n^{\beta})$ satisfies $M^2/N = o(1)$ and $M < N$. 
In particular, $q =p^k\le 1/2$ implies $j^2/\bigl((1-q)N\bigr) \le 2 M^2/N = o(1)$. 
By combining \eqref{eq:thm:LB:disj:0} with Lemma~\ref{lem:LB:disj} and \eqref{eq:Bin:LB}, we now infer that, say,  
\[
\Pr(X \ge \mu+t) \ge \floor{\max\{T,1\}} \cdot \frac{\exp\Bigl(-(b+1)- \varphi(4T/\mu)\mu\Bigr)}{\sqrt{2\pi M}} .
\]
Noting that $\max\{T,1\} = \max\{t,\sqrt{\mu},1\}$ and $M \le 4 \max\{t,\mu,1\}$, we deduce $\max\{T,1\}/\sqrt{M} \ge 1/\sqrt{4}$.  
Next we estimate $\varphi(4T/\mu)\mu$. 
If $T=\sqrt{\mu}$ holds, then $\varphi(4T/\mu)\mu \le 16 T^2/\mu = 16$ by \eqref{eq:varphi:x3}, and if $T=t$ holds, then $\varphi(4T/\mu)\mu \le 16 \varphi(t/\mu)\mu$ by applying~\eqref{eq:varphi:x2} twice. 
Combining our findings, it follows that, say, 
\[
\Pr(X \ge \mu+t) \ge e^{-(b+17)} / \sqrt{32\pi} \cdot \exp\Bigl(-\indic{t > \sqrt{\mu}}16 \varphi(t/\mu)\mu\Bigr) ,
\]
which together with \eqref{eq:varphi:x3} readily establishes \eqref{eq:thm:LB:disj} with $c=16$ and $d=e^{-(b+17)} / \sqrt{32\pi}$.  
\end{proof}
\end{appendix}
\end{document}